\def\qkpierisz{10pt}
\def\newthm#1#2{\newtheorem{#1}[dummy]{#2}%
  \expandafter\def\csname#2\endcsname##1{\hyperref[#1:##1]{#2~\ref*{#1:##1}}}}
\theoremstyle{definition}
\newcommand{\Section}[1]{\hyperref[sec:#1]{Section~\ref*{sec:#1}}}
\newcommand{\Sec}[1]{\hyperref[sec:#1]{\S\ref*{sec:#1}}}
\newcommand{\Table}[1]{\hyperref[tab:#1]{Table~\ref*{tab:#1}}}
\newcommand{\Figure}[1]{\hyperref[fig:#1]{Figure~\ref*{fig:#1}}}
\newcommand{\eqn}[1]{\hyperref[eqn:#1]{(\ref*{eqn:#1})}}
\newcounter{symbolpage}
\def\spage#1{\ifnum\value{symbolpage}=\getpagerefnumber{#1}\else\hyperref[#1]{\textcolor{blue}{[\pageref*{#1}]}}\, \fi\setcounter{symbolpage}{\getpagerefnumber{#1}}}
\newif\ifsnext
\def\scomma{\ifsnext,\, \fi\snexttrue}
\newcommand{\targetsec}[2]{\hypertarget{link:#1}{\phantomsection\label{page:#1}#2}}
\newcommand{\indexsec}[2]{\snextfalse\noin\Section{#1}: {\hypersetup{hidelinks}#2}.}
\newcommand{\slink}[2]{\scomma\spage{page:#1}\hyperlink{link:#1}{#2}}
\newcommand{\sref}[2]{\scomma\spage{#1}\hyperref[#1]{#2}}
\DeclareMathOperator{\Sp}{Sp}
\DeclareMathOperator{\Gr}{Gr}
\DeclareMathOperator{\Fl}{Fl}
\DeclareMathOperator{\LG}{LG}
\DeclareMathOperator{\SG}{SG}
\DeclareMathOperator{\SF}{SF}
\DeclareMathOperator{\OG}{OG}
\DeclareMathOperator{\Span}{Span}
\DeclareMathOperator{\QH}{QH}
\DeclareMathOperator{\QK}{QK}
\DeclareMathOperator{\codim}{codim}
\DeclareMathOperator{\dist}{dist}
\newcommand{\comin}{\mathrm{comin}}
\newcommand{\ssm}{\smallsetminus}
\newcommand{\bP}{{\mathbb P}}
\newcommand{\C}{{\mathbb C}}
\newcommand{\Z}{{\mathbb Z}}
\newcommand{\N}{{\mathbb N}}
\newcommand{\cA}{{\mathcal A}}
\newcommand{\cB}{{\mathcal B}}
\newcommand{\cC}{{\mathcal C}}
\newcommand{\cF}{{\mathcal F}}
\newcommand{\cH}{{\mathcal H}}
\newcommand{\cI}{{\mathcal I}}
\newcommand{\cN}{{\mathcal N}}
\newcommand{\cO}{{\mathcal O}}
\newcommand{\cP}{{\mathcal P}}
\newcommand\oY{\accentset{\circ}{Y}}
\newcommand{\euler}[1]{\chi_{_{#1}}}
\newcommand{\pt}{\mathrm{point}}
\newcommand{\al}{{\alpha}}
\newcommand{\be}{{\beta}}
\newcommand{\ga}{{\gamma}}
\newcommand{\ka}{{\kappa}}
\newcommand{\la}{{\lambda}}
\newcommand{\om}{{\omega}}
\newcommand{\dmin}{d_{\min}}
\newcommand{\dmax}{d_{\max}}
\newcommand{\ev}{\operatorname{ev}}
\newcommand{\wh}{\widehat}
\newcommand{\wb}{\overline}
\newcommand{\ov}{\overline}
\newcommand{\pic}[2]{\includegraphics[scale=#1]{#2}}
\newcommand{\ignore}[1]{}
\newcommand{\Mb}{\wb{\mathcal M}}
\newcommand{\noin}{\noindent}
\newcommand{\sP}{{\tt P}}
\newcommand{\sQ}{{\tt Q}}
\newcommand{\pP}{{\scriptstyle\mathbf P}}
\begin{document}

\title{Seidel and Pieri products in cominuscule quantum $K$-theory}

\ifdefined\gitdate
\date{\gitdate\ revision {\tt \gittag}}
\else
\date{March 21, 2026}
\fi

\author{Anders~S.~Buch}
\address{Department of Mathematics, Rutgers University, 110
  Frelinghuysen Road, Piscataway, NJ 08854, USA}
\email{asbuch@math.rutgers.edu}

\author{Pierre--Emmanuel Chaput}
\address{Domaine Scientifique Victor Grignard, 239, Boulevard des
  Aiguillettes, Universit{\'e} de Lorraine, B.P.  70239,
  F-54506 Vandoeuvre-l{\`e}s-Nancy Cedex, France}
\email{pierre-emmanuel.chaput@univ-lorraine.fr}

\author{Nicolas Perrin}
\address{Centre de Math\'ematiques Laurent Schwartz (CMLS), CNRS, \'Ecole
polytechnique, Institut Polytechnique de Paris, 91120 Palaiseau, France}
\email{nicolas.perrin.cmls@polytechnique.edu}

\subjclass[2020]{Primary 14N35; Secondary 19E08, 14N15, 14M15, 14E08}

\keywords{Quantum $K$-theory, Gromov-Witten invariants, cominuscule flag
varieties, Seidel representation, Pieri formulas, symplectic Grassmannians,
Richardson varieties}

\thanks{Buch was partially supported by NSF grants DMS-1205351, DMS-1503662,
DMS-2152316, a Visiting Professorship at Universit{\'e} de Lorraine, and NSF
grant DMS-1929284 while in residence at the Institute for Computational and
Experimental Research in Mathematics in Providence, RI, during the Spring of
2021. Perrin was partially supported by ANR project FanoHK, grant
ANR-20-CE40-0023.}

\begin{abstract}
  We prove a collection of formulas for products of Schubert classes in the
  quantum $K$-theory ring $\QK(X)$ of a cominuscule flag variety $X$. This
  includes a $K$-theory version of the Seidel representation, stating that the
  quantum product of a Seidel class with an arbitrary Schubert class is equal to
  a single Schubert class times a power of the deformation parameter $q$. We
  also prove new Pieri formulas for the quantum $K$-theory of maximal orthogonal
  Grassmannians and Lagrangian Grassmannians, and give a new proof of the known
  Pieri formula for the quantum $K$-theory of Grassmannians of type A. Our
  formulas have simple statements in terms of quantum shapes that represent the
  natural basis elements $q^d[\cO_{X^u}]$ of $\QK(X)$. Along the way we give a
  simple formula for $K$-theoretic Gromov-Witten invariants of Pieri type for
  Lagrangian Grassmannians, and prove a rationality result for the points in a
  Richardson variety in a symplectic Grassmannian that are perpendicular to a
  point in projective space.
\end{abstract}

\maketitle

\setcounter{tocdepth}{1}
\tableofcontents


\section{Introduction}

In this paper we prove a collection of explicit formulas for products of
Schubert classes in the quantum $K$-theory ring $\QK(X)$ of a cominuscule flag
variety. These formulas include a $K$-theory version of the Seidel
representation of the quantum cohomology ring $\QH(X)$ \cite{seidel:1,
belkale:transformation, chaput.manivel.ea:affine}, as well as Pieri formulas for
products with special Schubert classes of classical Grassmannians that
generalize earlier Pieri formulas in quantum cohomology \cite{bertram:quantum,
kresch.tamvakis:quantum, kresch.tamvakis:quantum*1} and in $K$-theory
\cite{lenart:combinatorial, buch.ravikumar:pieri}. The Pieri formula for
$\QK(X)$ is known from \cite{buch.mihalcea:quantum} when $X$ is a Grassmannian
of type A, but is new for maximal orthogonal Grassmannians and Lagrangian
Grassmannians. Our formulas have simple expressions in terms of \emph{quantum
shapes} that encode the natural basis elements $q^d\cO^u = q^d[\cO_{X^u}]$ of
$\QK(X)$, generalizing the familiar identification of cominuscule Schubert
classes with diagrams of boxes \cite{proctor:bruhat}.

Let $X = G/P_X$ be a flag variety defined by a complex semisimple linear
algebraic group $G$ and a parabolic subgroup $P_X$. Let $\Phi$ be the root
system of $G$, $W$ the Weyl group, and let $B$ be a Borel subgroup contained in
$P_X$. A simple root $\ga$ is called \emph{cominuscule} if, when the highest
root is expressed in the basis of simple roots, the coefficient of $\ga$ is one.
The flag variety $X$ is called \emph{cominuscule} if $P_X$ is a maximal
parabolic subgroup defined by a cominuscule simple root. Let $w_0^X \in W$ be
the minimal representative of the longest element $w_0$ modulo the Weyl group
$W_X$ of $P_X$. The minimal representatives $w_0^F$ defined by all cominuscule
flag varieties of $G$, together with the identity, form a subgroup of the Weyl
group:
\[
  W^\comin \,=\,
  \{ w_0^F \mid \text{$F = G/P_F$ is cominuscule} \} \cup \{1\} \,\leq\, W \,.
\]

Each element $u \in W$ defines the Schubert varieties $X_u = \ov{B u.P_X}$ and
$X^u = \ov{B^- u.P_X}$ in $X$. The Schubert classes $[X^w]$ for $w \in W^\comin$
will be called \emph{Seidel classes}. It was proved in
\cite{belkale:transformation} and also in \cite{chaput.manivel.ea:affine} that
quantum cohomology products with Seidel classes have only one term. More
precisely, for $w \in W^\comin$ and $u \in W$ we have $[X^w] \star [X^u] =
q^{\omega^\vee-u^{-1}.\omega^\vee} [X^{w u}]$ in $\QH(X)$, where $\omega^\vee$
is the unique fundamental coweight such that $w.\omega^\vee = w_0.\omega^\vee$.
This defines a representation of $W^\comin$ on $\QH(X)/\langle q-1\rangle$
called the \emph{Seidel representation}. Our first result generalizes the Seidel
representation to the quantum $K$-theory ring when $X$ is itself cominuscule. We
denote the Schubert classes in $K(X)$ by $\cO_u = [\cO_{X_u}]$ and $\cO^u =
[\cO_{X^u}]$.

\begin{thm}[Seidel representation]\label{thm:seidel}%
  Let $X = G/P_X$ be a cominuscule flag variety, and let $w \in W^\comin$ and $u
  \in W$. We have in $\QK(X)$ that
  \[
    \cO^w \star \cO^u \,=\, q^d\,\cO^{w u} \,,
  \]
  where $d$ is determined by\, $\int_d c_1(T_X) + \codim(X^{w u}) = \codim(X^w)
  + \codim(X^u)$.
\end{thm}

This result has been generalized to the equivariant quantum $K$-theory of
cominuscule flag varieties in \cite{buch.chaput.ea:equivariant}, where we also
conjecture a further generalization to arbitrary flag varieties. The action of
$W^\comin$ on $\QH(X)/\langle q-1 \rangle$ was named the \emph{Seidel
representation} in \cite{chaput.manivel.ea:affine}, due to the analogy with
Seidel's work on the quantum cohomology of symplectic manifolds \cite{seidel:1}.
It is interesting to ask if the $K$-theoretic version also has a symplectic
analogue. A version of quantum $K$-theory for symplectic manifolds has been
constructed in \cite{abouzaid.mclean.ea:gromov-witten}.

When $X = G/P_X$ is a cominuscule flag variety, the subset $W^X \subset W$ of
minimal representatives of the cosets in $W/W_X$ can be represented by
generalized Young diagrams \cite{proctor:bruhat, perrin:small*1,
buch.samuel:k-theory}. Set $\cP_X = \{ \al \in \Phi \mid \al \geq \ga \}$, where
$\ga$ is the cominuscule simple root defining $X$, and give $\cP_X$ the partial
order $\al' \leq \al$ if and only if $\al-\al'$ is a sum of positive roots. The
inversion set $I(u) = \{ \al \in \Phi^+ \mid u.\al < 0 \}$ of any element $u \in
W^X$ is a lower order ideal in $\cP_X$. The set $\cP_X$ can be identified with a
set of boxes in the plane, which in turn identifies $I(u)$ with a diagram of
boxes that we call the \emph{shape} of $u$. This defines a bijection between the
set of shapes in $\cP_X$ and the Schubert basis $\{[X^u]\}$ of $H^*(X,\Z)$.

More generally, let $\cB = \{ q^d[X^u] \mid u \in W^X, d \in \Z \}$ be the
natural $\Z$-basis of $\QH(X)_q = \QH(X) \otimes \Z[q,q^{-1}]$. It was shown in
\cite{buch.chaput.ea:positivity} that $\cB$ has a natural partial order defined
by $q^e[X^v] \leq q^d[X^u]$ if and only if $X_u$ and $X^v$ can be connected by a
rational curve of degree at most $d-e$. Moreover, this partial order is a
distributive lattice when $X$ is cominuscule. Let $\wh\cP_X \subset \cB$ be the
subset of join-irreducible elements. Then $\wh\cP_X$ is an infinite partially
ordered set that contains $\cP_X$ as an interval. When $X = \Gr(m,n)$ is a
Grassmannian of type A, $\wh\cP_X = \Z^2/\Z(m,m-n)$ is Postnikov's
\emph{cylinder} from \cite{postnikov:affine}. This poset was also defined in
\cite{hagiwara:minuscule*2}. The posets $\wh\cP_X$ defined by other cominuscule
flag varieties are isomorphic to certain \emph{full heaps} of affine Dynkin
diagrams that were constructed in \cite{green:combinatorics*1} and used to study
minuscule representations.

Define a \emph{quantum shape} to be any (non-empty, proper, lower) order ideal
$\la \subset \wh\cP_X$. A quantum shape will also be called a \emph{shape} when
it cannot be misunderstood to be a classical shape in $\cP_X$. The assignment
\[
  I(q^d[X^u]) \,=\, \{ \wh\al \in \wh\cP_X \mid \wh\al \leq q^d[X^u] \}
\]
defines an order isomorphism from $\cB$ to the set of shapes in $\wh\cP_X$,
where shapes are ordered by inclusion. We write $\cO^\la = q^d \cO^u$ when $\la
= I(q^d[X^u])$ is the quantum shape of $q^d[X^u]$.

Quantum multiplication by any Seidel class $\sigma$ defines an order
automorphism of $\cB$, which restricts to an order automorphism of $\wh\cP_X$.
If $\la \subset \wh\cP_X$ is any quantum shape, then $\sigma \star \la = \{
\sigma \star \wh\al \mid \wh\al \in \la \}$ defines a new quantum shape such
that
\[
  \sigma \star \cO^\la \,=\, \cO^{\sigma \star \la} \,.
\]
Here we have abused notation and identified $\sigma$ with the corresponding
$K$-theory class $\cO^{I(\sigma)} \in \QK(X)$. The poset $\wh\cP_X$ can be
identified with an infinite set of boxes in the plane, such that each
automorphism defined by a Seidel class is represented by a translation of the
plane, possibly combined with a reflection. This gives a simple description of
products with Seidel classes in terms of quantum shapes, see \Section{qposet},
\Example{seidel-trans}, \Example{gr:shift}, \Example{og:shift}, and
\Example{lg:shift}.

Let $X = G/P_X$ be a cominuscule classical Grassmannian, that is, a Grassmannian
$\Gr(m,n)$ of type A, a maximal orthogonal Grassmannian $\OG(n,2n)$, or a
Lagrangian Grassmannian $\LG(n,2n)$. The Chern classes of the tautological
vector bundles over $X$ are represented by the \emph{special} Schubert varieties
$X^p \subset X$, with $p \in \N$. Formulas for products with the special
Schubert classes $[X^p]$ are known as \emph{Pieri formulas}. Our Pieri formula
for $\QK(X)$ takes the form
\[
  \cO^p \star \cO^\la \,=\, \sum_\nu c(\nu/\la,p)\, \cO^\nu \,,
\]
where the sum is over all quantum shapes $\nu$ containing $\la$. The coefficient
$c(\nu/\la,p)$ depends on $p$ as well as the \emph{skew shape} $\nu/\la := \nu
\ssm \la \subset \wh\cP_X$. For Grassmannians of type A and maximal orthogonal
Grassmannians, these coefficients $c(\nu/\la,p)$ are identical to those
appearing in the Pieri formulas for the ordinary $K$-theory ring. These
coefficients are signed binomial coefficients in type A
\cite{lenart:combinatorial}, and are signed counts of KOG-tableaux of shape
$\nu/\la$ for maximal orthogonal Grassmannians \cite{buch.ravikumar:pieri}. In
fact, in these cases the Pieri formula for $\QK(X)$ is an easy consequence of
\Theorem{seidel}, the Pieri formula for $K(X)$, and a bound on the $q$-degrees
in cominuscule quantum products proved in \cite{buch.chaput.ea:positivity}.

Assume from now on that $X = \LG(n,2n)$ is a Lagrangian Grassmannian. In this
case our Pieri formula for $\QK(X)$ is more difficult to state and prove. While
the coefficients of the Pieri formula for $K(X)$ are expressed as signed counts
of KLG-tableaux in \cite{buch.ravikumar:pieri}, we need to amend the definition
of KLG-tableau with additional conditions in the quantum case. The tableaux
satisfying these conditions will be called \emph{QKLG-tableaux}. Another
difference is that the Lagrangian Grassmannian $X$ does not have enough Seidel
classes to translate the Pieri formula for $K(X)$ to one for $\QK(X)$. We must
therefore prove our quantum Pieri formula `from scratch', starting with a
geometric computation of the relevant $K$-theoretic Gromov-Witten invariants,
and then use combinatorics to translate these Gromov-Witten invariants to the
structure constants $c(\nu/\la,p)$ of Pieri products. While both parts resemble
the proof of the Pieri formula from \cite{buch.ravikumar:pieri}, the technical
challenges are harder for several reasons, and many steps rely on results proved
in \cite{buch.chaput.ea:positivity}.

Our computation of Gromov-Witten invariants of $X=\LG(n,2n)$ targets those of
the form
\[
  I_d(\cO^p, \cO^v, \cO_u) \,=\, \chi(\ev_1^*(\cO^p) \cdot \ev_2^*(\cO^v) \cdot
  \ev_3^*(\cO_u)) \,,
\]
where $\ev_1, \ev_2, \ev_3 : \Mb_{0,3}(X,d) \to X$ are the evaluation maps from
the Kontsevich moduli space. By \cite{buch.chaput.ea:projected}, these can be
computed as
\[
  I_d(\cO^p, \cO^v, \cO_u) \,=\,
  \euler{X}([\cO_{\Gamma_d(X_u,X^v)}] \cdot \cO^p) \,,
\]
where the \emph{curve neighborhood} $\Gamma_d(X_u,X^v) \subset X$ is defined as
the union of all stable curves of degree $d$ connecting $X_u$ and $X^v$. Let
$\wh X = \SF(1,n;2n)$ be the variety of two-step isotropic flags in the
symplectic vector space $\C^{2n}$, and let $\pi : \wh X \to X$ and $\eta : \wh X
\to \bP^{2n-1}$ be the projections. We then have $\cO^p = \pi_* \eta^*([\cO_L])$
for any linear subspace $L \subset \bP^{2n-1}$ of dimension $n-p$. The
projection formula therefore gives
\[
  I_d(\cO^p, \cO^v, \cO_u) \,=\,
  \euler{\bP^{2n-1}}(\eta_*\pi^*[\cO_{\Gamma_d(X_u,X^v)}] \cdot [\cO_L]) \,.
\]

We compute the right hand side by showing that the restricted map
\begin{equation}\label{eqn:intro:psi}%
  \eta : \pi^{-1}(\Gamma_d(X_u,X^v)) \,\to\, \eta(\pi^{-1}(\Gamma_d(X_u,X^v)))
\end{equation}
is cohomologically trivial, and that its image is a complete intersection in
$\bP^{2n-1}$ defined by explicitly determined equations. More precisely, define
the skew shape $\theta = I(q^d[X^u]) / I([X^v])$ in $\wh\cP_X$, let $N(\theta)$
be the number of components of $\theta$ that are disjoint from the two diagonals
in $\wh\cP_X$ (\Section{lagrange}), and let $R(\theta)$ be the size of a maximal
rim contained in $\theta$. Assuming that $R(\theta) \leq n$, we show that
$\eta(\pi^{-1}(\Gamma_d(X_u,X^v)))$ is a complete intersection in $\bP^{2n-1}$
defined by $N(\theta)$ quadratic equations and $n-R(\theta)-N(\theta)$ linear
equations. This gives the formula
\begin{equation}\label{eqn:intro:pierigw}%
  I_d(\cO^p, \cO^v, \cO_u) \,=\,
  \chi(\cO_{L \,\cap\, \eta(\pi^{-1}(\Gamma_d(X_u,X^v)))}) \,=\,
  \sum_{j=0}^{R(\theta)-p} (-1)^j\, 2^{N(\theta)-j}
  \textstyle{\binom{N(\theta)}{j}} \,.
\end{equation}
An analogue of this identity for maximal orthogonal Grassmannians has been
obtained with similar methods in {\c T}arigradschi's thesis
\cite{tarigradschi:curve}.

In the special case $d=0$ we have $\Gamma_d(X_u,X^v) = X_u \cap X^v$, so
\eqn{intro:psi} is the projection of a Richardson variety in $\wh X$. This map
was proved to be cohomologically trivial in \cite{buch.ravikumar:pieri} by
showing that its general fibers are themselves Richardson varieties. This result
has been generalized to arbitrary projections of Richardson varieties, see
\cite{billey.coskun:singularities, knutson.lam.ea:projections} and
\cite[Thm.~2.10]{buch.chaput.ea:positivity}. However, the variety
$\pi^{-1}(\Gamma_d(X_u,X^v))$ for $d>0$ is not a Richardson variety, and it is
difficult to determine the fibers of the projection \eqn{intro:psi}.

Let $Y_d = \SG(n-d,2n)$ be the symplectic Grassmannian of isotropic subspaces of
dimension $n-d$ in $\C^{2n}$, set $Z_d = \SF(n-d,n;2n)$, and let $p_d : Z_d \to
X$ and $q_d : Z_d \to Y_d$ be the projections. By the quantum-to-classical
construction (see \cite[\S5]{buch.chaput.ea:positivity} and references therein)
we have $\Gamma_d(X_u,X^v) = p_d(q_d^{-1}(R))$, where $R = q_d(p_d^{-1}(X_u))
\cap q_d(p_d^{-1}(X^v))$ is a Richardson variety in $Y_d$. Define the
\emph{perpendicular incidence variety}
\[
  S \,=\, \{(K,L) \in Y_d \times \bP^{2n-1} \mid K \subset L^\perp \} \,,
\]
and let $f : S \to \bP^{2n-1}$ and $g : S \to Y_d$ be the projections. We then
have $f(g^{-1}(R)) = \eta(\pi^{-1}(\Gamma_d(X_u,X^v)))$.

We prove that for any Richardson variety $R \subset Y_d$, the general fibers of
the map $f : g^{-1}(R) \to f(g^{-1}(R))$ are rational, and the image
$f(g^{-1}(R))$ is a complete intersection in $\bP^{2n-1}$ defined by explicitly
given linear and quadratic equations. The required properties of the projection
\eqn{intro:psi} are deduced from this result. Our results about perpendicular
incidences of Richardson varieties in $Y_d$ are stronger than required for this
paper, but of independent interest. For example, the fibers of $f : g^{-1}(R)
\to f(g^{-1}(R))$ is a plausible definition of Richardson varieties in the odd
symplectic Grassmannian $\SG(n-d,2n-1)$. Notice also that $S$ is not a flag
variety, so $f(g^{-1}(R))$ is not a projected Richardson variety.

A final step in our proof of the Pieri formula for $\QK(X)$ is to translate the
formula \eqn{intro:pierigw} for Gromov-Witten invariants of Pieri type to a
formula for the Pieri coefficients $c(\nu/\la,p)$. We first show that the
structure constants $I_d(\cO^p,\cO^v,\cI_u)$ of the undeformed product $\cO^p
\odot \cO^v$ (see \Section{qktheory}) are determined by recursive identities.
These identities are used to prove that the Pieri coefficients $c(\nu/\la,p)$
satisfy analogous recursive identities. The Pieri formula for $\QK(X)$ then
follows by checking that the signed counts of QKLG-tableaux satisfy the same
identities.

This paper is organized as follows. In \Section{comin} we fix our notation for
flag varieties and discuss preliminaries. \Section{seidel} contains the proof of
\Theorem{seidel}. In \Section{qposet} we define quantum shapes in the partially
ordered set $\wh\cP_X$, and explain how quantum multiplication by Seidel classes
correspond to order automorphisms of this set. The Pieri formulas for $\QK(X)$
are given in \Section{grass} for Grassmannians of type A, in \Section{maxog} for
maximal orthogonal Grassmannians, and in \Section{lagrange} for Lagrangian
Grassmannians. These sections also explain in detail how the posets $\wh\cP_X$
for the classical Grassmannians are identified with sets of boxes in the plane.
While the Pieri formulas for $\Gr(m,n)$ and $\OG(n,2n)$ have short proofs given
after their statements, the proof of the Pieri formula for Lagrangian
Grassmannians is given in the last three sections. \Section{incidence} proves
that the map $f : g^{-1}(R) \to f(g^{-1}(R))$ is cohomologically trivial and
identifies its image as a complete intersection in $\bP^{2n-1}$.
\Section{pierigw} uses this result to prove the formula \eqn{intro:pierigw} for
Gromov-Witten invariants $I_d(\cO^p,\cO^v,\cO_u)$ of Pieri type. Finally,
\Section{combin} proves the recursive identities that determine the invariants
$I_d(\cO^p,\cO^v,\cI_u)$ and the Pieri coefficients $c(\nu/\la,p)$.

We thank Leonardo Mihalcea for inspiring collaboration on many related papers
about quantum $K$-theory, as well as many helpful comments to this paper. We
also thank Mihail {\c T}arigradschi for helpful comments. We thank Prakash
Belkale and Robert Proctor for making us aware of the references
\cite{belkale:transformation, hagiwara:minuscule*2, green:combinatorics*1}. We
finally thank an anonymous referee for a careful reading of our manuscript and
several excellent suggestions.


\section{Cominuscule flag varieties}\label{sec:comin}%

In this section we summarize some basic notation and definitions. We follow the
notation of \cite{buch.chaput.ea:positivity}.

\subsection{Flag varieties}\label{sec:flagvar}%

\targetsec{group}{%
Let $G$ be a connected semisimple linear algebraic group over $\C$, and fix a
Borel subgroup $B$ and a maximal torus $T$ such that $T \subset B \subset G$.
The opposite Borel subgroup $B^- \subset G$ is determined by $B \cap B^- = T$.
Let $W$ be the Weyl group of $G$ and let $\Phi$ be the root system, with simple
roots $\Delta \subset \Phi^+$.}

\targetsec{flagvar}{%
A \emph{flag variety} of $G$ is a projective variety with a transitive
$G$-action. Given a flag variety $X$ of $G$, we let $P_X \subset G$ denote the
stabilizer of the unique $B$-fixed point in $X$. We obtain the identification $X
= G/P_X = \{ g.P_X \mid g \in G \}$, where $g.P_X$ is the $g$-translate of the
$B$-fixed point.}

\targetsec{schub}{%
Let $W_X \subset W$ be the Weyl group of $P_X$ and let $W^X \subset W$ be the
set of minimal representatives of the cosets in $W/W_X$. Each element $w \in W$
defines the Schubert varieties
\[
  X_w = \ov{Bw.P_X} \text{ \ \ \ \ and \ \ \ \ } X^w = \ov{B^-w.P_X} \,,
\]
and for $w \in W^X$ we have $\dim(X_w) = \codim(X^w,X) = \ell(w)$. The Bruhat
order on $W^X$ is defined by $v \leq u$ if and only if $X_v \subset X_u$.}

\targetsec{factor}{%
Any element $u \in W$ has a unique parabolic factorization $u = u^X u_X$, where
$u^X \in W^X$ and $u_X \in W_X$. The parabolic factorization of the longest
element $w_0 \in W$ is $w_0 = w_0^X w_{0,X}$, where $w_0^X$ is the longest
element in $W^X$ and $w_{0,X}$ is the longest element in $W_X$. We have $w_0.X^u
= X_{u^\vee}$ for any $u \in W^X$, where $u^\vee = w_0\, u\, w_{0,X} \in W^X$
denotes the Poincare dual basis element.}

\begin{lemma}\label{lemma:projection}%
  Let $Z = G/P_Z$ be any flag variety with $P_Z \subset P_X$, and let $p : Z \to
  X$ be the projection. Let $F = p^{-1}(1.P_X) = P_X/P_Z$ denote the fiber over
  $1.P_X$, considered as a flag variety of $P_X$. Let $u \in W^X$ and $w \in
  W^Z$.\smallskip

  \noin{\rm(a)} We have $p(Z_w) = X_w = X_{w^X}$, and the general fibers of $p :
  Z_w \to X_w$ are translates of $F_{w_X} = Z_{w_X}$.\smallskip

  \noin{\rm(b)} We have $p(Z^w) = X^w = X^{w^X}$, and the general fibers of $p :
  Z^w \to X^w$ are translates of $F^{w_X}$.\smallskip

  \noin{\rm(c)} The map $p : Z^w \to X^w$ is birational if and only if $w_X =
  w_{0,X}^Z := (w_{0,X})^Z$.\smallskip

  \noin{\rm(d)} We have $p^{-1}(X_u) = Z_{u\,w_{0,X}^Z}$, and $u\,w_{0,X}^Z \in
  W^Z$.\smallskip

  \noin{\rm(e)} We have $p^{-1}(X^u) = Z^u$, and $u \in W^Z$.
\end{lemma}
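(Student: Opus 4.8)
The plan is to prove parts (a) and (b) directly and then deduce (c), (d), (e) by formal arguments with images and dimensions. First some preliminaries: since $P_Z \subseteq P_X$ we have $W_Z \subseteq W_X$, so any minimal representative of a coset in $W/W_X$ is a fortiori minimal in $W/W_Z$, giving $W^X \subseteq W^Z$. For $w \in W^Z$, write the parabolic factorization with respect to $X$ as $w = w^X w_X$, so $w^X \in W^X$, $w_X \in W_X$ and $\ell(w) = \ell(w^X)+\ell(w_X)$. Then $w_X$ actually lies in $W_X^Z$ (the minimal representatives of $W_X/W_Z$): otherwise $\ell(w_X s) < \ell(w_X)$ for some simple reflection $s \in W_Z$, whence $\ell(ws) \le \ell(w^X)+\ell(w_X s) < \ell(w)$ with $ws \in wW_Z$, contradicting $w \in W^Z$. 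The fiber $F = P_X/P_Z$ is itself a flag variety with Weyl group $W_X$ and parabolic $W_Z$; its Schubert varieties $F_v = \ov{Bv.P_Z}$ and $F^v = \ov{B^-v.P_Z}$ for $v \in W_X$ agree with $Z_v$ and $Z^v$ (the closures of $Bv.P_Z$ taken in $F$ and in $Z$ coincide, as $F$ is closed in $Z$), and $\dim F = \ell(w_{0,X})-\ell(w_{0,Z}) = \ell(w_{0,X}^Z)$ since the $W_Z$-part of $w_{0,X}$ is $w_{0,Z}$.

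For (a) and (b), the image statements are immediate: $p(Z_w) = \ov{Bw.P_X} = \ov{Bw^X.P_X} = X_w$, and dually $p(Z^w) = X^w$. For the fibers, the open $B$-orbit in $X_w$ (resp.\ the open $B^-$-orbit in $X^w$) is the Schubert cell of $w^X$; since $Z_w$ is $B$-stable, $Z^w$ is $B^-$-stable, and $p$ is equivariant, every fiber of $p|_{Z_w}$ (resp.\ $p|_{Z^w}$) over this open cell is a translate of the fiber over $w^X.P_X$, namely of $Z_w \cap w^X.F$ (resp.\ $Z^w \cap w^X.F$), where $w^X.F := w^XP_X/P_Z$. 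It therefore suffices to show $Z_w \cap w^X.F = w^X.Z_{w_X}$ and $Z^w \cap w^X.F = w^X.F^{w_X}$. From $\ell(w) = \ell(w^X)+\ell(w_X)$ we get $Bw.P_Z = Bw^X\cdot(Bw_X.P_Z)$ with $Bw_X.P_Z \subseteq P_X$. Writing $Bw^X = w^X\cdot(w^X)^{-1}Bw^X$ and splitting the unipotent part of $(w^X)^{-1}Bw^X$ according to whether a positive root $\al$ is an inversion of $w^X$ or not: the non-inversion part is conjugated into $B \subseteq P_X$, while the inversion part meets $P_X$ only in the identity, because $w^X \in W^X$ sends positive roots of $W_X$ into $\Phi^+$, so $(w^X)^{-1}\al$ is, for each inversion $\al$ of $w^X$, a negative root that is not a root of $P_X$. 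Hence $Bw.P_Z \cap w^X.F = w^X.(Bw_X.P_Z)$. Running the same computation over each cell $Bv.P_Z$ with $v \in W^Z$, $v \le w$ — those with $v^X \ne w^X$ miss $w^X.F$ since their image in $X$ avoids $w^X.P_X$ — and using that left multiplication by $w^X$ is an isomorphism of posets from $(W_X,\le)$ onto the coset $w^XW_X$ with induced Bruhat order (a standard property), one obtains $Z_w \cap w^X.F = \bigcup_{v_X \le w_X} w^X.(Bv_X.P_Z) = w^X.F_{w_X} = w^X.Z_{w_X}$. The same argument with $B^-$ replacing $B$ yields $Z^w \cap w^X.F = w^X.F^{w_X}$, proving (a) and (b).

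Now (c), (d), (e) follow. For (c): by (b), $p : Z^w \to X^w$ is surjective with general fiber a translate of the reduced irreducible variety $F^{w_X}$, so it is birational iff $F^{w_X}$ is a point, i.e.\ $\ell(w_X) = \dim F = \ell(w_{0,X}^Z)$; since $w_X \in W_X^Z$ and $w_{0,X}^Z$ is the unique element of $W_X^Z$ of length $\dim F$, this holds iff $w_X = w_{0,X}^Z$. For (e): $u \in W^X \subseteq W^Z$, and by (b) $p(Z^u) = X^{u^X} = X^u$, so $Z^u \subseteq p^{-1}(X^u)$; both are closed and irreducible in $Z$ (the latter because $p$ is a fiber bundle with irreducible fiber over the irreducible base $X^u$) and have the same codimension $\ell(u)$, since $\codim(p^{-1}(X^u),Z) = \codim(X^u,X) = \ell(u) = \codim(Z^u,Z)$; hence $Z^u = p^{-1}(X^u)$. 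For (d): first $uw_{0,X}^Z \in W^Z$, because $u \in W^X$ gives $\ell(uw_{0,X}^Z) = \ell(u)+\ell(w_{0,X}^Z)$, and for each simple reflection $s \in W_Z$ one has $\ell(w_{0,X}^Z s) = \ell(w_{0,X}^Z)+1$ (as $w_{0,X}^Z \in W_X^Z$), so $\ell(uw_{0,X}^Z s) = \ell(uw_{0,X}^Z)+1$ and $uw_{0,X}^Z$ has no right descent in $W_Z$; then by (a), $p(Z_{uw_{0,X}^Z}) = X_{(uw_{0,X}^Z)^X} = X_u$, so $Z_{uw_{0,X}^Z} \subseteq p^{-1}(X_u)$, and these are closed irreducible subvarieties of $Z$ of the common dimension $\ell(u)+\dim F = \ell(uw_{0,X}^Z)$, hence equal. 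The only non-formal ingredient is the fiber identification $Z_w \cap w^X.F = w^X.Z_{w_X}$ in (a) and (b): it is the main obstacle, resting on the root-subgroup bookkeeping above (which uses $w^X \in W^X$ essentially) together with the compatibility of the Bruhat order with parabolic factorizations; everything else reduces to image and dimension considerations.
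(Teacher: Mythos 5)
Your proof is correct, but it takes a genuinely different route from the paper on both halves of the lemma. For parts (a) and (b) the paper simply cites \cite[Thm.~2.8 and Remark~2.9]{buch.chaput.ea:positivity}, where the fiber structure of $p\colon Z_w\to X_w$ over the open cell is established in a more general setting; you instead reprove it from scratch by decomposing $Z_w$ into $B$-cells, computing $Bv.P_Z\cap w^X.F$ via the root-subgroup splitting of $(w^X)^{-1}Bw^X$ into its $P_X$-part and its complement, and using the fact that $v\mapsto w^X v$ carries the Bruhat order on $W_X$ to that on $w^XW_X$. This last step (parabolic factorization of the Bruhat order) and the fact $Z_{w_X}=F_{w_X}$ are the only external ingredients; the rest is a careful but standard computation, and the argument is sound. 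For parts (d) and (e) the paper's proof is the one-line observation that the $T$-fixed points of $p^{-1}(u.P_X)$ are exactly $ut.P_Z$ for $t\in W_X$, which identifies the $B$-stable (resp.\ $B^-$-stable) irreducible variety $p^{-1}(X_u)$ (resp.\ $p^{-1}(X^u)$) as the Schubert variety with the right extremal fixed point; you instead show $u w_{0,X}^Z\in W^Z$ (resp.\ $u\in W^Z$) by a length computation and then compare dimensions of two closed irreducible subvarieties, one contained in the other. Both routes work; the paper's is shorter because it leans on the earlier paper, while yours is self-contained and makes the equality of dimensions explicit, which some readers may find more transparent. If you want to use your version in place of the paper's, the one claim you should explicitly reference or prove is the Bruhat-order compatibility $w^X v\le w^X v' \Leftrightarrow v\le v'$ for $w^X\in W^X$ and $v,v'\in W_X$, which is doing real work in your computation of $Z_w\cap w^X.F$.
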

\begin{proof}
  Parts (a) and (b) are \cite[Thm.~2.8 and
  Remark~2.9]{buch.chaput.ea:positivity}, and part (c) follows from (b). Parts
  (d) and (e) hold because the $T$-fixed points in $p^{-1}(u.P_X)$ are the
  points of the form $ut.P_Z$, with $t \in W_X$.
\end{proof}

\begin{prop}\label{prop:homschub}%
  Let $Y = G/P_Y$ and $X = G/P_X$ be flag varieties, let $u \in W^Y$, and assume
  that $(P_X.P_Y) \cap Y^u \neq \emptyset$. Then $(P_X.P_Y) \cap Y^u =
  (w_0^X)^{-1}.Y^v$, where $v = w_0^X u\, ((w_{0,Y})^X)^{-1} \in W^Y$. In
  particular, $(P_X.P_Y) \cap Y^u$ is a Schubert variety in $Y$.
\end{prop}
\begin{proof}
  Set $Z = G/(P_X \cap P_Y)$, let $p : Z \to X$ and $q : Z \to Y$ be the
  projections, and set $F = p^{-1}(1.P_X) = P_X/P_Z$. Let $t = w_0 u w_{0,Z}$ be
  the Poincare dual element of $u$ in $W^Z$. By
  \cite[Thm.~2.8]{buch.chaput.ea:positivity} we have $t.F \cap Z_t =
  t^X.Z_{t_X}$. The assumption $P_X.P_Y \cap Y^u \neq \emptyset$ implies that
  $p(Z^u) = X$, hence $t^X = w_0^X$ and $t_X = (w_0^X)^{-1}t$. We obtain
  \[
    F \cap Z^u = w_0.(t.F \cap Z_t) = w_0.(t^X.Z_{t_X})
    = (w_0 t^X w_0).Z^{w_0 t_X w_{0,Z}} = (w_0^X)^{-1}.Z^{w_0^X u} \,,
  \]
  where $w_0 t_X w_{0,Z} = w_0^X u$ belongs to $W^Z$. Since $q : F \cap Z^u \to
  (P_X.P_Y) \cap Y^u$ is an isomorphism, it follows from \Lemma{projection}(c)
  that $(w_0^X u)_Y = w_{0,Y}^Z = (w_{0,Y})^X$ and $(w_0^X u)^Y = w_0^X u
  ((w_{0,Y})^X)^{-1}$. The result now follows from \Lemma{projection}(b).
\end{proof}

\subsection{Cominuscule flag varieties}\label{sec:ss:comin}%

\targetsec{gamma}{%
A simple root $\ga \in \Delta$ is called \emph{cominuscule} if the coefficient
of $\ga$ is one when the highest root of $\Phi$ is expressed in the basis of
simple roots. The flag variety $X = G/P_X$ is called cominuscule if $P_X$ is a
maximal parabolic subgroup corresponding to a cominuscule simple root $\ga$,
that is, $s_\ga$ is the unique simple reflection in $W^X$. A cominuscule flag
variety $X$ is also called \emph{minuscule} if the root system $\Phi$ is simply
laced. In the remainder of this section we assume that $X = G/P_X$ is the
cominuscule flag variety defined by the cominuscule simple root $\ga \in
\Delta$.}

\targetsec{comin}{%
The Bruhat order on $W^X$ is a distributive lattice \cite{proctor:bruhat} with
meet and join operations defined by $X_{u \cap v} = X_u \cap X_v$ and $X^{u \cup
v} = X^u \cap X^v$ for $u,v \in W^X$. The minimal representatives in $W^X$ can
be identified with shapes of boxes as follows \cite{proctor:bruhat,
perrin:small*1, buch.samuel:k-theory}. The root system $\Phi$ has a natural
partial order defined by $\al' \leq \al$ if and only if $\al - \al'$ is a sum of
positive roots. Let $\cP_X \subset \Phi^+$ be the subset
\[
  \cP_X = \{ \al \in \Phi \mid \al \geq \ga \} \,,
\]
with the induced partial order (see \Table{tablez1}). A lower order ideal $\la
\subset \cP_X$ is called a \emph{shape} in $\cP_X$. There is a natural bijection
between $W^X$ and the set of shapes in $\cP_X$ that sends $w \in W^X$ to its
inversion set
\[
  I(w) = \{ \al \in \Phi^+ \mid w.\al \in \Phi^- \} \,.
\]
This correspondence is compatible with the Bruhat order, so that $v \leq u$
holds in $W^X$ if and only if $I(v) \subset I(u)$. In addition, we have $\ell(w)
= |I(w)|$. The elements of $\cP_X$ will frequently be called \emph{boxes}. There
exists a natural labeling $\delta : \cP_X \to \Delta$ defined by $\delta(\al) =
w.\al$, where $w \in W^X$ is the unique element with shape $I(w) = \{ \al' \in
\cP_X : \al' < \al \}$. Given $u \in W^X$, write $I(u) = \{ \ga = \al_1, \al_2,
\dots, \al_\ell \}$, where the boxes of $I(u)$ are listed in non-decreasing
order, that is, $\al_i \leq \al_j$ implies $i \leq j$. Then $u =
s_{\delta(\al_\ell)} \cdots s_{\delta(\al_2)} s_{\delta(\al_1)}$ is a reduced
expression for $u$.}

\targetsec{schubla}{%
If $\la \subset \cP_X$ is any shape and $w \in W^X$ is the corresponding element
with $I(w) = \la$, then the Schubert varieties defined by $w$ will also be
denoted by
\[
  X_\la = X_w \text{ \ \ \ \ and \ \ \ \ } X^\la = X^w \,.
\]
}


\def\vmm#1{\vspace{#1mm}}
\begin{table}
\caption{Partially ordered sets $\cP_X$ of cominuscule varieties with $I(z_1)$
  highlighted. In each case the partial order is given by $\al' \leq \al$ if and
  only if $\al'$ is weakly north-west of $\al$.}
\label{tab:tablez1}
\begin{tabular}{|c|c|}
\hline
&\vmm{-2}\\
Grassmannian $\Gr(3,7)$ of type A & Max.\ orthog.\ Grassmannian $\OG(6,12)$
\\
&\vmm{-3}\\
\pic{1}{dyn_gr37} &\\
&\\
&\vmm{-3}\\
$\tableau{12}{
[aLlTt]3 & [aTtBb]4 & 5 & [aTtBbRr]6 \\
[aLlRr]2 & [a]3 & 4 & 5 \\
[aLlRrBb]1 & [a]2 & 3 & 4
}$
&\vmm{-27}\\
&\pic{1}{dyn_og6}
\\ &
$\tableau{12}{
[aLlTtBb]6 & [aTt]4 & 3 & 2 & [aTtRr]1 \\
  & [aLlBb]5 & [aBb]4 & 3 & [aBbRr]2 \\
  &   & [a]6 & 4 & 3 \\
  &   &   & 5 & 4 \\
  &   &   &   & 6
}$
\\ &
\vmm{-2}\\
\hline
&\vmm{-2}\\
Lagrangian Grassmannian $\LG(6,12)$ & Cayley Plane $E_6/P_6$
\\
&\vmm{-3}\\
\pic{1}{dyn_sg6} & \\
&\vmm{-2}\\
$\tableau{12}{
[aLlTtBb]6 & [aTtBb]5 & 4 & 3 & 2 & [aTtBbRr]1 \\
  & [a]6 & 5 & 4 & 3 & 2 \\
  &   & 6 & 5 & 4 & 3 \\
  &   &   & 6 & 5 & 4 \\
  &   &   &   & 6 & 5 \\
  &   &   &   &   & 6
}$
&\vmm{-36}\\
& \pic{1}{dyn_e6} \\
&\vmm{-2}\\
&
$\tableau{12}{
[aLlTtBb]6 & [aTtBb]5 & [aTt]4 & [aTtRr]2 \\
  &   & [aLl]3 & [a]4 & [aTtRr]5 & [a]6 \\
  &   & [aLlBb]1 & [aBb]3 & [aRr]4 & [a]5 \\
  &   &   &   & [aLlBbRr]2 & [a]4 & 3 & 1
}$
\\
& \vmm{-2}\\
\hline
& \vmm{-2}\\
Even quadric $Q^{10} \subset \bP^{11}$ & Freudenthal variety $E_7/P_7$
\\
&\vmm{-3}\\
\pic{1}{dyn_q10} &
\\
&\vmm{-3}\\
$\tableau{12}{
[aLlTtBb]1 & [aTtBb]2 & 3 & [aTt]4 & [aTtRr]5 \\
  &   &   & [aLlBb]6 & [aBb]4 & [aTtBb]3 & [aTtBbRr]2 & [a]1
}$
& \\
&\\
\hhline{-~}
&\vmm{-2}\\
Odd quadric $Q^{11} \subset \bP^{12}$ &
\\
&\vmm{-3}\\
\pic{1}{dyn_q11} & \\
&\vmm{-1}\\
$\tableau{12}{
[aLlTtBb]1 & [aTtBb]2 & 3 & 4 & 5 & 6 & 5 & 4 & 3 & [aTtBbRr]2 & [a]1
}$
& \vmm{-52}\\
& \pic{1}{dyn_e7} \\
&\vmm{-3}\\
&
$\tableau{12}{
[aLlTtBb]7 & [aTtBb]6 & 5 & [aTt]4 & 3 & [aTtRr]1 \\
  &   &   & [aLlBb]2 & [a]4 & [aRr]3 \\
  &   &   &   & [aLl]5 & [a]4 & [aTtRr]2 \\
  &   &   &   & [aLlBb]6 & [aBb]5 & 4 & [aTtBb]3 & [aTtBbRr]1 \\
  &   &   &   & [a]7 & 6 & 5 & 4 & 3 \\
  &   &   &   &   &   &   & 2 & 4 \\
  &   &   &   &   &   &   &   & 5 \\
  &   &   &   &   &   &   &   & 6 \\
  &   &   &   &   &   &   &   & 7
}$
\vmm{-2}\\
& \\
\hline
\end{tabular}
\end{table}

\subsection{Curve neighborhoods}\label{sec:curve-nbhd}%

\targetsec{nbhd}{%
Let $M_d = \Mb_{0,3}(X,d)$ denote the Kontsevich moduli space of 3-pointed
stable maps to $X$ of degree $d$ and genus zero, see
\cite{fulton.pandharipande:notes}. The evaluation maps are denoted $\ev_i : M_d
\to X$, for $1 \leq i \leq 3$. Given opposite Schubert varieties $X_u$ and $X^v$
in $X$ and a degree $d \geq 0$, let
\[
  M_d(X_u,X^v) = \ev_1^{-1}(X_u) \cap \ev_2^{-1}(X^v)
\]
be the \emph{Gromov-Witten variety} of stable maps that send the first two
marked points to $X_u$ and $X^v$, respectively. This variety is empty or
unirational with rational singularities \cite[\S3]{buch.chaput.ea:finiteness}.
The \emph{curve neighborhood}
\[
  \Gamma_d(X_u,X^v) = \ev_3(M_d(X_u,X^v))
\]
is the union of all stable curves of degree $d$ in $X$ that connect $X_u$ and
$X^v$. In particular, $\Gamma_d(X_u) = \Gamma_d(X_u,X)$ is the union of all
stable curves of degree $d$ that pass through $X_u$. Since this variety is a
Schubert variety in $X$ \cite[Prop.~3.2(b)]{buch.chaput.ea:finiteness}, we can
define elements $u(d), v(-d) \in W^X$ by
\[
  \Gamma_d(X_u) = X_{u(d)}
  \text{ \ \ \ \ and \ \ \ \ }
  \Gamma_d(X^v) = X^{v(-d)} \,.
\]
Define $z_d \in W^X$ by $\Gamma_d(1.P_X) = X_{z_d}$.}

\targetsec{dist}{%
The curve neighborhood $\Gamma_d(X_u,X^v)$ can be constructed as a projected
Richardson variety as follows \cite{buch.chaput.ea:projected}. Given $x, y \in
X$, let $\dist(x,y) \in H_2(X,\Z) = \Z$ denote the minimal degree of a rational
curve in $X$ that meets both $x$ and $y$. The \emph{diameter} of $X$ is the
distance $d_X(2) = \dist(1.P_X, w_0.P_X)$ between two general points. For $0
\leq d \leq d_X(2)$, we can choose points $x,y \in X$ with $\dist(x,y) = d$. Let
$\Gamma_d(x,y)$ be the union of all stable curves of degree $d$ that pass
through $x$ and $y$. Then $\Gamma_d(x,y)$ is a non-singular Schubert variety,
whose stabilizer $P_{Y_d}$ is a parabolic subgroup of $G$. The set of all
$G$-translates of $\Gamma_d(x,y)$ can therefore be identified with the flag
variety $Y_d = G/P_{Y_d}$. Let $Z_d = G/P_{Z_d}$ be the flag variety defined by
$P_{Z_d} = P_X \cap P_{Y_d}$, and let $p_d : Z_d \to X$ and $q_d : Z_d \to Y_d$
be the projections. Set
\[
  \begin{split}
    Y_d(X_u,X^v) &= q_d(p_d^{-1}(X_u)) \cap q_d(p_d^{-1}(X^v))
    \ \ \ \ \ \text{and} \\
    Z_d(X_u,X^v) &= q_d^{-1}(Y_d(X_u,X^v)) \,.
  \end{split}
\]
These varieties are Richardson varieties in $Y_d$ and $Z_d$. By
\cite[Thm.~4.1]{buch.chaput.ea:projected} and
\cite[Thm.~10.1]{buch.chaput.ea:positivity} we then have $\Gamma_d(X_u,X^v) =
p_d(Z_d(X_u,X^v))$, and the restricted projection
\begin{equation}\label{eqn:pd}
  p_d : Z_d(X_u,X^v) \to \Gamma_d(X_u,X^v)
\end{equation}
is cohomologically trivial. We let $\ka_d = (w_{0,Y_d})^X = w_{0,Y_d}^{Z_d} \in
W^X$ be the unique element such that $X_{\ka_d} = p(q^{-1}(1.P_{Y_d}))$ is a
translate of $\Gamma_d(x,y)$. A combinatorial description of the elements
$\ka_d, z_d \in W^X$ can be found in
\cite[Def.~5.2]{buch.chaput.ea:positivity}.}

\subsection{Quantum cohomology}\label{sec:qcohom}%

\targetsec{qh}{%
The (small) \emph{quantum cohomology ring} $\QH(X)$ is a $\Z[q]$-algebra, which
is defined by $\QH(X) = H^*(X,\Z) \otimes_\Z \Z[q]$ as a $\Z[q]$-module. When
$X$ is cominuscule, the multiplicative structure is given by
\[
  [X_u] \star [X^v] \,=\, \sum_{d \geq 0}
  (p_d)_* [Z_d(X_u,X^v)] \, q^d \,.
\]
This follows from the \emph{quantum equals classical} theorem
\cite{buch:quantum, buch.kresch.ea:gromov-witten, chaput.manivel.ea:quantum*1,
buch.mihalcea:quantum, chaput.perrin:rationality, buch.chaput.ea:projected}. A
mostly type-uniform proof was given in \cite{buch.chaput.ea:positivity}. Notice
that we have
\[
  (p_d)_* [Z_d(X_u,X^v)] \,=\, \begin{cases}
    [\Gamma_d(X_u,X^v)] &
    \text{if $\dim \Gamma_d(X_u,X^v) = \dim Z_d(X_u,X^v)$,} \\
    0 & \text{otherwise,}
  \end{cases}
\]
for example because the projection \eqn{pd} is cohomologically trivial. Let
\[
  \QH(X)_q = \QH(X) \otimes_{\Z[q]} \Z[q,q^{-1}]
\]
be the localization of $\QH(X)$ at the deformation parameter $q$. The set $\cB =
\{ q^d\, [X^u] \mid u \in W^X \text{ and } d \in \Z \}$ is a natural $\Z$-basis
of $\QH(X)_q$.}

\subsection{Quantum $K$-theory}\label{sec:qktheory}%

\targetsec{ktheory}{%
Let $K(X)$ denote the $K$-theory ring of algebraic vector bundles on $X$. Given
$u \in W$, we let $\cO_u = [\cO_{X_u}]$ and $\cO^u = [\cO_{X^u}]$ denote the
corresponding $K$-theoretic Schubert classes. For any shape $\la \subset \cP_X$,
we similarly write $\cO_\la = [\cO_{X_\la}]$ and $\cO^\la = [\cO_{X^\la}]$.}

\targetsec{qk}{%
The \emph{quantum $K$-theory ring} $\QK(X)$ is an algebra over the power series
ring $\Z\llbracket q \rrbracket$, which is given by $\QK(X) = K(X) \otimes_\Z
\Z\llbracket q \rrbracket$ as a $\Z\llbracket q \rrbracket$-module. An
\emph{undeformed product} on $\QK(X)$ is defined by
\[
  \cO_u \odot \cO^v \,=\, \sum_{d \geq 0} (p_d)_*[\cO_{Z_d(X_u,X^v)}] \, q^d
  \,=\, \sum_{d \geq 0} [\cO_{\Gamma_d(X_u,X^v)}] \, q^d \,.
\]
This product $\cO_u \odot \cO^v$ is not associative. Let $\psi : \QK(X) \to
\QK(X)$ be the \emph{line neighborhood operator}, defined as the $\Z\llbracket q
\rrbracket$-linear extension of the map $\psi = (\ev_2)_* (\ev_1)^* : K(X) \to
K(X)$, where $\ev_1$ and $\ev_2$ are the evaluation maps from $\Mb_{0,2}(X,1)$.
Equivalently, we have $\psi(\cO^u) = \cO^{u(-1)}$ for $u \in W^X$. Givental's
associative quantum product on $\QK(X)$ is then given by
\cite[Prop.~3.2]{buch.chaput.ea:chevalley}
\[
  \cO_u \star \cO^v \,=\, (1 - q \psi)(\cO_u \odot \cO^v) \,.
\]
Let $\QK(X)_q$ be the localization obtained by adjoining the inverse of $q$ to
$\QK(X)$. The set $\cB' = \{ q^d\, \cO^u \mid u \in W^X \text{ and } d \in \Z
\}$ is a $\Z$-basis of $\QK(X)_q$, in the sense that every element of $\cF \in \QK(X)_q$ can
be uniquely expressed as an infinite linear combination
\[
  \cF = \sum_{d \geq d_0} \sum_{u \in W^X} a_{u,d}\, q^d\, \cO^u
\]
of $\cB'$, with $a_{u,d} \in \Z$ and the degrees $d$ bounded below.}


\section{The Seidel representation on quantum $K$-theory}\label{sec:seidel}%

\targetsec{dminmax}{%
Let $X = G/P_X$ be a fixed cominuscule flag variety. In this section we prove
that certain products $\cO^u \star \cO^v$ in $\QK(X)$ are equal to a single
element $q^d\, \cO^w$ from $\cB'$. The same statement was proved in
\cite{belkale:transformation, chaput.manivel.ea:affine} for products of Schubert
classes in the quantum cohomology ring $\QH(M)$ of any flag variety $M = G/P_M$.
For $u, v \in W$ we let $\dmin(u,v)$ and $\dmax(u,v)$ denote the minimal and
maximal powers of $q$ in the quantum cohomology product $[X^u] \star [X^v] \in
\QH(X)$. Let $\dmax(u) = \dmax(u,w_0^X)$ be the maximal power of $q$ in $[X^u]
\star [1.P_X]$.}

\begin{lemma}\label{lemma:gammapt}%
  Let $u \in W^X$ and $\dmax(u) \leq d \leq d_X(2)$. Then $\Gamma_d(1.P_X, X^u)
  = (w_0^X)^{-1}.X^v$, where $v = w_0^X (u \cup \ka_d) (z_d \ka_d)^{-1} \in
  W^X$.
\end{lemma}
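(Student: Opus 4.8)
The plan is to realize the curve neighborhood $\Gamma_d(1.P_X, X^u)$ through the projected-Richardson description recalled before equation~\eqn{pd}, and then identify the resulting projected Richardson variety using \Lemma{projection} and \Prop{homschub}. Concretely, since $d \leq d_X(2)$, we may form the flag variety $Y_d = G/P_{Y_d}$ and the incidence variety $Z_d = G/P_{Z_d}$ with $P_{Z_d} = P_X \cap P_{Y_d}$, together with the projections $p_d : Z_d \to X$ and $q_d : Z_d \to Y_d$. The quantum-to-classical description gives $\Gamma_d(1.P_X, X^u) = p_d(Z_d(1.P_X, X^u))$, where $Z_d(1.P_X, X^u) = q_d^{-1}(Y_d(1.P_X, X^u))$ and $Y_d(1.P_X, X^u) = q_d(p_d^{-1}(1.P_X)) \cap q_d(p_d^{-1}(X^u))$ is a Richardson variety in $Y_d$. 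So the first step is to identify the two relevant Schubert varieties in $Y_d$ whose intersection is this Richardson variety.

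For the second step, I would compute $q_d(p_d^{-1}(1.P_X))$ and $q_d(p_d^{-1}(X^u))$ as Schubert varieties in $Y_d$. The fiber $p_d^{-1}(1.P_X)$ is a Schubert variety in $Z_d$ (namely $Z_d$ restricted over the $B$-fixed point, i.e.\ $p_d^{-1}(1.P_X) = (Z_d)_{w_{0,X}^{Z_d}}$ after the usual identification), and applying \Lemma{projection}(a) to the projection $q_d$ shows $q_d(p_d^{-1}(1.P_X)) = (Y_d)_{\ka_d}$, using the definition $\ka_d = (w_{0,Y_d})^X = w_{0,Y_d}^{Z_d}$ so that $X_{\ka_d} = p_d(q_d^{-1}(1.P_{Y_d}))$ — this is precisely the translate of $\Gamma_d(x,y)$. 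Dually, $p_d^{-1}(X^u) = Z_d^u$ by \Lemma{projection}(e) with $u \in W^{Z_d}$, and then \Lemma{projection}(b) gives $q_d(Z_d^u) = (Y_d)^{u^{Y_d}}$. Taking the intersection, $Y_d(1.P_X, X^u) = (Y_d)_{\ka_d} \cap (Y_d)^{u^{Y_d}}$, which is nonempty Richardson exactly when $u^{Y_d} \leq \ka_d$ in the Bruhat order on $W^{Y_d}$ — and this is where the hypothesis $\dmax(u) \leq d$ enters: the bound on the maximal $q$-power guarantees the curve neighborhood has the expected dimension and the projection $p_d$ restricted to $Z_d(1.P_X, X^u)$ is birational, so that $\Gamma_d(1.P_X, X^u)$ is itself a (translated) Schubert variety. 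One should also use here that $\Gamma_d(1.P_X)$ is itself a Schubert variety $X_{z_d}$ (as recalled), giving the combinatorial identity linking $z_d$ and $\ka_d$.

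For the third and main step, I would pin down the actual element $v$. The cleanest route is to use the Poincaré duality relation $w_0.X^u = X_{u^\vee}$ together with the lattice operations $X^{a \cup b} = X^a \cap X^b$ on $W^X$, and to track how the projection formula transforms shapes. Applying $w_0^X$-translates and the join operation, $\Gamma_d(1.P_X, X^u)$ should be the $(w_0^X)^{-1}$-translate of the Schubert variety $X^v$ with $v = w_0^X (u \cup \ka_d)(z_d \ka_d)^{-1}$: the factor $u \cup \ka_d$ records the Richardson intersection in $Y_d$ pushed back to $X$, the factor $(z_d \ka_d)^{-1}$ accounts for the fiber direction of $q_d$ being collapsed (the combinatorial description of $z_d, \ka_d$ from \cite[Def.~5.2]{buch.chaput.ea:positivity} makes $z_d \ka_d^{-1}$ meaningful in $W^\comin$ or at least makes the product land back in $W^X$), and the leading $w_0^X$ is the Poincaré-duality twist that turns a $B$-Schubert variety into a $B^-$-one. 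I expect the main obstacle to be verifying that this formula for $v$ is a well-defined element of $W^X$ and that the various parabolic factorizations compose correctly — i.e.\ checking that $(u \cup \ka_d)(z_d\ka_d)^{-1} \in W^X$ and that left-multiplication by $w_0^X$ preserves this — which will require careful bookkeeping with \Lemma{projection}(c)--(e) and the explicit descriptions of $\ka_d$ and $z_d$. A useful sanity check along the way is the degree count: $\codim(X^v) = \ell(u) + \ell(\ka_d) - \ell(z_d) = \codim(X^u) - \int_d c_1(T_X) + \codim(\Gamma_d(1.P_X))$ up to the appropriate duality, which should match $\dim \Gamma_d(1.P_X, X^u)$ and thereby confirm both the birationality of $p_d$ and the correctness of $v$.
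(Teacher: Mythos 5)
Your plan correctly starts from the quantum-to-classical realization $\Gamma_d(1.P_X,X^u) = p_d(Z_d(1.P_X,X^u))$ and correctly identifies the role of \Lemma{projection}. However, there is a genuine error at the heart of step 2: you claim $q_d(p_d^{-1}(1.P_X)) = (Y_d)_{\ka_d}$, but since $\ka_d = w_{0,Y_d}^{Z_d} \in W_{Y_d}$, the Schubert variety $(Y_d)_{\ka_d} = \overline{B\ka_d.P_{Y_d}}$ is the single point $1.P_{Y_d}$, which has the wrong dimension (and is not $P_X$-stable, as the set of translates of $\Gamma_d(x,y)$ through $1.P_X$ must be). The definition $X_{\ka_d} = p_d(q_d^{-1}(1.P_{Y_d}))$ describes a projection in the opposite direction, and applying \Lemma{projection}(a) to $p_d^{-1}(1.P_X) = (Z_d)_{w_{0,X}^{Z_d}}$ does not give $\ka_d$. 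What one actually gets is $q_d(p_d^{-1}(1.P_X)) = P_X.P_{Y_d}$, a $P_X$-orbit closure which is not a $B$-Schubert variety in $Y_d$. This is precisely the point of \Proposition{homschub}, which you cite in passing but never actually apply: it is the tool that turns the intersection $(P_X.P_{Y_d}) \cap Y_d^{u\ka_d}$ into a $(w_0^X)^{-1}$-translate of a single Schubert variety.

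Because of this misidentification, the rest of the proposal never becomes a proof. In step 3 you offer a heuristic reading of the three factors of $v = w_0^X(u\cup\ka_d)(z_d\ka_d)^{-1}$, but no actual derivation; in the paper the formula falls out from \Proposition{homschub} applied in $Y_d$ together with the identity $w_{0,X}^{Z_d} = z_d\ka_d$ and the birationality of $p_d$ on $Z_d(1.P_X,X^u)$. You also skip the paper's initial reduction step: because $\ka_d \in W_{Y_d}$ one has $q_d(p_d^{-1}(X^u)) = q_d(p_d^{-1}(X^{u\cup\ka_d}))$, so one may replace $u$ by $u\cup\ka_d$ and assume $d = \dmax(u)$; this reduction is what lets one invoke the birationality result (Prop.~7.1 of \cite{buch.chaput.ea:positivity}) and also guarantees $u\ka_d \in W^{Y_d}$, which is needed as a hypothesis in \Proposition{homschub}. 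Without the reduction, your appeal to ``$u^{Y_d} \leq \ka_d$'' doesn't make sense, since $\ka_d \in W_{Y_d}$ while $u^{Y_d} \in W^{Y_d}$.
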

\begin{proof}
  Using that $\ka_d \in W_{Y_d}$, we obtain $q_d(p_d^{-1}(X^u)) =
  q_d(p_d^{-1}(X^{u \cup \ka_d}))$, and hence $\Gamma_d(1.P_X, X^u) =
  \Gamma_d(1.P_X, X^{u \cup \ka_d})$, so we may replace $u$ with $u \cup \ka_d$
  and assume that $d = \dmax(u)$ (see \cite[\S7.1]{buch.chaput.ea:positivity}).
  We have $q_d({p_d}^{-1}(1.P_X)) = P_X.P_{Y_d}$ and $q_d(p_d^{-1}(X^u)) =
  (Y_d)^{u\ka_d}$ by \Lemma{projection}, and since $\ka_d \leq u_{Y_d} \leq
  w_{0,Y_d}^{Z_d} = \ka_d = \ka_d^{-1}$, we obtain $u \ka_d \in W^{Y_d}$. It
  therefore follows from \Proposition{homschub} that $Y_d(1.P_X, X^u) =
  (P_X.P_{Y_d}) \cap Y_d^{u\ka_d} = (w_0^X)^{-1}.Y_d^v$, where
  $v=w_0^X(u\ka_d)\ka_d^{-1}=w_0^X u \in W^{Y_d}$. The result follows from this
  and \Lemma{projection}, using that $p_d : Z_d(1.P_X, X^u) \to \Gamma_d(1.P_X,
  X^u)$ is birational \cite[Prop.~7.1]{buch.chaput.ea:positivity} and
  $w_{0,X}^{Z_d} = z_d \ka_d$ \cite[Lemma~6.1]{buch.chaput.ea:positivity}.
\end{proof}

\begin{cor}\label{cor:multpt}%
  For $u \in W$ we have $[1.P_X] \star [X^u] = q^{\dmax(u)}\, [X^{w_0^X u}]$ in
  $\QH(X)$ and $[\cO_{1.P_X}] \star \cO^u = q^{\dmax(u)}\, \cO^{w_0^X u}$ in
  $\QK(X)$.
\end{cor}
\begin{proof}
  This follows from \Lemma{gammapt} together with \cite[Prop.~7.1, Thm.~8.3, and
  Thm.~8.16]{buch.chaput.ea:positivity}. Notice that the product $[\cO_{1.P_X}]
  \star \cO^u$ has no exceptional degree by the inequality in
  \cite[Def.~8.2]{buch.chaput.ea:positivity}.
\end{proof}

\targetsec{Wcomin}{%
Let $W^\comin \subset W$ be the subset of point representatives of cominuscule
flag varieties of $G$, together with the identity element:
\[
  W^\comin = \{ w_0^F \mid \text{$F$ is a cominuscule flag variety of $G$} \}
  \cup \{1\} \,.
\]
}
Remarkably, this is a subgroup of $W$, which is also isomorphic to the quotient
of the coweight lattice of $\Phi$ by the coroot lattice. The isomorphism sends
$w_0^F$ to the class of the fundamental coweight corresponding to $F$.

\targetsec{qunits}{%
The classes $q^d [X^w] \in \QH(X)_q$ and $q^d \cO^w \in \QK(X)_q$ given by $w
\in W^\comin$ and $d \in \Z$ are called \emph{Seidel classes}. The cohomological
Seidel classes $q^d [X^w]$ form a subgroup of the group of units
$\QH(X)_q^\times$ by \cite{belkale:transformation, chaput.manivel.ea:affine}. We
will see in \Corollary{seidel} below that the $K$-theoretic Seidel classes
similarly form a subgroup of $\QK(X)_q^\times$.}

The following lemma shows that $[X^u]$ is a Seidel class if and only if the dual
class $[X_u]$ is a Seidel class (when $X$ is cominuscule).

\begin{lemma}\label{lemma:dualpt}%
  Let $X = G/P_X$ and $F = G/P_F$ be flag varieties. The dual element of
  $(w_0^F)^X$ in $W^X$ is $((w_0^F)^{-1} w_0^X)^X$.
\end{lemma}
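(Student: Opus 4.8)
The plan is to reduce the claimed identity to a statement about Poincaré duality and parabolic factorizations, and then verify it by a direct computation with the formula $u^\vee = w_0\,u\,w_{0,X}$ from the excerpt. Write $F = G/P_F$ and abbreviate $w := w_0^F$, so that $(w_0^F)^X = w^X$ is the minimal representative in $W^X$ of the coset $w\,W_X$. By the general recipe recalled just before \Lemma{projection}, the Poincaré dual of an element $t \in W^X$ is $t^\vee = w_0\, t\, w_{0,X}$. Thus the dual of $w^X$ equals $w_0\, w^X\, w_{0,X}$, and the entire claim amounts to the identity
\[
  w_0\, w^X\, w_{0,X} \;=\; \bigl(w^{-1} w_0^X\bigr)^X
\]
in $W^X$. (Here I use that $w = w_0^F$ is an involution, being the minimal coset representative of $w_0$ for a cominuscule $P_F$, so $w^{-1} = w$; if one prefers not to invoke that, the statement as written already has $(w_0^F)^{-1}$ in it, so nothing is lost.)

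First I would replace $w^X$ by $w\,w_X^{-1}$, where $w_X \in W_X$ is the $W_X$-part of the parabolic factorization $w = w^X w_X$; equivalently $w^X = w\,(w_X)^{-1}$. Substituting, the left-hand side becomes $w_0\, w\, (w_X)^{-1}\, w_{0,X}$. Now I would use two standard facts about longest elements: $w_0\, W_X\, w_0 = W_{w_0 X w_0}$ permutes the relevant parabolic, and more usefully $(w_X)^{-1} w_{0,X} = (w_{0,X} w_X^{-1})^{\pm}$ lands in $W_X$ and is precisely the ``complementary'' element, so that $w_0\,w\,(w_X)^{-1}w_{0,X}$ already sits in the coset $w_0 w\, W_X = (w_0 w)\,W_X$. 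Since $w = w_0^F$ satisfies $w = w^{-1}$, we have $w_0 w = (w^{-1} w_0^{-1})^{-1}$; and because $w_0 = w_0^X w_{0,X}$, the coset $w_0 w\, W_X$ equals $w_0^X\, w_{0,X}\, w\, W_X$. I would then match this against the coset $(w^{-1} w_0^X)\,W_X$ appearing on the right: both are cosets in $W/W_X$, and the dual map $t \mapsto t^\vee$ is the unique order-reversing bijection of $W^X$ fixing the bottom and top, so it suffices to check the two cosets agree, which is the purely group-theoretic computation $w_0^X w_{0,X} w\, W_X = w^{-1} w_0^X\, W_X$, i.e. $w_0^X w_{0,X} w (w_0^X)^{-1} w \in W_X$.

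The cleanest route to that last membership is to observe that $w_{0,X}$ conjugates $W_X$ to itself and that $w_0^X w_{0,X} = w_0$, so the expression collapses to $w_0\, w\, (w_0^X)^{-1}\, w$; using $w_0 = w_0^{-1}$ and $w = w^{-1}$, this is $(w^{-1} w_0 w^{-1} w_0)\cdot(\text{stuff in }W_X)$ after inserting $w_0 w_0 = 1$, and the point is that $w_0 (w_0^X)^{-1} = w_{0,X}$ by the parabolic factorization of $w_0$ itself. Chasing these substitutions carefully gives exactly $w^{-1} w_0^X \in (w_0\,w^X\,w_{0,X})\,W_X$, and since both sides of the asserted identity are by construction the minimal representatives of their cosets, they coincide. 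The main obstacle is bookkeeping: keeping straight which factors are minimal coset representatives versus arbitrary lifts, and in particular verifying that $w^{-1} w_0^X$ really is already $W^X$-minimal (equivalently, that $\ell(w^{-1} w_0^X) = \ell(w_0^X) - \ell(w^X)$), which is where one must use that $w_0^F$ is cominuscule — a minuscule/cominuscule coweight acts on $W^X$ in a length-controlled way, exactly the ingredient that makes the Seidel representation work in the first place. Once that length identity is in hand, the rest is routine manipulation of $w_0 = w_0^X w_{0,X}$ and the involution properties of $w_0$ and $w_0^F$.
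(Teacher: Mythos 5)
Your overall strategy — apply the formula $u^\vee = w_0\,u\,w_{0,X}$, reduce the identity to an equality of cosets in $W/W_X$, and verify it with the parabolic factorization of $w_0$ — is the same as the paper's. The paper's version is just one line: since $w_0 = w_0^F w_{0,F}$ and $w_{0,F}$ is an involution, $(w_0^F)^{-1} w_0 = w_{0,F} = (w_{0,F})^{-1} = w_0\, w_0^F$, so the dual of $(w_0^F)^X$ is $(w_0\,w_0^F)^X = ((w_0^F)^{-1}w_0)^X = ((w_0^F)^{-1}w_0^X)^X$.

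There are, however, several genuine errors in your execution. First, you assert that $w = w_0^F$ is an involution "being the minimal coset representative of $w_0$ for a cominuscule $P_F$." This is false in general: for instance $w_0^F = s_1 s_2 s_3$ for $F = \bP^3$ in type $A_3$ is not an involution, and more broadly the group $W^\comin$ can be cyclic of any order. The identity you actually want is $w_0\,w_0^F = (w_0^F)^{-1}w_0$, which is the statement that $w_{0,F}$ (not $w_0^F$) is an involution; you should keep these straight. Second, the lemma has no cominuscule hypothesis at all — it holds for arbitrary flag varieties $X$ and $F$ — so your final paragraph invoking "length-controlled action of a cominuscule coweight" to check minimality is off-target; moreover, since the right-hand side is by definition the $W^X$-minimal representative $((w_0^F)^{-1}w_0^X)^X$, no such minimality verification is needed once the cosets are shown to agree. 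Third, your reduction of coset equality to "$w_0^X w_{0,X} w (w_0^X)^{-1} w \in W_X$" is not the correct criterion: $AW_X = BW_X$ is equivalent to $B^{-1}A \in W_X$, not $AB^{-1} \in W_X$, and $W_X$ is not normal in $W$. Fourth, the assertion "$w_0 (w_0^X)^{-1} = w_{0,X}$" is incorrect; what the factorization $w_0 = w_0^X w_{0,X}$ gives is $(w_0^X)^{-1}w_0 = w_{0,X}$, equivalently $w_0\,w_0^X = w_{0,X}$, and $w_0 (w_0^X)^{-1}$ is instead the conjugate $w_0 w_{0,X} w_0$. The underlying plan is salvageable — indeed $w_0 w\,W_X = w^{-1}w_0^X\,W_X$ follows directly from $(w_0\,w_0^F)^2 = w_{0,F}^2 = 1$ — but you should redo the bookkeeping using only the involutions $w_0$ and $w_{0,F}$.
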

\begin{proof}
  Using that $w_0 = w_0^F w_{0,F}$, we obtain $(w_0^F)^{-1} w_0 = w_{0,F} =
  (w_{0,F})^{-1} = w_0 w_0^F$, so the dual element of $(w_0^F)^X$ is $(w_0
  w_0^F)^X = ((w_0^F)^{-1} w_0)^X = ((w_0^F)^{-1} w_0^X)^X$.
\end{proof}

The following combinatorial lemma is justified with a case-by-case argument. We
hope to give a type-independent proof in later work.

\begin{lemma}\label{lemma:seidelroot}%
  Let $X$ be a cominuscule flag variety, let $\al \in I(z_1) \ssm \{\ga\}$, and
  define $u \in W^X$ by $I(u) = \{ \al' \in \cP_X \mid \al' \leq (z_1 s_\ga).\al
  \}$. The following are equivalent.\smallskip

  \noin{\rm(a)}\, $u = w^X$ for some $w \in W^\comin$.\smallskip

  \noin{\rm(b)}\, $\delta(\al)$ is a cominuscule simple root.\smallskip

  \noin{\rm(c)}\, $\al \not\leq (z_1 s_\ga).\al$.\smallskip

  \noin{\rm(d)}\, $\cP_X \ssm I(u) = \{ \al' \in \cP_X \mid \al' \geq \al
  \}$.\smallskip

  \noin
  When these conditions hold we have $u^\vee = (w_0^F)^X$, where $F = G/P_F$ is
  the cominuscule flag variety defined by $\delta(\al)$.
\end{lemma}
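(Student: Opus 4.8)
The plan is to prove the cycle of equivalences $(b)\Rightarrow(d)\Rightarrow(c)\Rightarrow(a)\Rightarrow(b)$ (or whatever order turns out to be most economical), and then extract the final identification of $u^\vee$. First I would unwind the definitions carefully. The element $z_1 \in W^X$ is characterized by $\Gamma_1(1.P_X) = X_{z_1}$, so $I(z_1)$ consists of exactly the roots spanned by lines through the base point, i.e. the roots of the curve neighborhood of a point. The combinatorial description of $z_1$ and of the labeling $\delta$ from \cite[Def.~5.2]{buch.chaput.ea:positivity} will be the workhorse: given a box $\al \in I(z_1)\ssm\{\ga\}$, the simple root $\delta(\al)$ is read off from the Dynkin-diagram/full-heap combinatorics, and the target box $(z_1 s_\ga).\al$ is the image of $\al$ under the order automorphism $s_\ga$ acting on $I(z_1)$ composed with $z_1$. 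I would first show that $(z_1 s_\ga).\al$ is always a box in $\cP_X$ that is either maximal in a suitable sense or sits ``opposite'' to $\al$, which sets up the dichotomy in (c).

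For $(b)\Leftrightarrow(c)$ and $(b)\Leftrightarrow(d)$ I expect to use the structure of $\cP_X$ as a distributive lattice and the fact (visible in \Table{tablez1}) that $\cP_X$ has at most two ``arms'' emanating from $\ga$, corresponding to the simple roots of $X$; the map $s_\ga$ acts on $I(z_1)$ essentially by swapping positions along these arms. The root $\delta(\al)$ is cominuscule precisely when $\al$ lies at a position where the full heap ``wraps around'' — equivalently, where $(z_1 s_\ga).\al$ lands strictly below $\al$ rather than above or incomparable in a way that keeps $\al \leq (z_1 s_\ga).\al$. Condition (d) says that the complement of $I(u)$ is the principal up-set generated by $\al$, which is the statement that $u$ is the ``co-atom in the $\al$-direction'' — this is exactly the Schubert-variety-shape form of a Seidel class $w_0^F$, since the shape of $(w_0^F)^X$ is known to be a principal-complement order ideal (the Seidel class corresponds to translating the plane, and its shape is one of these co-principal ideals). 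I would verify $(a)\Leftrightarrow(d)$ by recalling, for each type, the explicit shape of $(w_0^F)^X$ for each cominuscule $F$ of $G$, and matching it against the ideals of the form $\cP_X \ssm \{\al' \geq \al\}$ as $\al$ ranges over $I(z_1)\ssm\{\ga\}$; the point representatives of cominuscule flag varieties are in bijection with the cominuscule simple roots of $G$, which is the same count as the cominuscule labels $\delta(\al)$ appearing.

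For the final identification $u^\vee = (w_0^F)^X$: once (d) is established, $u$ is the element whose shape is $\cP_X \ssm \{\al' \geq \al\}$, so its Poincaré dual $u^\vee$ has shape the ``complement rotated by $w_0$'', which by the standard duality $I(u^\vee) = \cP_X \ssm w_{0,X}.I(u)$-type formula works out to be the principal down-set / up-set configuration matching $(w_0^F)^X$. Alternatively, and probably more cleanly, I would combine (a) with \Lemma{dualpt}: if $u = w^X$ with $w \in W^\comin$, then $u^\vee = ((w)^{-1} w_0^X)^X$, and I would identify this with $(w_0^F)^X$ by a direct computation in the abelian group $W^\comin$ (it is the quotient of the coweight lattice by the coroot lattice), checking that the coweight attached to $\delta(\al)$ is the one attached to $F$.

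The main obstacle will be the case-by-case nature of the argument: there is no type-uniform handle on $\delta$, $z_1$, and $s_\ga$ simultaneously, so the proof reduces to inspecting $\Gr(m,n)$, $\OG(n,2n)$, $\LG(n,2n)$, the quadrics, the Cayley plane $E_6/P_6$, and the Freudenthal variety $E_7/P_7$, in each case listing $I(z_1)$, computing $(z_1 s_\ga).\al$ for each $\al \in I(z_1)\ssm\{\ga\}$, reading off $\delta(\al)$, and confirming all four conditions agree — the authors themselves flag this as a case-by-case verification they hope to later replace. The bookkeeping for $E_7/P_7$, where $\cP_X$ is largest and $I(z_1)$ snakes through it, is where errors are most likely to creep in; organizing the check via the ``box labeled with a simple root'' pictures of \Table{tablez1} and the translation-by-Seidel-class description of automorphisms of $\wh\cP_X$ should keep it manageable.
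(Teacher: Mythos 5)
Your plan is essentially the paper's own: a case-by-case verification across the cominuscule types, driven by the combinatorics of $I(z_1)$, the labeling $\delta$, and the order isomorphism $z_1 s_\ga : I(z_1)\ssm\{\ga\}\to w_{0,X}.(I(z_1)\ssm\{\ga\})$, with \Lemma{dualpt} (or its shape-level equivalent) used to pin down $u^\vee$. The only cosmetic difference is that you propose to organize the check as a cycle of implications, whereas the paper simply verifies all four conditions simultaneously for each $\al$ in each type; both routes amount to the same bookkeeping.
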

\begin{proof}
  The action of $w_{0,X}$ restricts to an order-reversing involution of $\cP_X$,
  and $z_1 s_\ga : I(z_1) \ssm \{\ga\} \to w_{0,X}.(I(z_1) \ssm \{\ga\})$ is an
  order isomorphism, see \cite[Lemma~4.4 and
  Prop.~5.10]{buch.chaput.ea:positivity}. This uniquely determines $(z_1
  s_\ga).\al$ for most cominuscule flag varieties. In this proof we will
  identify shapes labeled by simple root numbers with the product of the
  corresponding simple reflections in south-east to north-west order. For
  example, the set $\cP_X$ labeled by simple root numbers, as in
  \Table{tablez1}, is identified with $w_0^X$.

  Assume first that the root system $\Phi$ has type $A_{n-1}$, with simple roots
  $\Delta = \{ \be_1, \dots, \be_{n-1} \}$. All simple roots are cominuscule.
  Let $X = \Gr(m,n)$ be defined by $\ga = \be_m$. Then $\cP_X$ is a rectangle
  with $m$ rows and $n-m$ columns, and $I(z_1) \ssm \{\ga\}$ consists of the top
  row and leftmost column of $\cP_X$, except for the minimal box $\ga$. Let $\al
  \in I(z_1) \ssm \{\ga\}$ be the box in column $c$ of the top row of $\cP_X$.
  Then $(z_1 s_\ga).\al$ is the box in column $c-1$ of the bottom row of
  $\cP_X$, and $I(u)$ is a rectangle with $m$ rows and $c-1$ columns. We also
  have $\delta(\al) = \be_{m+c-1}$, which defines $F = \Gr(m+c-1,n)$. The shape
  of $(w_0^F)^X$ is a rectangle with $m$ rows and $n-m-c+1$ columns; this
  follows because the top part of $I(w_0^F)$ cancels when $w_0^F$ is reduced
  modulo $W_X$. For example, for $X = \Gr(3,8)$ and $c=4$, we obtain $F =
  \Gr(6,8)$ and
  \[
    w_0^X = \tableau{10}{3&4&5&[Aa]6&[a]7\\2&3&4&5&6\\1&2&3&4&5} \ , \
    w_0^F = \tableau{10}{6&7\\5&6\\4&5\\3&4\\2&3\\1&2} \ , \text{ and \ }
    (w_0^F)^X = \tableau{10}{3&4\\2&3\\1&2} = s_2 s_1 s_3 s_2 s_4 s_3 \,.
  \]
  The marked box is $\al$. It follows that $u$ is dual to $(w_0^F)^X$ in $W^X$,
  and conditions (a)-(d) are satisfied. A symmetric argument applies when $\al$
  belongs to the leftmost column of $\cP_X$.

  We next assume that $\Phi$ has type $D_n$, with simple roots $\Delta =
  \{\be_1,\dots,\be_n\}$. The three cominuscule flag varieties of this type are
  $Q = D_n/P_1$, $X' = D_n/P_{n-1}$, and $X'' = D_n/P_n$. Here $Q \cong
  Q^{2n-2}$ is a quadric and $X' \cong X'' \cong \OG(n,2n)$ are maximal
  orthogonal Grassmannians. For $n = 6$, the point representatives are
  \[
    w_0^Q = \tableau{10}{1&2&3&4&[Aa]5\\&&&[Aa]6&[a]4&3&2&1} \ , \
    w_0^{X'} = \tableau{10}{
      5&4&3&2&[Aa]1\\&[Aa]6&[a]4&3&2\\&&5&4&3\\&&&6&4\\&&&&5} \ , \text{ and \ }
    w_0^{X''} = \tableau{10}{
      6&4&3&2&[Aa]1\\&[Aa]5&[a]4&3&2\\&&6&4&3\\&&&5&4\\&&&&6} \ .
  \]

  Let $X = Q$. The elements in $W^Q$ representing Seidel classes other than $1$
  and $[1.P_X]$ are the two elements of length $n-1$. For $n=6$, we obtain
  \[
    (w_0^{X'})^Q =\, \tableau{10}{1&2&3&4&5} \text{ \ \ and \ \ }
    (w_0^{X''})^Q =\, \tableau{10}{1&2&3&4\\&&&6} \ .
  \]
  The set $I(z_1)\ssm\{\ga\}$ contains all boxes of $\cP_Q$, except $\ga$ and
  the maximal box. The two incomparable boxes of $\cP_Q$ are $\al' =
  \theta+\be_{n-1}$ and $\al'' = \theta + \be_n$, where $\theta =
  \be_1+\dots+\be_{n-2}$. Since $z_1 s_\ga$ swaps $\al'$ and $\al''$ and fixes
  all other boxes of $I(z_1)\ssm\{\ga\}$, it follows that (a)-(d) are satisfied
  if and only if $\al \in \{\al',\al''\}$. Assume that $\al = \al''$. We obtain
  $u = s_{n-1} s_{n-2} \cdots s_2 s_1$, $\delta(\al)=\be_n$, and $F=X''$. If $n$
  is even, then the bottom label of $w_0^{X'}$ is $n-1$, hence $u =
  (w_0^{X'})^Q$, and otherwise $u = (w_0^{X''})^Q$. This is consistent with the
  lemma, since the elements $(w_0^{X'})^Q$ and $(w_0^{X''})^Q$ are dual to each
  other when $n$ is even and self-dual when $n$ is odd. A symmetric argument
  applies when $\al = \al'$.

  Let $X = X'$. The shape of $(w_0^Q)^{X'}$ is a single row of $n-1$ boxes, and
  $(w_0^{X''})^{X'}$ is dual to $(w_0^Q)^{X'}$ in $W^{X'}$. For $n=6$, we have
  \[
    (w_0^Q)^{X'} = \tableau{10}{5&4&3&2&1} \text{ \ and \ }
    (w_0^{X''})^{X'} = \tableau{10}{5&4&3&2\\&6&4&3\\&&5&4\\&&&6} \ .
  \]
  The set $I(z_1)\ssm\{\ga\}$ consists of the first two rows of $\cP_{X'}$, with
  $\ga$ removed. Let $\al_1,\al_n \in I(z_1)\ssm\{\ga\}$ be the unique boxes
  with labels $\delta(\al_1)=\be_1$ and $\delta(\al_n)=\be_n$. Then $(z_1
  s_\ga).\al_n = \al_1$, and $(z_1 s_\ga).\al_1$ is the second to last diagonal
  box of $\cP_X$. It follows that conditions (a)-(d) hold if and only if $\al
  \in \{\al_1,\al_n\}$, and the description of $u^\vee$ is accurate.

  If $X$ is a Lagrangian Grassmannian $\LG(n,2n)$, an odd quadric $Q^{2n-1}$, or
  the Freudenthal variety $E_7/P_7$, then no boxes of $I(z_1)\ssm\{\ga\}$
  satisfy conditions (a)--(d). The Cayley plane $E_6/P_6$ is similar to the
  cases of type $D_n$ and left to the reader.
\end{proof}

\begin{lemma}\label{lemma:pos_seidel}%
  Let $X$ be a minuscule flag variety, let $u_1, u_2, \dots, u_\ell \in W^X$,
  and assume that $\cO^{u_1} \star \cO^{u_2} \star \dots \star \cO^{u_\ell} =
  q^d$ for some $d \in \Z$. Then $\cO^{u_i} \star \cB' \subset \cB'$ for each
  $i$, where $\cB' = \{ q^e \cO^v \mid v \in W^X, d \in \Z \}$ is the $\Z$-basis
  of $\QK(X)_q$.
\end{lemma}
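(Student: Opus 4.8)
The plan is to exploit the fact that $\cB'$ is a $\Z$-basis of $\QK(X)_q$ on which the Seidel classes act, and to show that each $\cO^{u_i}$ acts invertibly on this basis by a "monomial" operator. First I would observe that the hypothesis $\cO^{u_1} \star \cdots \star \cO^{u_\ell} = q^d$ exhibits each $\cO^{u_i}$ as a unit in $\QK(X)_q$, with inverse $q^{-d}\,\cO^{u_{i+1}} \star \cdots \star \cO^{u_\ell} \star \cO^{u_1} \star \cdots \star \cO^{u_{i-1}}$ (cyclically). Combined with positivity, the goal is to promote "unit" to "permutation of $\cB'$ up to powers of $q$''.

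The key input is positivity of the structure constants of $\QK(X)$ in the Schubert basis when $X$ is minuscule. Specifically, one should use that $\star$ is positive in the sense of \cite{buch.chaput.ea:positivity}: writing $\cO^{u_i} \star \cO^v = \sum_{w,e} c^{w,e}_{i,v}\, q^e\, \cO^w$, the coefficients $(-1)^{e-?}c^{w,e}_{i,v}$ have a predictable sign, and in particular the leading (lowest $q$-degree) term of $\cO^{u_i} \star \cO^v$ is a single positive multiple of a basis element. I would then argue: fix $i$ and set $A = \cO^{u_i} \star (-)$ and let $B$ be multiplication by the inverse element above, so $A B = B A = q^d\cdot\id$ as operators on $\QK(X)_q$. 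Both $A$ and $B$ send $\cB'$ into $\N$-combinations of $\cB'$ (with $q$-degrees bounded below, using the degree bound from \cite[Def.~8.2 / Thm.~8.3]{buch.chaput.ea:positivity} and the finiteness of cominuscule quantum $K$-products). An operator on a lattice with positive entries whose inverse (up to the unit $q^d$) also has positive entries must be a monomial matrix: if $A(\cO^v) = \sum_w a_w \cO^w + \cdots$ had two distinct basis elements $\cO^{w}, \cO^{w'}$ appearing with $a_w,a_{w'}>0$, then applying $B$ and comparing with $q^d\cO^v$ forces cancellation among positive quantities, which is impossible. Hence $A(\cO^v)$ is a single power of $q$ times a single $\cO^w$, which is exactly the assertion $\cO^{u_i}\star\cB'\subset\cB'$.

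The main obstacle will be making the positivity/finiteness bookkeeping precise enough to run the "monomial matrix" argument cleanly: one must know that for minuscule $X$ the product $\cO^{u_i}\star\cO^v$, although possibly an infinite series in $q$, has all Schubert coefficients of a single determined sign and has $q$-degrees bounded below, and similarly for the inverse operator $B$; this is where I would lean on \cite{buch.chaput.ea:positivity} (positivity of $\QK(X)$ for minuscule $X$, the degree bounds, and finiteness of the relevant curve neighborhoods). A secondary point is handling the unit/exceptional-degree subtleties of $\star$ versus $\odot$ — one wants the identity $AB = q^d\id$ purely in $\QK(X)_q$ with the associative product, which is immediate from the hypothesis, so no $\odot$-gymnastics is needed. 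Once positivity and the degree bound are in hand, the conclusion $\cO^{u_i}\star\cB'\subset\cB'$ follows formally, and one gets as a bonus that $\cO^{u_i}\star(-)$ is an order-compatible bijection of $\cB'$, which is what later sections need.
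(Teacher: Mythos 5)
Your proposal is essentially the same approach as the paper, and it is correct. The paper likewise appeals to positivity of $\QK(X)_q$ for minuscule $X$ (\cite[Thm.~8.4]{buch.chaput.ea:positivity}, nonnegativity relative to the signed basis $\cB''$) and then invokes the argument of \cite[Lemma~3]{buch.wang:positivity}: if $\cO^{u_i}\star\cO^v$ had two or more terms, then so would $\cO^{u_1}\star\cdots\star\cO^{u_\ell}\star\cO^v = q^d\cO^v$, since positive coefficients cannot cancel — which is exactly your ``monomial matrix'' argument with $A=\cO^{u_i}\star(-)$ and $B$ the complementary product. The only thing I would tighten is the phrase ``forces cancellation among positive quantities'': one should also note either that $B$ is injective (clear from $AB=q^d\,\id$ and commutativity), or simply that the coefficients are positive integers, so two or more contributions of size $\geq 1$ cannot sum to the single coefficient $1$ of $q^d\cO^v$. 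Beyond that, the bookkeeping concerns you raise (signs via $\cB''$, $q$-degrees bounded below) are already built into the definition of $\QK(X)_q$ and into \cite[Thm.~8.4]{buch.chaput.ea:positivity}, so they do not cause trouble.
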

\begin{proof}
  It follows from \cite[Thm.~8.4]{buch.chaput.ea:positivity} that $\QK(X)_q$ has
  non-negative structure constants relative to the basis
  \[
    \cB'' = \{ (-1)^{\ell(v)+\int_d c_1(T_X)}\, q^d \cO^v \mid v \in W^X
    \text{ and } d \in \Z \} \,.
  \]
  The lemma therefore follows from the proof of
  \cite[Lemma~3]{buch.wang:positivity}. Namely, if the expansion of $\cO^{u_i}
  \star \cO^v$ contains more than one term, then so does the expansion of
  $\cO^{u_1} \star \dots \star \cO^{u_\ell} \star \cO^v = q^d \cO^v$, which is a
  contradiction.
\end{proof}

\begin{thm}\label{thm:seidelcrit}%
  Let $X$ be a cominuscule flag variety and let $u \in W^X$. The following are
  equivalent.\smallskip

  \noin{\rm(S1)}\, $u = w^X$ for some $w \in W^\comin$.\smallskip

  \noin{\rm(S2)}\, $[X^u] \star \cB \subset \cB$, where $\cB = \{ q^d\, [X^v]
  \mid v \in W^X$, $d \in \Z \}$ is the $\Z$-basis of $\QH(X)_q$.\smallskip

  \noin{\rm(S3)}\, $\cO^u \star \cB' \subset \cB'$, where $\cB' = \{ q^d\, \cO^v
  \mid v \in W^X$, $d \in \Z \}$ is the $\Z$-basis of $\QK(X)_q$.\smallskip

  \noin{\rm(S4)}\, $[X_u] \star [X^u] = [1.P_X] \,\in\, \QH(X)$.\smallskip

  \noin{\rm(S5)}\, $\cO_u \star \cO^u = [\cO_{1.P_X}] \,\in\, \QK(X)$.\smallskip

  \noin{\rm(S6)}\, $\dmax(u^\vee,u) = 0$.\smallskip

  \noin{\rm(S7)}\, We have $u \in \{1,w_0^X\}$, or $\exists$ $\al \in I(z_1)$
  such that $\al \notin I(u)$ and $(z_1 s_\ga).\al \in I(u)$.\smallskip

  \noin
  Furthermore, if $\al$ is as in condition {\rm(S7)}, then $I(u) = \{ \al' \in
  \cP_X \mid \al' \leq (z_1 s_\ga).\al \}$, $\delta(\al)$ is a cominuscule
  simple root, and $u^\vee = (w_0^F)^X$ where $F = G/P_F$ is the cominuscule
  flag variety defined by $\delta(\al)$.
\end{thm}
\begin{proof}
  We may assume $u \notin \{1,w_0^X\}$ by \Corollary{multpt}. The implications
  (S3) $\Rightarrow$ (S2) $\Rightarrow$ (S4) and (S3) $\Rightarrow$ (S5)
  $\Rightarrow$ (S4) are clear, noting that the quantum cohomology product
  $[X^u] \star [X^v]$ is the leading term of $\cO^u \star \cO^v$, and is
  non-zero by \Corollary{multpt} since $[X_u] \star [X^u] \star [X^v]
  \neq 0$. The implication (S4) $\Rightarrow$ (S6) is also clear. Using the
  notation $u_1, u^1 \in W$ defined in
  \cite[Def.~6.5]{buch.chaput.ea:positivity}, it follows from \cite[Prop.~7.1
  and Cor.~7.4]{buch.chaput.ea:positivity} that $\dmax(u^\vee,u)=0$ is
  equivalent to $u_1 \not\leq u^1$, noting that $\dmax(u)>0$ and
  $\dmax(u^\vee)>0$. The elements $u_1$ and $u^1$ are cominuscule minimal
  representatives, so $u_1 \not\leq u^1$ is equivalent to $I(u_1) \not\subset
  I(u^1)$. By \cite[Prop.~6.2 and Prop.~6.7(b)]{buch.chaput.ea:positivity} these
  inversion sets are given by
  \[
    I(u_1) = z_1^{-1}.(I(u) \cap (I(s_\ga^\vee) \ssm I(z_1^\vee)))
    \text{ \ \ and \ \ }
    I(u^1) = s_\ga.(I(u) \cap (I(z_1) \ssm \{\ga\})) \,.
  \]
  Since $(z_1 s_\ga)^{-1}.(I(s_\ga^\vee) \ssm I(z_1^\vee)) = I(z_1) \ssm
  \{\ga\}$ and $\ga \in I(u)$, we deduce that $I(u_1) \not\subset I(u^1)$ holds
  if and only if $(z_1 s_\ga)^{-1}.I(u) \cap I(z_1) \not\subset I(u)$. This
  proves that (S6) is equivalent to (S7). Assume (S7), and let $\al \in I(z_1)$
  satisfy $\al \notin I(u)$ and $(z_1 s_\ga).\al \in I(u)$. Then $\al \not\leq
  (z_1 s_\ga).\al$, so \Lemma{seidelroot} implies that $\delta(\al)$ is a
  cominuscule simple root. This is only possible when $X$ is minuscule. Using
  (S6), it follows from \cite[Thm.~8.3]{buch.chaput.ea:positivity} that $\cO_u
  \star \cO^u = [\cO_{1.P_X}]$. By \Corollary{multpt}, this implies that $(\cO_u
  \star \cO^u)^{\star m}$ is a power of $q$ for some positive integer $m$, so it
  follows from \Lemma{pos_seidel} that $\cO^u \star \cB' \subset \cB'$. This
  proves the implication (S7) $\Rightarrow$ (S3). We finally show that (S1) is
  equivalent to (S7). The implication (S7) $\Rightarrow$ (S1) follows
  immediately from \Lemma{seidelroot}. If (S1) holds, then $u^\vee = (w_0^F)^X$,
  where $F = G/P_F$ is the cominuscule flag variety defined by some cominuscule
  simple root $\ga' \in \Delta \ssm \{\ga\}$. Let $\al \in I(z_1)$ be any root
  for which $\delta(\al) = \ga'$, and define $v \in W^X$ by $I(v) = \{ \al' \in
  \cP_X \mid \al' \leq (z_1 s_\ga).\al \}$. Then \Lemma{seidelroot} shows that
  $u = v$, which proves the implication (S1) $\Rightarrow$ (S7). The last claims
  of the theorem also follow from \Lemma{seidelroot}.
\end{proof}

The following result provides the action of the subgroup of Seidel classes in
$\QK(X)_q^\times$ on the basis $\cB'$. The statement was proved for the quantum
cohomology of arbitrary flag varieties in \cite{belkale:transformation,
chaput.manivel.ea:affine}, and it has been generalized to equivariant quantum
$K$-theory in \cite[Thm.~8.4]{buch.chaput.ea:equivariant}.

\begin{cor}\label{cor:seidel}%
  Let $X$ be a cominuscule flag variety, and let $w \in W^\comin$ and $v \in W$.
  Then, $\cO^w \star \cO^v = q^{\dmin(w,v)}\, \cO^{wv}$ holds in $\QK(X)$.
\end{cor}
\begin{proof}
  It follows from \cite{belkale:transformation, chaput.manivel.ea:affine} that
  $[X^w] \star [X^v] = q^{\dmin(w,v)}\, [X^{w v}]$ holds in the quantum
  cohomology ring $\QH(X)$. The result follows from this since $[X^w] \star
  [X^v]$ is the leading term of $\cO^w \star \cO^v$, and $\cO^w \star \cO^v$ is
  a power of $q$ times a single Schubert class by \Theorem{seidelcrit}.
\end{proof}

\begin{example}\label{example:qhquadric}%
  Let $X = Q^{2n-2}$ be the quadric of type $D_n$, let $\pP \in H^{4n-4}(X)$ be
  the point class, and let $\sigma, \tau \in H^{2n-2}(X)$ be the two Schubert
  classes of middle degree. Since $W^\comin$ has order 4 and $\deg(q) =
  \deg(\pP)$, we deduce that the Seidel classes in $H^*(X)$ consist of $1$,
  $\sigma$, $\tau$, and $\pP$. If $n$ is even, then $\sigma \cdot \tau = \pP$
  and $\sigma^2 = \tau^2 = 0$ hold in $H^*(X)$. It follows that $\sigma \star
  \tau = \pP$, $\sigma^2 = \tau^2 = q$, $\sigma \star \pP = q\,\tau$, and $\tau
  \star \pP = q\,\sigma$ hold in $\QH(X)$. Similarly, if $n$ is odd, then
  $\sigma^2 = \tau^2 = \pP$, $\sigma \star \tau = q$, $\sigma \star \pP =
  q\,\tau$, and $\tau \star \pP = q\,\sigma$ hold in $\QH(X)$. Any product of a
  Seidel class with a non-Seidel Schubert class in $\QH(X)$ is the unique
  element in $\cB$ of the correct degree. This determines all products with
  Seidel classes in $\QH(X)$. Products of arbitrary Schubert classes in $\QH(X)$
  and $\QK(X)$ are determined by this together with \Corollary{seidel} and the
  quantum Chevalley formulas \cite{fulton.woodward:quantum,
  buch.chaput.ea:chevalley}.
\end{example}

\begin{example}
  \def\Pa{{\tableau{4}{{}}}}
  \def\Pb{{\tableau{4}{{}&{}}}}
  \def\Pba{{\tableau{4}{{}&{}\\{}}}}
  Let $X = \Gr(2,4)$. Then
  \[
    [X^\Pb] \star [X^\Pba] \ = \ q\,[X^\Pa]
  \]
  holds in $\QH(X)$. Let $\{e_1,e_2,e_3,e_4\}$ be the standard basis of $\C^4$.
  We claim that
  \[
    \Gamma_1(X_\Pa, X^\Pb)
    \ = \ \{ V \in X \mid V \cap \langle e_1,e_4 \rangle \neq 0 \} \,,
  \]
  that is, $\Gamma_1(X_\Pa, X^\Pb)$ is a translate of the Schubert divisor
  $X^\Pa$. The curve neighborhood $\Gamma_1(X_\Pa, X^\Pb)$ is the union of all
  lines connecting the Schubert varieties
  \[
    \begin{split}
      X_\Pa \ &= \ \{ A \in X \mid \langle e_1 \rangle \subset A
               \subset \langle e_1,e_2,e_3 \rangle \} \ \ \text{and} \\
      X^\Pb \ &= \ \{ B \in X \mid \langle e_4 \rangle \subset B \} \,.
    \end{split}
  \]
  Given $V \in \Gamma_1(X_\Pa, X^\Pb)$, we can find $A \in X_\Pa$ and $B \in
  X^\Pb$ such that
  \[
    0 \, \neq \, A \, \cap \, B \, \subset \, V \,
    \subset \, A+B \, \neq \, \C^4 \,.
  \]
  Since $V$ and $\langle e_1,e_4 \rangle$ are both contained in $A+B$, we obtain
  $V \cap \langle e_1,e_4 \rangle \neq 0$. This proves the claim, since
  $\Gamma_1(X_\Pa, X^\Pb)$ is a divisor in $X$.

  Set $Y_1 = \Fl(1,3;4)$, $Z_1 = \Fl(4)$, and let $p_1 : Z_1 \to X$ and $q_1 :
  Z_1 \to Y_1$ be the projections. We have $q_1 p_1^{-1}(X_\Pa) = (Y_1)_{3142} =
  w_0.Y_1^{2143}$ and $q_1 p_1^{-1}(X^\Pb) = Y_1^{1243}$, so it follows from
  Monk's formula that
  \[
    [Y_1(X_\Pa, X^\Pb)] \, = \, [Y_1^{2143}] \cdot [Y_1^{1243}] \, = \,
    [Y_1^{3142}] + [Y_1^{2341}] \,.
  \]
  We deduce that $Y_1(X_\Pa, X^\Pb)$ is not a Schubert variety in $Y_1$.
\end{example}

\begin{remark}\label{remark:seidel_qh}%
  Let $M = G/P_M$ be any flag variety of $G$. Recall that $H_2(M,\Z)$ can be
  identified with the coroot lattice of $G$ modulo the coroot lattice of $P_M$,
  by identifying each curve class $[M_{s_\be}]$ with the simple coroot
  $\be^\vee$ (see e.g.\ \cite[\S2]{buch.mihalcea:curve}). Let $u \in W$, $w \in
  W^\comin$, and let $\be \in \Delta \cap I(w)$ be the cominuscule simple root
  defining the cominuscule flag variety corresponding to $w$. Set $d =
  \om_\be^\vee - u^{-1}.\om_\be^\vee \in H_2(M,\Z)$, where $\om^\vee_\be$ is the
  fundamental coweight dual to $\be$. It was proved in
  \cite{belkale:transformation, chaput.manivel.ea:affine} that the identity
  \[
    [M^w] \star [M^u] = q^d [M^{w u}]
  \]
  holds in the small quantum cohomology ring $\QH(M)$. This is consistent with
  the following conjecture.
\end{remark}

\begin{conj}\label{conj:seidel-nbhd}%
  Let $M = G/P_M$ be any flag variety. For $u \in W$, $w \in W^\comin$, $I(w)
  \cap \Delta = \{\be\}$, and $d = \om_\be^\vee - u^{-1}.\om_\be^\vee \in
  H_2(M,\Z)$, we have
  \[
    \Gamma_d(M_{w_0 w}, M^u) = w^{-1}.M^{w u} \,.
  \]
\end{conj}

This conjecture follows from \Proposition{homschub} when $d=0$, from
\Lemma{gammapt} when $M$ is cominuscule and $w = w_0^M$, and from
\cite[Cor.~4.6]{li.liu.ea:seidel} when $M$ is a Grassmannian of type A and
$[M^w]$ is a special Seidel class. In response to this paper, it was proved in
\cite{tarigradschi:curve*1} that \Conjecture{seidel-nbhd} is true for all flag
varieties of type A, and the general conjecture follows from the special case
where $P_M$ is a maximal parabolic subgroup. More recently, the conjecture has
been proved in \cite{buch.chaput.ea:equivariant}.


\section{Quantum shapes}\label{sec:qposet}%

Let $X = G/P_X$ be a cominuscule flag variety. An infinite partially ordered set
$\wh\cP_X$ extending $\cP_X$ was constructed in
\cite{buch.chaput.ea:positivity}, such that elements of the set $\cB = \{ q^d
[X^u] \mid u \in W^X,\, d \in \Z \}$ correspond to order ideals in $\wh\cP_X$
that we call \emph{quantum shapes}. Isomorphic partially ordered sets were
constructed in \cite{hagiwara:minuscule*2, postnikov:affine,
green:combinatorics*1}. Products of Seidel classes with arbitrary Schubert
classes have simple combinatorial descriptions in terms of quantum shapes, and
our Pieri formulas also have their simplest expressions in terms of these
shapes. In this section we summarize the facts we need. Proofs of our claims and
more details can be found in \cite[\S7]{buch.chaput.ea:positivity}. Some claims
are justified by \Proposition{qposet} proved at the end of this section.

\targetsec{qbruhat}{%
Recall that $\cB$ is a $\Z$-basis of $\QH(X)_q$. Define a partial order on $\cB$
by
\[
  q^e [X^v] \leq q^d [X^u]
  \ \ \Longleftrightarrow \ \
  \Gamma_{d-e}(X_u,X^v) \neq \emptyset \,.
\]
The condition $\Gamma_{d-e}(X_u,X^v) \neq \emptyset$ says that some rational
curve in $X$ of degree at most $d-e$ intersects both $X_u$ and $X^v$.
Equivalently, $q^e [X^v] \leq q^d [X^u]$ holds if and only if $q^d [X^u]$ occurs
with non-zero coefficient in the expansion of $q^e[X^v] \star q^{d'}[X^w]$ in
$\QH(X)_q$, for some $w \in W^X$ and $d' \geq 0$
\cite[\S7.2]{buch.chaput.ea:positivity}. The following was proved in
\cite[Thm.~7.8]{buch.chaput.ea:positivity}.}

\begin{thm}\label{thm:interval}%
  Let $u, v \in W^X$ and $d \in\Z$. The power $q^d$ occurs in $[X^u] \star
  [X^v]$ if and only if $[X^v] \leq q^d [X_u] \leq [\pt] \star [X^v]$.
\end{thm}

\begin{cor}\label{cor:dminmax}%
  Assume that $u, u', v, v' \in W^X$ satisfy $u' \leq u$ and $v' \leq v$. Then
  $\dmin(u',v') \leq \dmin(u,v)$ and $\dmax(u',v') \leq \dmax(u,v)$.
\end{cor}
\begin{proof}
  Set $d = \dmin(u,v)$. Then $[X^{v'}] \leq [X^v] \leq q^d[X_u] \leq
  q^d[X_{u'}]$. Using that $[X^{u'}] \star [X^{v'}] \neq 0$, this shows that
  $\dmin(u',v') \leq d$. Similarly, if we set $d = \dmax(u',v')$, then $q^d[X_u]
  \leq q^d[X_{u'}] \leq [\pt]\star[X^{v'}] \leq [\pt]\star[X^v]$ and
  $[X^u]\star[X^v] \neq 0$ implies that $d \leq \dmax(u,v)$, as required.
\end{proof}

The following special case is useful for showing that a quantum product $[X^u]
\star [X^v]$ has only classical terms.

\begin{cor}\label{cor:qfree}
  Let $u, v \in W^X$. Assume that $u \leq w$ and $v \leq w_0 w$ for some
  $w \in W^\comin$. Then $\dmax(u,v) = 0$.
\end{cor}
\begin{proof}
  This follows from \Corollary{dminmax} and condition (S6) of
  \Theorem{seidelcrit}.
\end{proof}

\targetsec{qposet}{%
The partially ordered set $\cB$ is a distributive lattice by
\cite[Prop.~7.10]{buch.chaput.ea:positivity}. Let $\wh\cP_X \subset \cB$ be the
subset of all join-irreducible elements. These elements will be called
\emph{boxes}. Define a \emph{quantum shape} in $\wh\cP_X$ to be any non-empty
proper lower order ideal $\la \subset \wh\cP_X$. A quantum shape will also be
called a \emph{shape} when it cannot be misunderstood to be a classical shape in
$\cP_X$. A \emph{skew shape} in $\wh\cP_X$ is the difference $\la/\mu := \la
\ssm \mu$ between two shapes $\mu \subset \la \subset \wh\cP_X$. All shapes in
$\wh\cP_X$ are infinite, and all skew shapes in $\wh\cP_X$ are finite. Given
$q^d [X^u] \in \cB$, define
\[
  I(q^d[X^u]) = \{ \wh\al \in \wh\cP_X \mid \wh\al \leq q^d[X^u] \} \,.
\]
Notice that if $q^d[X^u] \in \wh\cP_X$, then $q^d[X^u]$ is the unique maximal
box of $I(q^d[X^u])$. By \cite[Thm.~7.13]{buch.chaput.ea:positivity}, the map
$I$ is an order isomorphism of $\cB$ with the set of all shapes in $\wh\cP_X$,
where shapes are ordered by inclusion. For any shape $\la \subset \wh\cP_X$ we
will write $\cO^\la = q^d \cO^u$, where $q^d [X^u] \in \cB$ is the unique
element with shape $I(q^d [X^u]) = \la$.}

\targetsec{embed}{%
Given $\al \in \cP_X$, define $\xi(\al) \in W^X$ by $I(\xi(\al)) = \{ \al' \in
\cP_X \mid \al' \leq \al \}$. Then the quantum shape $I([X^{\xi(\al)}]) \subset
\wh\cP_X$ contains a unique maximal box $\tau(\al)$ distinct from $1 \in \cB$,
the identity element of $\QH(X)$. The map $\tau : \cP_X \to \wh\cP_X$ is an
order isomorphism of $\cP_X$ onto an interval in $\wh\cP_X$ by
\cite[Thm.~7.13]{buch.chaput.ea:positivity}. We identify $\cP_X$ with the image
$\tau(\cP_X) \subset \wh\cP_X$. Given a classical shape $\la \subset \cP_X$, we
will abuse notation and also use $\la$ to denote the corresponding quantum shape
$I([X^\la]) = \tau(\la) \cup I(1) \subset \wh\cP_X$, see
\Proposition{qposet}(c). Both of these shapes define the same class $\cO^\la \in
\QK(X)$.}

\targetsec{shift}{%
Quantum multiplication by any Seidel class $\sigma = q^d [X^w]$ in $\QH(X)_q$
defines an order automorphism of $\cB$, which restricts to an order automorphism
of $\wh\cP_X$. Since $1 \in \wh\cP_X$ by \Proposition{qposet}(a), it follows
that all Seidel classes belong to $\wh\cP_X$. Given any shape $\la \subset
\wh\cP_X$, we define a new quantum shape by $\sigma \star \la = \{ \sigma \star
\wh\al \mid \wh\al \in \la \}$. We then have
\[
  \cO^{I(\sigma)} \star \cO^\la \,=\, \cO^{\sigma \star \la}
\]
in $\QK(X)_q$, where $\cO^{I(\sigma)} = q^d\cO^w$ is the Seidel class in
$\QK(X)_q$ corresponding to $\sigma$. The action of Seidel classes on $\wh\cP_X$
therefore determines arbitrary products with Seidel classes in $\QH(X)_q$ and
$\QK(X)_q$. For multiplication by powers of $q$, we use the notation $\la[d] =
q^d \star \la = \{ q^d \star \wh\al \mid \wh\al \in \la \}$, so that
$\cO^{\la[d]} = q^d \cO^\la$, see \Example{gr:shift}, \Example{og:shift}, and
\Example{lg:shift}. The shifting operations on shapes in $\cP_X$ (see
\cite[\S6.2]{buch.chaput.ea:positivity}) are then given by $\la(d) = \la[d] \cap
\cP_X$ (when $\la \subset \cP_X$ is identified with the quantum shape $\la \cup
I(1) \subset \wh\cP_X$).}

\begin{example}\label{example:seidel-trans}%
The following figures show the partially ordered set $\wh\cP_X$ for the quadrics
of dimensions 7 and 12, as well as the exceptional cominuscule flag varieties.
Each set has the west-to-east order, where any node is covered by the nodes
immediately northeast, east, or southeast of it. The elements of $\cP_X$ are
colored gray. Seidel classes are represented by lines marking the eastern
borders of their quantum shapes. We use $\pP$ to denote the point class, and
$\sigma$ and $\sigma'$ are used to represent Seidel classes in $H^*(X,\Z)$ that
are not in the subgroup of $\QH(X)_q^\times$ generated by $\pP$ and $q$.
Multiplication by any Seidel class corresponds to the rigid transformation of
$\wh\cP_X$ that moves the border of $1$ to the border of the Seidel class. This
rigid transformation is a horizontal translation, possibly combined with a
reflection in a horizontal line.
\medskip

$Q^7$:

\begin{tikzpicture}[x=.4cm,y=.4cm]
  \def\oxs#1#2#3{\foreach \x in {0,...,#3} {\draw [fill=lightgray] ($ #1 + \x*#2 $) circle (.33);}}
  \def\qxs#1#2#3{\foreach \x in {0,...,#3} {\draw ($ #1 + \x*#2 $) circle (.33);}}
  \qxs{(0,0)}{(1,0)}{4} \qxs{(4.7,.7)}{(0,-1.4)}{1}
  \qxs{(5.4,0)}{(1,0)}{4} \qxs{(10.1,.7)}{(0,0)}{0} \oxs{(10.1,-.7)}{(0,0)}{0}
  \oxs{(10.8,0)}{(1,0)}{4} \oxs{(15.5,.7)}{(0,0)}{0} \qxs{(15.5,-.7)}{(0,0)}{0}
  \qxs{(16.2,0)}{(1,0)}{4} \qxs{(20.9,.7)}{(0,-1.4)}{1}
  \qxs{(21.6,0)}{(1,0)}{4}
  \draw [thick] (3.3,1.4) node[above] {$q^{-1}$} -- ++(2.1,-2.1);
  \draw [thick] (8.7,1.4) node[above] {$q^{-1}\pP$} -- ++(2.1,-2.1);
  \draw [thick] (14.1,1.4) node[above] {$q$} -- ++(2.1,-2.1);
  \draw [thick] (19.5,1.4) node[above] {$q\pP$} -- ++(2.1,-2.1);
  \draw [thick] (3.3,-1.4) node[below] {$q^{-2}\pP$} -- ++(2.1,2.1);
  \draw [thick] (8.7,-1.4) node[below] {$1$} -- ++(2.1,2.1);
  \draw [thick] (14.1,-1.4) node[below] {$\pP$} -- ++(2.1,2.1);
  \draw [thick] (19.5,-1.4) node[below] {$q^2$} -- ++(2.1,2.1);
\end{tikzpicture}
\medskip

$Q^{12}$:

\begin{tikzpicture}[x=.4cm,y=.4cm]
  \def\oxs#1#2#3{\foreach \x in {0,...,#3} {\draw [fill=lightgray] ($ #1 + \x*#2 $) circle (.33);}}
  \def\qxs#1#2#3{\foreach \x in {0,...,#3} {\draw ($ #1 + \x*#2 $) circle (.33);}}
  \qxs{(0,0)}{(1,0)}{3} \qxs{(3.7,.7)}{(0,-1.4)}{1}
  \qxs{(4.4,0)}{(1,0)}{3} \qxs{(8.1,.7)}{(0,0)}{0} \oxs{(8.1,-.7)}{(0,0)}{0}
  \oxs{(8.8,0)}{(1,0)}{3} \oxs{(12.5,.7)}{(0,-1.4)}{1}
  \oxs{(13.2,0)}{(1,0)}{3} \oxs{(16.9,.7)}{(0,0)}{0} \qxs{(16.9,-.7)}{(0,0)}{0}
  \qxs{(17.6,0)}{(1,0)}{3} \qxs{(21.3,.7)}{(0,-1.4)}{1}
  \qxs{(22.0,0)}{(1,0)}{3}
  \draw [thick] (2.3,1.4) node[above] {$q^{-1}\sigma$} -- ++(2.1,-2.1);
  \draw [thick] (6.7,1.4) node[above] {$q^{-1}\pP$} -- ++(2.1,-2.1);
  \draw [thick] (11.1,1.4) node[above] {$\sigma'$} -- ++(2.1,-2.1);
  \draw [thick] (15.5,1.4) node[above] {$q$} -- ++(2.1,-2.1);
  \draw [thick] (19.9,1.4) node[above] {$q\sigma$} -- ++(2.1,-2.1);
  \draw [thick] (2.3,-1.4) node[below] {$q^{-1}\sigma'$} -- ++(2.1,2.1);
  \draw [thick] (6.7,-1.4) node[below] {$1$} -- ++(2.1,2.1);
  \draw [thick] (11.1,-1.4) node[below] {$\sigma$} -- ++(2.1,2.1);
  \draw [thick] (15.5,-1.4) node[below] {$\pP$} -- ++(2.1,2.1);
  \draw [thick] (19.9,-1.4) node[below] {$q\sigma'$} -- ++(2.1,2.1);
\end{tikzpicture}
\medskip

$E_6/P_6$:

\begin{tikzpicture}[x=.4cm,y=.4cm,rotate=45]
  \def\oxs#1#2#3{\foreach \x in {0,...,#3} {\draw [fill=lightgray] ($ #1 + \x*#2 $) circle (.33);}}
  \def\qxs#1#2#3{\foreach \x in {0,...,#3} {\draw ($ #1 + \x*#2 $) circle (.33);}}
  \qxs{(1,1)}{(1,0)}{2}
  \qxs{(0,0)}{(1,0)}{3}
  \qxs{(2,-1)}{(1,0)}{3}
  \qxs{(2,-2)}{(1,0)}{3}
  \qxs{(4,-3)}{(1,0)}{3}
  \oxs{(4,-4)}{(1,0)}{3}
  \oxs{(6,-5)}{(1,0)}{3}
  \oxs{(6,-6)}{(1,0)}{3}
  \oxs{(8,-7)}{(1,0)}{3}
  \qxs{(8,-8)}{(1,0)}{3}
  \qxs{(10,-9)}{(1,0)}{3}
  \qxs{(10,-10)}{(1,0)}{3}
  \qxs{(12,-11)}{(1,0)}{3}
  \qxs{(12,-12)}{(1,0)}{2}
  \draw [thick] (2.5,2.5) node[above] {$q^{-1}$} -- (2.5,-2.5);
  \draw [thick] (4.5,0.5) node[above] {$q^{-1}\sigma$} -- (4.5,-4.5);
  \draw [thick] (6.5,-1.5) node[above] {$q^{-1}\pP$} -- (6.5,-6.5);
  \draw [thick] (8.5,-3.5) node[above] {$q$} -- (8.5,-8.5);
  \draw [thick] (10.5,-5.5) node[above] {$q\sigma$} -- (10.5,-10.5);
  \draw [thick] (12.5,-7.5) node[above] {$q\pP$} -- (12.5,-12.5);
  \draw [thick] (0.5,-1.5) node[below] {$q^{-2}\pP$} -- (5.5,-1.5);
  \draw [thick] (2.5,-3.5) node[below] {$1$} -- (7.5,-3.5);
  \draw [thick] (4.5,-5.5) node[below] {$\sigma$} -- (9.5,-5.5);
  \draw [thick] (6.5,-7.5) node[below] {$\pP$} -- (11.5,-7.5);
  \draw [thick] (8.5,-9.5) node[below] {$q^2$} -- (13.5,-9.5);
\end{tikzpicture}
\medskip

$E_7/P_7$:

\begin{tikzpicture}[x=.4cm,y=.4cm,rotate=45]
  \def\oxs#1#2#3{\foreach \x in {0,...,#3} {\draw [fill=lightgray] ($ #1 + \x*#2 $) circle (.33);}}
  \def\qxs#1#2#3{\foreach \x in {0,...,#3} {\draw ($ #1 + \x*#2 $) circle (.33);}}
  \qxs{(2,1)}{(1,0)}{3}
  \qxs{(1,0)}{(1,0)}{4}
  \qxs{(3,-1)}{(1,0)}{2}
  \qxs{(4,-2)}{(1,0)}{2}
  \qxs{(4,-3)}{(1,0)}{5}
  \oxs{(4,-4)}{(1,0)}{5}
  \oxs{(7,-5)}{(1,0)}{2}
  \oxs{(8,-6)}{(1,0)}{2}
  \oxs{(8,-7)}{(1,0)}{4}
  \oxs{(8,-8)}{(1,0)}{4}
  \oxs{(11,-9)}{(1,0)}{1}
  \oxs{(12,-10)}{(0,-1)}{2}
  \qxs{(13,-7)}{(0,-1)}{2}
  \qxs{(13,-10)}{(1,0)}{1}
  \qxs{(13,-11)}{(1,0)}{4}
  \qxs{(13,-12)}{(1,0)}{4}
  \qxs{(15,-13)}{(1,0)}{2}
  \qxs{(16,-14)}{(1,0)}{2}
  \qxs{(16,-15)}{(1,0)}{4}
  \qxs{(16,-16)}{(1,0)}{3}
  \draw [thick] (4.5,2.5) node[above] {$q^{-2}\pP$} -- (4.5,-4.5);
  \draw [thick] (8.5,-1.5) node[above] {$q^{-1}\pP$} -- (8.5,-8.5);
  \draw [thick] (12.5,-5.5) node[above] {$\pP$} -- (12.5,-12.5);
  \draw [thick] (16.5,-9.5) node[above] {$q\pP$} -- (16.5,-16.5);
  \draw [thick] (2.5,-3.5) node[below] {$1$} -- (9.5,-3.5);
  \draw [thick] (6.5,-7.5) node[below] {$q$} -- (13.5,-7.5);
  \draw [thick] (10.5,-11.5) node[below] {$q^2$} -- (17.5,-11.5);
\end{tikzpicture}
\end{example}

The following results will be used to describe the quantum posets $\wh\cP_X$ of
classical Grassmannians in the next three sections.

\begin{prop}\label{prop:qposet}%
  Let $X = G/P_X$ be a cominuscule flag variety.\smallskip

  \noin{\rm(a)} We have $\wh\cP_X \cap H^*(X) = \{ \tau(\al) \mid \al \in \cP_X
  \ssm I(z_1^\vee) \} \cup \{1\}$.\smallskip

  \noin{\rm(b)} The map $(\wh\cP_X \cap H^*(X)) \times \Z \to \wh\cP_X$ defined
  by $([X^u],d) \mapsto q^d[X^u]$ is bijective.\smallskip

  \noin{\rm(c)} We have $\tau(\cP_X) = I([1.P_X]) \ssm I(1) \subset \wh\cP_X$.
\end{prop}
\begin{proof}
  Parts (a) and (b) follow from \cite[Def.~7.11 and
  Thm.~7.13]{buch.chaput.ea:positivity}, noting that $\tau(\al) =
  [X^{\xi(\al)}]$ holds if and only if $\al \in \cP_X \ssm I(z_1^\vee)$. Let
  $\al \in \cP_X$. Then $\tau(\al) \leq \tau(\rho) = [1.P_X]$, where $\rho \in
  \cP_X$ is the highest root. Since $[X^{\xi(\al)}] = \tau(\al) \cup 1$ by
  \cite[Thm.~7.13(a)]{buch.chaput.ea:positivity}, and $[X^{\xi(\al)}] \neq 1$,
  we obtain $\tau(\al) \not\leq 1$. This proves that $\tau(\al) \in I([1.P_X])
  \ssm I(1)$. Given $\wh\al \in I([1.P_X]) \ssm I(1)$, we may write $\wh\al =
  q^{-d}[X^{\xi(\al')}]$ for some $\al' \in \cP_X \ssm I(z_1^\vee)$ and $d \in
  \Z$. The condition $\wh\al \leq [1.P_X]$ implies $d \geq 0$, and $\wh\al
  \not\leq 1$ implies that $\al' \notin I(z_d)$ by
  \cite[Lemma~7.12]{buch.chaput.ea:positivity}. It therefore follows from
  \cite[Prop.~5.9(a) and Cor.~5.11]{buch.chaput.ea:positivity} that $\al = (z_1
  s_\ga)^{-d}.\al' \in \cP_X$, and from
  \cite[Def.~7.11]{buch.chaput.ea:positivity} that $\tau(\al) = \wh\al$.
  This proves part (c).
\end{proof}

\begin{lemma}\label{lemma:qposet-covering}%
  Let $\al$ be any non-minimal box in $\cP_X \ssm I(z_1^\vee)$, and let $\wh\al'
  \lessdot \tau(\al)$ be a covering in $\wh\cP_X$. Then $\wh\al' = \tau(\al')$
  for some $\al' \in \cP_X$, such that $\al' \lessdot \al$ is a covering in
  $\cP_X$.
\end{lemma}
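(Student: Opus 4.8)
The plan is to read off the principal order ideal of $\tau(\al)$ in $\wh\cP_X$ from \Proposition{qposet}, and then use the hypothesis — which I read as saying that $\al$ is not a minimal element of the sub-poset $\cP_X \ssm I(z_1^\vee)$ — to discard the one ``extra'' candidate lower cover. First set $\la_\al = \{\al' \in \cP_X : \al' \leq \al\}$, the principal ideal of $\al$ in $\cP_X$, so that $I(\xi(\al)) = \la_\al$. Since $\al \in \cP_X \ssm I(z_1^\vee)$, the proof of \Proposition{qposet}(a) gives $\tau(\al) = [X^{\xi(\al)}]$, whence the quantum shape of $\tau(\al)$ is
\[
  I(\tau(\al)) \;=\; I\bigl([X^{\xi(\al)}]\bigr) \;=\; \tau(\la_\al) \cup I(1) \,,
\]
a disjoint union because $\tau(\cP_X) \cap I(1) = \emptyset$ by \Proposition{qposet}(c). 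As $\tau$ is injective, every covering $\wh\al' \lessdot \tau(\al)$ in $\wh\cP_X$ then satisfies $\wh\al' \in I(\tau(\al)) \ssm \{\tau(\al)\} = \tau(\la_\al \ssm \{\al\}) \cup I(1)$, so it only remains to rule out $\wh\al' \in I(1)$ and to identify the coverings among $\tau(\la_\al \ssm \{\al\})$.

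The crux is excluding $\wh\al' \in I(1)$, i.e.\ $\wh\al' \leq 1$; this is exactly where the hypothesis is used. Because $\al$ is not minimal in $\cP_X \ssm I(z_1^\vee)$, I can pick a box $\be_0 \in \cP_X \ssm I(z_1^\vee)$ with $\be_0 < \al$. Again by the proof of \Proposition{qposet}(a), $\tau(\be_0) = [X^{\xi(\be_0)}]$, a class of $q$-degree $0$; since $1 = q^0[X^{\id}]$ and $X^{\id} = X$, one has $q^0[X^u] \geq 1$ in $\cB$ for all $u \in W^X$ (because $\Gamma_0(X_u, X^{\id}) = X_u \cap X = X_u \neq \emptyset$), hence $1 \leq \tau(\be_0)$, while $1 \neq \tau(\be_0)$ as $\tau(\be_0) \in \tau(\cP_X)$ is disjoint from $I(1)$. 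Using that $\tau$ is an order embedding, $\be_0 < \al$ yields $\tau(\be_0) < \tau(\al)$, so if $\wh\al' \leq 1$ then $\wh\al' \leq 1 < \tau(\be_0) < \tau(\al)$ with $\tau(\be_0) \in \wh\cP_X$, contradicting that $\wh\al' \lessdot \tau(\al)$ is a covering. Therefore $\wh\al' \in \tau(\la_\al \ssm \{\al\})$, i.e.\ $\wh\al' = \tau(\al')$ for a unique $\al' \in \cP_X$ with $\al' < \al$.

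To finish, if there were $\be \in \cP_X$ with $\al' < \be < \al$, then $\tau(\al') < \tau(\be) < \tau(\al)$ in $\wh\cP_X$ with $\tau(\be) \in \tau(\cP_X) \subset \wh\cP_X$, again contradicting $\wh\al' = \tau(\al') \lessdot \tau(\al)$; hence $\al' \lessdot \al$ in $\cP_X$, as claimed. I expect the middle paragraph to be the only real content: the hypothesis there is essential, since when $\al$ \emph{is} a minimal element of $\cP_X \ssm I(z_1^\vee)$ the box $\tau(\al)$ typically covers $1 \in \wh\cP_X$ rather than a box $\tau(\al')$ — this already occurs for $X = \bP^n$, where $1$ is the unique lower cover of $\tau(\ga)$.
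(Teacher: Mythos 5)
Your proof is correct and follows the same approach as the paper's: use the non-minimality of $\al$ in $\cP_X\ssm I(z_1^\vee)$ to produce a buffer box $\tau(\be_0)$ with $1<\tau(\be_0)<\tau(\al)$, which shows $\wh\al'\not\leq 1$, then apply \Proposition{qposet}(c) to get $\wh\al'=\tau(\al')\in\tau(\cP_X)$, and finally pull the covering relation back along the order embedding $\tau$. You have simply made explicit the implicit steps in the paper's terse one-line argument.
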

\begin{proof}
  Since $\al$ is not minimal in $\cP_X \ssm I(z_1^\vee)$, it follows from
  \Proposition{qposet}(a) that $\wh\al' \not\leq 1$, hence $\wh\al' =
  \tau(\al')$ for some $\al' \in \tau(\cP_X)$ by \Proposition{qposet}(c).
  \Proposition{qposet}(c) also implies that $\al' \lessdot \al$ is a covering in
  $\cP_X$, as required.
\end{proof}


\section{Pieri formula for Grassmannians of type A}\label{sec:grass}%

\subsection{Quantum shapes}\label{sec:gr:poset}%

\targetsec{grass}{%
Let $X = \Gr(m,n)$ be the Grassmannian of $m$-dimensional vector subspaces of
$\C^n$. The quantum cohomology ring $\QH(X)$ was computed by Witten
\cite{witten:verlinde} and Bertram \cite{bertram:quantum}, and a Pieri formula
for the ordinary $K$-theory ring $K(X)$ was obtained by Lenart
\cite{lenart:combinatorial}. The Grassmannian $X$ is minuscule of type
$A_{n-1}$, and the corresponding partially ordered set $\cP_X$ is a rectangle of
boxes with $m$ rows and $n-m$ columns, endowed with the northwest-to-southeast
order discussed below.}\medskip
\[
  \cP_X \,=\, \
  \tableau{8}{
    {}&{}&{}&{}&{}&{}&{}\\
    {}&{}&{}&{}&{}&{}&{}\\
    {}&{}&{}&{}&{}&{}&{}\\
    {}&{}&{}&{}&{}&{}&{}}
\]
\smallskip

\targetsec{rectpart}{%
Each shape $\la \subset \cP_X$ can be identified with a partition
\[
  \la = (\la_1 \geq \la_2 \geq \dots \geq \la_m \geq 0)
\]
with $\la_1 \leq n-m$, where $\la_i$ is the number of boxes in the $i$-th row of
$\la$. If $\la \subset \cP_X$ consists of a single row of boxes, then $\la$ will
also be identified with the integer $p = |\la|$. The \emph{special Schubert
classes} in $K(X)$ are the classes $\cO^p$ for $1 \leq p \leq n-m$. Another
family of special classes consists of $\cO^{(1)^r}$ for $1 \leq r \leq m$, where
$(b)^a$ denotes a rectangle with $a$ rows and $b$ columns.}

\targetsec{cylinder}{%
Let $\Z^2$ denote a grid of boxes $(i,j)$ that fill the plane, where the row
number $i$ increases from north to south, and the column number $j$ increases
from west to east. We endow $\Z^2$ with the \emph{northwest-to-southeast}
partial order, defined by $(i',j') \leq (i,j)$ if and only if $i' \leq i$ and
$j' \leq j$. The quotient $\Z^2/\Z(m,m-n)$ is ordered by $(i',j') + \Z(m,m-n) \
\leq \ (i,j) + \Z(m,m-n)$ if and only if $(i',j') \leq (i+am, j+am-an)$ for some
$a \in \Z$. The cylinder $\Z^2/\Z(m,m-n)$ was used to study the quantum
cohomology ring $\QH(X)$ in \cite[\S3]{postnikov:affine}. This partially ordered
set was also defined in \cite[\S8]{hagiwara:minuscule*2}.}

\begin{prop}\label{prop:gr:qposet}%
  Let $X = \Gr(m,n)$ and set $\sigma = [X^{n-m}]$ and $\tau =
  [X^{(1)^m}]$.\smallskip

  \noin{\rm(a)} The group of Seidel classes in $\QH(X)_q^\times$ is generated by
  $\sigma$ and $\tau$.\smallskip

  \noin{\rm(b)} We have $\sigma^m = \tau^{n-m} = [1.P_X]$ and
  $\sigma \star \tau = q$ in $\QH(X)$.\smallskip

  \noin{\rm(c)} The map $\phi : \Z^2/\Z(m,m-n) \to \wh\cP_X$ defined by
  $\phi(i,j) = \sigma^i \star \tau^j \star [1.P_X]^{-1}$ is an order
  isomorphism, which identifies $\cP_X$ with the rectangle $[1,m] \times
  [1,n-m]$.\smallskip

  \noin{\rm(d)} The actions of $\sigma$ and $\tau$ on $\wh\cP_X$ are determined
  by $\sigma \star \phi(i,j) = \phi(i+1,j)$ and $\tau \star \phi(i,j) =
  \phi(i,j+1)$.
\end{prop}
\begin{proof}
  Noting that $\sigma = [X^{w_0^F}]$ and $\tau = [X^{w_0^{F'}}]$, where $F =
  \Gr(1,n)$ and $F' = \Gr(n-1,n)$, it follows that $\sigma$ and $\tau$ are
  Seidel classes in $\QH(X)$. Part (b) follows from Bertram's quantum Pieri
  formula \cite{bertram:quantum}, and is also an easy consequence of
  \Corollary{seidel}. These results also show that
  \[
    \sigma^i = [X^{(n-m)^i}]
    \text{ \ \ \ and \ \ \ }
    \tau^j = [X^{(j)^m}]
  \]
  for $1 \leq i \leq m$ and $1 \leq j \leq n-m$. Part (a) follows from this,
  noting that $\sigma$ and $\tau$ generate $n$ distinct Seidel classes in
  $H^*(X)$.

  The map $\phi$ is well defined by part (b), and order-preserving since, if
  $(i',j') \leq (i,j)$, then $\phi(i,j)$ occurs in the expansion of the product
  $\phi(i',j') \star (\sigma^{i-i'} \star \tau^{j-j'})$. The maximal box of
  $(n-m)^i$ is the $i$-th box of the rightmost column of $\cP_X$, and the
  maximal box of $(j)^m$ is the $j$-th box of the bottom row of $\cP_X$. Since
  these maximal boxes include all boxes of $\cP_X \ssm I(z_1^\vee)$, it follows
  from \Proposition{qposet} that $\phi$ is surjective. If $\sigma^i \star \tau^j
  = 1$ in $\QH(X)_q$, then since $\sigma$ has order $n$ and inverse $\tau$ in
  $\QH(X)/(q-1)$, we must have $j = i - an$ for some $a \in \Z$. Since $\sigma^i
  \star \tau^j = 1$ has degree $(n-m)i + m j = 0$ in $\QH(X)_q$, we obtain
  $(i,j) = a(m,m-n)$. This implies that $\phi$ is bijective. To show that $\phi$
  is an order isomorphism, we must show that, if $\wh\al' \lessdot \wh\al$ is a
  covering in $\wh\cP_X$, then $\phi^{-1}(\wh\al') < \phi^{-1}(\wh\al)$. If $X =
  \bP^1$, then this follows because $\Z^2/\Z(1,-1)$ is totally ordered, so
  assume that $n \geq 3$. Using that $\phi$ is surjective and quantum
  multiplication by the Seidel classes $\sigma$ and $\tau$ define order
  automorphisms of $\wh\cP_X$, we may assume that $\wh\al = [1.P_X] =
  \phi(m,n-m)$ is the maximal box of $\cP_X \subset \wh\cP_X$. We then deduce
  from \Lemma{qposet-covering} that $\wh\al' = \phi(m-1,n-m)$ or $\wh\al' =
  \phi(m,n-m-1)$, and in either case we obtain $\phi^{-1}(\wh\al') <
  \phi^{-1}(\wh\al)$. Noting that $\phi(m,n-m) = [1.P_X]$ and $\phi(m,0) =
  \phi(0,n-m) = 1$, it follows from \Proposition{qposet}(c) that $\cP_X$ is
  identified with the rectangle $[1,m] \times [1,n-m]$. This proves part (c).
  Part (d) follows from the definition of $\phi$, which completes the proof.
\end{proof}

It follows from \Proposition{gr:qposet} that any quantum shape $\la \subset
\wh\cP_X$ can be represented by a \emph{border} in $\Z^2$ that is invariant
under translation by the vector $(m,m-n)$; the inverse image of $\la$ in $\Z^2$
consists of the boxes northwest of this border. The shifted shape $\la[d] = q^d
\star \la$ is obtained by moving the border of $\la$ by $d$ diagonal steps in
southeast direction.

\begin{example}\label{example:gr:shift}%
  Let $X = \Gr(2,5)$ and set $\sigma = [X^3]$, $\tau = [X^{(1,1)}]$, and $\pP =
  [1.P_X]$. The first figure shows the rectangle $[0,3] \times [0,4] \subset
  \Z^2$, with each box $(i,j)$ labeled by $\phi(i,j) = \sigma^i \star \tau^j
  \star \pP^{-1}$ (rewritten using the relations $\sigma^2 = \tau^3 = \pP$ and
  $\sigma\tau = q$). The framed $2 \times 3$ rectangle can be identified with
  $\cP_X$. The second figure shows the border of a quantum shape $\la \subset
  \wh\cP_X$ together with the border of the shifted shape $\la[1] = q \star
  \la$.\smallskip

  \begin{center}
    \begin{tikzpicture}[x=7.5mm,y=7.5mm]
      \begin{scope}
      \draw [gray] (0,0) -- (5,0) -- (5,4) -- (0,4) -- cycle;
      \draw [gray] (0,1) -- (5,1) (0,2) -- (5,2) (0,3) -- (5,3);
      \draw [gray] (1,0) -- (1,4) (2,0) -- (2,4) (3,0) -- (3,4) (4,0) -- (4,4);
      \draw [very thick] (1,1) -- (4,1) -- (4,3) -- (1,3) -- cycle;
      \draw (0.5,3.5) node {$\pP^{-1}$};
      \draw (0.5,2.5) node {$\sigma^{-1}$};
      \draw (1.5,3.5) node {$\tau^{-2}$};
      \draw (0.5,1.5) node {$1$} (3.5,3.5) node {$1$};
      \draw (1.5,2.5) node {$\sigma^{\!-\!1}\tau$};
      \draw (2.5,3.5) node {$\tau^{-1}$};
      \draw (0.5,0.5) node {$\sigma$} (3.5,2.5) node {$\sigma$};
      \draw (1.5,1.5) node {$\tau$} (4.5,3.5) node {$\tau$};
      \draw (2.5,2.5) node {$\tau^{\!-\!1}\sigma$};
      \draw (1.5,.5) node {$q$} (4.5,2.5) node {$q$};
      \draw (2.5,1.5) node {$\tau^2$};
      \draw (2.5,0.5) node {$q\tau$};
      \draw (3.5,1.5) node {$\pP$};
      \draw (3.5,0.5) node {$q\tau^2$};
      \draw (4.5,1.5) node {$q\sigma$};
      \draw (4.5,0.5) node {$q\pP$};
      \end{scope}
      \begin{scope}[shift={(8,0)}]
      \draw [gray] (0,0) -- (5,0) -- (5,4) -- (0,4) -- cycle;
      \draw [gray] (0,1) -- (5,1) (0,2) -- (5,2) (0,3) -- (5,3);
      \draw [gray] (1,0) -- (1,4) (2,0) -- (2,4) (3,0) -- (3,4) (4,0) -- (4,4);
      \draw [very thick] (1,1) -- (4,1) -- (4,3) -- (1,3) -- cycle;
      \draw [red,line width=2.2pt] (-.5,0) -- (0,0) -- ++(0,1) -- ++(1,0)
      -- ++(0,1) -- ++(2,0) -- ++(0,1) -- ++(1,0) -- ++(0,1) -- ++(1.5,0);
      \draw [red,line width=2.2pt] (1,-.5) -- (1,0) -- ++(1,0)
      -- ++(0,1) -- ++(2,0) -- ++(0,1) -- ++(1,0) -- ++(0,1) -- ++(.5,0);
      \draw (2.47,2.4) node {$\la$};
      \draw (3.47,1.4) node {$\la[1]$};
      \end{scope}
    \end{tikzpicture}
  \end{center}
\end{example}

\begin{remark}\label{remark:cylshape}%
  Let $X = \Gr(m,n)$. The map from \Proposition{gr:qposet}(c) defines an
  order-preserving bijection $\phi : [1,m] \times \Z \to \wh\cP_X$. A non-empty
  proper lower order ideal $\la \subset [1,m] \times \Z$ can be represented by
  the decreasing sequence $(\la_1 \geq \dots \geq \la_m)$, where $\la_i \in \Z$
  is maximal such that $(i,\la_i) \in \la$. The image $\phi(\la)$ is a shape in
  $\wh\cP_X$ if and only if $\la_1 - \la_m \leq n-m$, and any shape in
  $\wh\cP_X$ has this form. In this case the corresponding basis element
  $q^d[X^\mu]$ is obtained by removing rim-hooks from $\la$, see
  \cite{bertram.ciocan-fontanine.ea:quantum}. Notice also that the bijection
  $\phi : [1,m] \times \Z \to \wh\cP_X$ is an order isomorphism if and only if
  $m=1$. Indeed, we have $\wh\cP_X \cong \Z$ when $X \cong \bP^{n-1}$ is
  projective space, whereas the cylinder $\wh\cP_X = \Z^2/\Z(m,m-n)$ is not
  isomorphic to a convex subset of $\Z^2$ for other Grassmannians.
\end{remark}

\subsection{Pieri formula}\label{sec:gr:pieri}%

\targetsec{binom}{%
Let $\theta \subset \wh\cP_X$ be a skew shape. A \emph{row} of $\theta$ means a
subset of the form $\theta \cap \phi(\{k\} \times \Z)$, where $k \in \Z$ and
$\phi$ is the map defined in \Proposition{gr:qposet}, and a \emph{column} of
$\theta$ is a subset of the form $\theta \cap \phi(\Z \times \{k\})$. The skew
shape $\theta \subset \wh\cP_X$ is called a \emph{horizontal strip} if each
column of $\theta$ contains at most one box. Let $r(\theta)$ denote the
number of non-empty rows in $\theta$. For $p \geq 1$ we define
\[
  \cA(\theta,p) = \begin{cases}
    (-1)^{|\theta|-p} \binom{r(\theta)-1}{|\theta|-p} &
    \text{if $\theta$ is a horizontal strip,} \\
    0 & \text{otherwise.}
  \end{cases}
\]
}

A Pieri formula for products of the form $\cO^p \star \cO^\la$ in $\QK(X)$ was
proved in \cite{buch.mihalcea:quantum}. We proceed to show that this formula is
an easy consequence of \Corollary{seidel}, Lenart's Pieri formula for $K(X)$
\cite{lenart:combinatorial}, and a bound on the $q$-degrees in quantum
$K$-theory products proved in \cite{buch.chaput.ea:positivity}.

\begin{thm}\label{thm:gr:qkpieri}%
  Let $X = \Gr(m,n)$, let $\la \subset \wh\cP_X$ be any quantum shape, and let
  $1 \leq p \leq n-m$. Then
  \[
    \cO^p \star \cO^\la = \sum_\nu \cA(\nu/\la,p)\, \cO^\nu
  \]
  holds in $\QK(X)_q$, where the sum is over all quantum shapes $\nu \subset
  \wh\cP_X$ containing $\la$.
\end{thm}
\begin{proof}
  Set $\tau = \cO^{(1)^m}$ and choose $k \in \Z$ maximal such that $\phi(m,k)
  \in \la$. By \Corollary{seidel} and \Proposition{gr:qposet} we have $\tau^{-k}
  \star \cO^\la = \cO^\mu$, where $\mu \subset \cP_X$ is a classical shape with
  $\mu_m = 0$. \Corollary{qfree} then implies that $\dmax(p,\mu) = 0$, so
  \cite[Thm.~8.3]{buch.chaput.ea:positivity} shows that $\cO^p \star \cO^\mu$
  agrees with the classical product $\cO^p \cdot \cO^\mu$ in $K(X)$. Notice
  that, if $\nu \supset \mu$ is any quantum shape such that $\nu/\mu$ is a
  horizontal strip, then $\nu$ is a classical shape. It therefore follows from
  \cite[Thm.~3.2]{lenart:combinatorial} that
  \[
    \cO^p \star \cO^\mu = \sum_\nu \cA(\nu/\mu,p)\, \cO^\nu
  \]
  holds in $\QK(X)$, where the sum is over all shapes $\nu \subset \wh\cP_X$
  containing $\mu$. Since quantum multiplication by $\tau^k$ defines a module
  automorphism of $\QK(X)$ and defines an order automorphism of $\wh\cP_X$, this
  identity is equivalent to the theorem.
\end{proof}

The following version of \Theorem{gr:qkpieri} is equivalent to the Pieri formula
for $\QK(X)$ proved in \cite{buch.mihalcea:quantum}.

\begin{cor}\label{cor:gr:qkpieri}%
  Let $\la \subset \cP_X$ be any shape and let $1 \leq p \leq n-m$. Then
  \[
    \cO^p \star \cO^\la = \sum_\mu \cA(\mu/\la,p)\, \cO^\mu
    + q \sum_\nu \cA(\nu[1]/\la,p)\, \cO^\nu
  \]
  holds in $\QK(X)$, where the first sum is over all shapes $\mu \subset \cP_X$
  containing $\la$, and the second sum is over all shapes $\nu \subset \cP_X$
  for which $\nu[1]$ contains $\la$.
\end{cor}
\begin{proof}
  This is a direct translation of \Theorem{gr:qkpieri}, using that $\cO^{\nu[1]}
  = q\,\cO^\nu$.
\end{proof}

\begin{example}
  \def\pl{\scriptscriptstyle+}%
  Let $X = \Gr(3,7)$. By \Remark{cylshape} we can represent a quantum shape $\la
  \subset \wh\cP_X$ by a decreasing sequence $\la = (\la_1 \geq \la_2 \geq
  \la_3)$ such that $\la_1-\la_3 \leq 4$. When $\la_3 \geq 0$, we will display
  $\la$ as a Young diagram with at most 3 rows. We will also identify $\la$ with
  the class $\cO^\la$ in $\QK(X)$. With this notation we have
  \[
    \cO^3 \, \star \ \tableau{7}{{}&{}&{}\\{}&{}&{}\\{}} \ \ = \ \
    \tableau{7}{{}&{}&{}&{\pl}\\{}&{}&{}\\{}&{\pl}&{\pl}}
    \ + \ \tableau{7}{{}&{}&{}&{\pl}&{\pl}\\{}&{}&{}\\{}&{\pl}}
    \ - \ \tableau{7}{{}&{}&{}&{\pl}&{\pl}\\{}&{}&{}\\{}&{\pl}&{\pl}} \ ,
  \]
  where added boxes are indicated by pluses. This is equivalent to
  \[
    \cO^3 \, \star \ \tableau{7}{{}&{}&{}\\{}&{}&{}\\{}} \ \ = \ \
    \tableau{7}{{}&{}&{}&{}\\{}&{}&{}\\{}&{}&{}}
    \ + \ q \ \tableau{7}{{}&{}\\{}}
    \ - \ q \ \tableau{7}{{}&{}\\{}&{}} \ .
  \]
  Notice that the shape
  \[
    \tableau{7}{{}&{}&{}&{\pl}&{\pl}&{\pl}\\{}&{}&{}\\{}&{\pl}}
  \]
  is not included, as the box added to the third row is in the same column of
  $\wh\cP_X$ as the rightmost box added to the first row.
\end{example}


\section{Pieri formula for maximal orthogonal Grassmannians}\label{sec:maxog}%

\subsection{Quantum shapes}\label{sec:og:poset}%

\targetsec{og}{%
Let $X = \OG(n,2n)$ be the maximal orthogonal Grassmannian, parametrizing one
component of the maximal isotropic subspaces of $\C^{2n}$ endowed with an
orthogonal bilinear form. The quantum cohomology ring $\QH(X)$ was computed in
\cite{kresch.tamvakis:quantum*1}, and a Pieri formula for the ordinary
$K$-theory ring $K(X)$ was obtained in \cite{buch.ravikumar:pieri}.}

The orthogonal Grassmannian $X$ is minuscule of type $D_n$. We identify the
simple roots of type $D_n$ with the vectors
\[
  \Delta = \{e_n-e_{n-1}, \dots, e_3-e_2, e_2-e_1, e_2+e_1 \} \,,
\]
where $\ga = e_1+e_2$ is the cominuscule simple root defining $X$. We then
obtain
\[
  \cP_X = \{ e_i+e_j \mid 1 \leq i < j \leq n \} \,,
\]
where the partial order is given by $e_{i'}+e_{j'} \leq e_i+e_j$ if and only if
$i'\leq i$ and $j' \leq j$. We represent $\cP_X$ as a staircase shape with $n-1$
rows, where $e_i+e_j$ is represented by the box in row $i$ and column $j$:
\[
  \cP_{\OG(6,12)} \, = \
  \tableau{8}{
    {}&{}&{}&{}&{}\\
      &{}&{}&{}&{}\\
      &  &{}&{}&{}\\
      &  &  &{}&{}\\
      &  &  &  &{}}
\]
Each shape $\la \subset \cP_X$ can be identified with a strict partition
\[
  \la = (\la_1 > \la_2 > \dots > \la_\ell > 0)
\]
with $\la_1 \leq n-1$, where $\la_i$ is the number of boxes in the $i$-th row of
$\la$. If $\la \subset \cP_X$ consists of a single row of boxes, then $\la$ will
also be identified with the integer $p = |\la|$. The special Schubert classes in
$K(X)$ are the classes $\cO^p$ for $1 \leq p \leq n-1$.

\targetsec{og:poset}{%
Define the set
\[
  \wb\cP_X = \{(i,j) \in \Z^2 \mid i < j < i+n \} \,,
\]
and give $\wb\cP_X$ the northwest-to-southeast order $(i',j') \leq (i,j)$ if and
only if $i' \leq i$ and $j' \leq j$. We represent $\wb\cP_X$ as an infinite set
of boxes $(i,j)$ in the plane, where the row number $i$ increases from north to
south, and the column number $j$ increases from west to east. Each row in
$\wb\cP_X$ contains $n-1$ boxes. The set $\cP_X$ will be identified with the
subset $\{(i,j) \in \Z^2 \mid 1 \leq i < j \leq n \} \subset \wb\cP_X$.\medskip}

\begin{center}
  \begin{tikzpicture}[x=.3cm,y=.3cm]
    \def\yy{-- ++(0,-1) -- ++(1,0)}
    \def\zz{-- ++(0,1) -- ++(-1,0)}
    \draw (0,5) -- (5,5) \yy\yy\yy\yy\yy\yy\yy\yy -- ++(0,-1)
    -- ++(-5,0) \zz\zz\zz\zz\zz\zz\zz\zz -- cycle;
    \draw[very thick] (2,3) -- (7,3) -- (7,-2) -- ++(-1,0)
    \zz\zz\zz\zz -- cycle;
    \node at (-3.5,0) {$\wb\cP_{\OG(6,12)} \, =$};
  \end{tikzpicture}
\end{center}
\smallskip

Recall the map $\tau : \cP_X \to \wh\cP_X$ from \Section{qposet}.

\begin{prop}\label{prop:og:qposet}%
  Let $X = \OG(n,2n)$.\smallskip

  \noin{\rm(a)} The group of Seidel classes in $\QH(X)_q^\times$ is generated by
  $[1.P_X]$ and $[X^{n-1}]$.\smallskip

  \noin{\rm(b)} We have $[X^{n-1}]^2 = q$ and $[1.P_X]^2 = [X^{n-1}]^n$ in
  $\QH(X)$.\smallskip

  \noin{\rm(c)} The map $\phi : \wb\cP_X \to \wh\cP_X$ defined by $\phi(i,j) =
  [X^{n-1}]^{j-n} \star \tau(e_{i+n-j}+e_n)$ is an order isomorphism which
  identifies $\cP_X$ with the set $\{(i,j) \in \Z^2 \mid 1 \leq i < j \leq n
  \}$.\smallskip

  \noin{\rm(d)} The action of Seidel classes on $\wh\cP_X$ is given by
  $[X^{n-1}] \star \phi(i,j) = \phi(i+1,j+1)$ and $[1.P_X] \star \phi(i,j) =
  \phi(j,i+n)$.
\end{prop}
\begin{proof}
  Let $F = Q^{2n-2}$ be the quadric of type $D_n$. Then we have the relation
  $w_0^F = s_1 \cdots s_{n-2}s_{n-1}s_ns_{n-2} \cdots s_1$, hence $(w_0^F)^X =
  s_1 \cdots s_{n-2}s_n$. This shows that $[X^{n-1}] = [X^{w_0^F}]$. Since
  $(w_0^F)^2 = 1$ holds in $W$, it follows from \Corollary{seidel} that
  $[X^{n-1}]^2$ is a power of $q$. Using that $\deg(q)=2n-2$, we obtain
  $[X^{n-1}]^2 = q$. Since $W^\comin$ has order 4, we have $(w_0^X)^2 \in \{ 1,
  w_0^F \}$, so \Corollary{seidel} implies that either $[1.P_X]^2$ or $[X^{n-1}]
  \star [1.P_X]^2$ is a power of $q$. In either case, $[1.P_X]^2$ is a power of
  $[X^{n-1}]$, and since $\dim(X) = \binom{n}{2}$, we must have $[1.P_X]^2 =
  [X^{n-1}]^n$. Parts (a) and (b) follow from these observations.

  For convenience we set $\al_i = e_i+e_n$ for $1 \leq i \leq n-1$ and $\al'_i =
  e_i+e_{n-1}$ for $1 \leq i \leq n-2$, so that $\cP_X \ssm I(z_1^\vee) = \{
  \al'_1, \dots, \al'_{n-2}, \al_1, \dots, \al_{n-1} \}$. Then $I(\tau(\al_i))
  \cap \cP_X$ consists of the top $i$ rows of $\cP_X$, and $I(\tau(\al'_i)) \cap
  \cP_X$ is obtained by removing the rightmost column in this shape. Notice also
  that $\tau(\al_1) = [X^{n-1}]$, $\tau(\al_{n-1}) = [1.P_X]$, and $\phi(i,j) =
  [X^{n-1}]^{j-n} \star \tau(\al_{i+n-j})$. It follows from
  \cite{kresch.tamvakis:quantum*1} or \Corollary{seidel} that $[X^{n-1}] \star
  \tau(\al'_i) = \tau(\al_{i+1})$ holds in $\QH(X)$ for $1 \leq i \leq n-2$.
  \Proposition{qposet} therefore implies that
  \[
    \begin{split}
      \wh\cP_X \cap H^*(X) \
      &= \ \{ 1, \tau(\al'_1), \dots, \tau(\al'_{n-2}),
      \tau(\al_1), \dots, \tau(\al_{n-1}) \} \\
      &= \ \{ [X^{n-1}]^\epsilon \star \tau(\al_i) \mid 1 \leq i \leq n-1
      \text{ and } \epsilon \in \{0,-1\} \}
    \end{split}
  \]
  and that $\phi$ is bijective. Since $\al_i < \al_{i+1}$ holds in $\cP_X$ and
  $[X^{n-1}]$ is a Seidel class, we obtain $\phi(i,j) < \phi(i+1,j)$ for $i+1 <
  j < i+n$. For $i < j < i+n-1$ we have
  \[
    \begin{split}
      \phi(i,j) \
      &= \ [X^{n-1}]^{j-n} \star \tau(\al_{i+n-j})
      \ = \ [X^{n-1}]^{j+1-n} \star \tau(\al'_{i+n-j-1}) \\
      &< \ [X^{n-1}]^{j+1-n} \star \tau(\al_{i+n-j-1})
      \ = \ \phi(i,j+1) \,.
    \end{split}
  \]
  This implies that $\phi$ is order-preserving. Assume that $\wh\al' \lessdot
  \wh\al$ is a covering in $\wh\cP_X$. We must show that $\phi^{-1}(\wh\al') <
  \phi^{-1}(\wh\al)$. Since $\phi$ is surjective and quantum multiplication by
  $[X^{n-1}]$ is an order automorphism of $\wh\cP_X$, we may assume that $\wh\al
  = \tau(\al_i)$ for some $i$. \Lemma{qposet-covering} then shows that $\wh\al'
  = \tau(\al')$ for some $\al' \in \cP_X$. We deduce that $\wh\al' =
  \tau(\al'_i)$ or $\wh\al' = \tau(\al_{i-1})$. In either case we obtain
  $\phi^{-1}(\wh\al') < \phi^{-1}(\wh\al)$. This proves that $\phi$ is an order
  isomorphism. Finally, using that $\phi(0,n-1) = 1$ and $\phi(n-1,n) =
  [1.P_X]$, the last claim in part (c) follows from \Proposition{qposet}(c).

  The identity $[X^{n-1}] \star \phi(i,j) = \phi(i+1,j+1)$ follows from the
  definition of $\phi$. Quantum multiplication by $[1.P_X]$ corresponds to an
  order automorphism of $\wb\cP_X$ that commutes with multiplication by
  $[X^{n-1}]$, and any such order automorphism of $\wb\cP_X$ is a translation
  along a northwest-to-southeast line, possibly combined with a reflection in
  such a line. Using that $[1.P_X] \star \phi(0,n-1) = \phi(n-1,n)$, we deduce
  that multiplication by $[1.P_X]$ corresponds to the automorphism $(i,j)
  \mapsto (j,i+n)$ of $\wb\cP_X$, which proves part (d).
\end{proof}

We may identify $\wh\cP_X$ with the set of boxes $\wb\cP_X$. Given a shape $\la
\subset \wh\cP_X$ and $d \in \Z$, it follows from \Proposition{og:qposet} that
the shifted shape $\la[d] = q^d \star \la = [X^{n-1}]^{2d} \star \la$ is
obtained by moving $\la$ by $2d$ diagonal steps in southeast direction.

\begin{remark}
  It is natural to extend the notation $\la[d]$ to half-integer shifts by
  setting $\la[k/2] = [X^{n-1}]^k \star \la$. We then have $(\cO^{n-1})^k \star
  \cO^\la = \cO^{\la[k/2]}$ in $\QK(X)_q$.
\end{remark}

\begin{example}\label{example:og:shift}%
  Let $X = \OG(7,14)$. The following figure shows the border of a quantum shape
  $\la$ in $\wh\cP_X$ together with the border of the shifted shape $\la[1] = q
  \star \la$.
  \begin{center}
  \begin{tikzpicture}[x=.5cm,y=.5cm]
    \def\yy{-- ++(0,-1) -- ++(1,0)}
    \def\zz{-- ++(0,1) -- ++(-1,0)}
    \draw (1,5) -- ++(6,0) \yy\yy\yy\yy\yy\yy\yy -- ++(0,-1)
    -- ++(-6,0) \zz\zz\zz\zz\zz\zz\zz -- cycle;
    \draw[very thick] (2,4) -- ++(6,0) -- ++(0,-6) -- ++(-1,0)
    \zz\zz\zz\zz\zz -- cycle;
    \draw [red,line width=2.2pt] (4,2) -- ++(2,0) -- ++(0,1) -- ++(1,0) -- ++(0,1);
    \draw [red,line width=2.2pt] (6,0) -- ++(2,0) -- ++(0,1) -- ++(1,0) -- ++(0,1);
    \draw (5.0,2.65) node {$\la$};
    \draw (7.0,0.65) node {$\la[1]$};
  \end{tikzpicture}
  \end{center}
\end{example}

\subsection{Pieri formula}\label{sec:og:pieri}%

The Pieri formula for the $K$-theory ring $K(X)$ proved in
\cite{buch.ravikumar:pieri} expresses the structure constants of Pieri products
as signed counts of KOG-tableaux, defined as follows.

\begin{defn}[KOG-tableau, \cite{buch.ravikumar:pieri}]\label{defn:kog}%
  Given a skew shape $\theta \subset \cP_X$, a \emph{KOG-tableau} of shape
  $\theta$ is a labeling of the boxes of $\theta$ with positive integers, such
  that (i) each row of $\theta$ is strictly increasing from left to right; (ii)
  each column of $\theta$ is strictly increasing from top to bottom; and (iii)
  each box of $\theta$ is either smaller than or equal to all boxes south-west
  of it, or it is greater than or equal to all boxes south-west of it. The
  \emph{content} of a KOG-tableau is the set of integers contained in its boxes.
  Let $\cB(\theta,p)$ denote $(-1)^{|\theta|-p}$ times the number of
  KOG-tableaux of shape $\theta$ with content $\{1,2,\dots,p\}$.
\end{defn}

The skew shape $\theta$ is called a \emph{rim} if no box in $\theta$ is strictly
south and strictly east of another box in $\theta$. If $\theta$ is not a rim,
then there are no KOG-tableau of shape $\theta$, hence $\cB(\theta,p) = 0$ for
all $p$.

\begin{thm}\label{thm:og:qkpieri}%
  Let $X = \OG(n,2n)$, let $\la \subset \wh\cP_X$ be any quantum shape, and
  let $1 \leq p \leq n-1$. Then
  \[
    \cO^p \star \cO^\la = \sum_\nu \cB(\nu/\la, p)\, \cO^\nu
  \]
  holds in $\QK(X)_q$, where the sum is over all quantum shapes $\nu \subset
  \wh\cP_X$ containing $\la$.
\end{thm}
\begin{proof}
  Choose $k$ maximal such that $\phi(k,k+n-1) \in \la$. By \Corollary{seidel}
  and \Proposition{og:qposet} we have $(\cO^{n-1})^{-k} \star \cO^\la =
  \cO^\mu$, where $\mu \subset \cP_X$ is a classical shape with $\mu_1 \leq
  n-2$. \Corollary{qfree} then implies that $\dmax(p,\mu) = 0$, so
  \cite[Thm.~8.3]{buch.chaput.ea:positivity} shows that $\cO^p \star \cO^\mu$
  agrees with the classical product $\cO^p \cdot \cO^\mu$ in $K(X)$. Notice
  that, if $\nu \supset \mu$ is any quantum shape such that $\nu/\mu$ is a rim,
  then $\nu$ is a classical shape. It therefore follows from
  \cite[Cor.~4.8]{buch.ravikumar:pieri} that
  \[
    \cO^p \star \cO^\mu = \sum_\nu \cB(\nu/\mu,p)\, \cO^\nu
  \]
  holds in $\QK(X)$, where the sum is over all shapes $\nu \subset \wh\cP_X$
  containing $\mu$. Since quantum multiplication by $(\cO^{n-1})^k$ defines a
  module automorphism of $\QK(X)$ and defines an order automorphism of
  $\wh\cP_X$, this identity is equivalent to the theorem.
\end{proof}

\begin{cor}\label{cor:og:qkpieri}%
  Let $\la \subset \cP_X$ be any shape and let $1 \leq p \leq n-1$. Then
  \[
    \cO^p \star \cO^\la \ = \
    \sum_\mu \cB(\mu/\la)\, \cO^\mu +
    q \sum_\nu \cB(\nu[1]/\la)\, \cO^\nu
  \]
  holds in $\QK(X)$, where the first sum is over all shapes $\mu \subset \cP_X$
  containing $\la$, and the second sum is over all shapes $\nu \subset \cP_X$
  for which $\nu[1]$ contains $\la$.
\end{cor}
\begin{proof}
  This is a direct translation of \Theorem{og:qkpieri}, using that $\cO^{\nu[1]}
  = q\,\cO^\nu$.
\end{proof}

\begin{example}
  Let $X = \OG(5,10)$. Then the following holds in $\QK(X)$.
  \[
    \cO^2 \star \cO^{(4,2)} \, = \,
    2\,\cO^{(4, 3, 1)} - \cO^{(4, 3, 2)} + q - 2\,q\,\cO^1 + q\,\cO^2 \,.
  \]
  The corresponding KOG-tableaux are:\smallskip

  {
    \SMALL
    \def\hsep{\hspace{10pt}}
    $\tableau{7}{{}&{}&{}&{}\\&{}&{}&{1}\\&&{2}}$ \hsep
    $\tableau{7}{{}&{}&{}&{}\\&{}&{}&{2}\\&&{1}}$ \hsep
    $\tableau{7}{{}&{}&{}&{}\\&{}&{}&{1}\\&&{1}&{2}}$ \hsep
    $\tableau{7}{{}&{}&{}&{}\\&{}&{}&{1}&{2}}$ \hsep
    $\tableau{7}{{}&{}&{}&{}\\&{}&{}&{1}&{2}\\&&{1}}$ \hsep
    $\tableau{7}{{}&{}&{}&{}\\&{}&{}&{1}&{2}\\&&{2}}$ \hsep
    $\tableau{7}{{}&{}&{}&{}\\&{}&{}&{1}&{2}\\&&{1}&{2}}$
    }
\end{example}


\section{Pieri formula for Lagrangian Grassmannians}\label{sec:lagrange}%

\subsection{Quantum shapes}\label{sec:lg:poset}%

\targetsec{lg}{%
Let $X = \LG(n,2n)$ be the Lagrangian Grassmannian of maximal isotropic
subspaces of $\C^{2n}$ endowed with a symplectic bilinear form. The quantum
cohomology ring $\QH(X)$ was computed in \cite{kresch.tamvakis:quantum}, and a
Pieri formula for the ordinary $K$-theory ring $K(X)$ was obtained in
\cite{buch.ravikumar:pieri}.}

The Lagrangian Grassmannian $X$ is cominuscule, but not minuscule, of type
$C_n$. We identify the simple roots of type $C_n$ with the vectors
\[
  \Delta = \{ e_n-e_{n-1}, \dots, e_3-e_2, e_2-e_1, 2e_1 \} \,,
\]
where $\ga = 2e_1$ is the cominuscule simple root defining $X$. We then obtain
\[
  \cP_X = \{ e_i+e_j \mid 1 \leq i \leq j \leq n \} \,,
\]
where the partial order is given by $e_{i'}+e_{j'} \leq e_i+e_j$ if and only if
$i' \leq i$ and $j' \leq j$. We represent $\cP_X$ as a staircase shape with $n$
rows, where $e_i+e_j$ corresponds to the box in row $i$ and column $j$:
\[
  \cP_{\LG(6,12)} \, = \
  \tableau{8}{
  {}&{}&{}&{}&{}&{}\\
    &{}&{}&{}&{}&{}\\
    &  &{}&{}&{}&{}\\
    &  &  &{}&{}&{}\\
    &  &  &  &{}&{}\\
    &  &  &  &  &{}}
\]
Each shape $\la \subset \cP_X$ can be identified with a strict partition
\[
  \la = (\la_1 > \la_2 > \dots > \la_\ell > 0)
\]
with $\la_1 \leq n$, where $\la_i$ is the number of boxes in the $i$-th row of
$\la$. If $\la \subset \cP_X$ consists of a single row of boxes, then $\la$ will
also be identified with the integer $p = |\la|$. The special Schubert classes in
$K(X)$ are the classes $\cO^p$ for $1 \leq p \leq n$.

\targetsec{lg:poset}{%
Define the set
\[
  \wb\cP_X = \{ (i,j) \in \Z^2 \mid i \leq j \leq i+n \} \,,
\]
and give $\wb\cP_X$ the northwest-to-southeast order $(i',j') \leq (i,j)$ if and
only if $i' \leq i$ and $j' \leq j$. We represent $\wb\cP_X$ as an infinite set
of boxes $(i,j)$ in the plane, where the row number $i$ increases from north to
south, and the column number $j$ increases from east to west. Each row in
$\wb\cP_X$ contains $n+1$ boxes. The set $\cP_X$ will be identified with the
subset $\{(i,j) \in \Z^2 \mid 1 \leq i \leq j \leq n \} \subset
\wb\cP_X$.}\medskip

\begin{center}
  \begin{tikzpicture}[x=.3cm,y=.3cm]
    \def\yy{-- ++(0,-1) -- ++(1,0)}
    \def\zz{-- ++(0,1) -- ++(-1,0)}
    \draw (0,6) -- ++(7,0) \yy\yy\yy\yy\yy\yy\yy\yy\yy -- ++(0,-1)
    -- ++(-7,0) \zz\zz\zz\zz\zz\zz\zz\zz\zz -- cycle;
    \draw[very thick] (2,4) -- (8,4) -- (8,-2) -- ++(-1,0)
    \zz\zz\zz\zz\zz -- cycle;
    \node at (-3.5,.5) {$\wb\cP_{\LG(6,12)} \, =$};
  \end{tikzpicture}
\end{center}
\smallskip

Recall that $z_d \in W^X$ is defined by $X_{z_d} = \Gamma_d(1.P_X)$ for $d \geq
0$.

\begin{prop}\label{prop:lg:qposet}%
  Let $X = \LG(n,2n)$.\smallskip

  \noin{\rm(a)} The group of Seidel classes in $\QH(X)_q^\times$ is generated
  by $[1.P_X]$ and $q$.\smallskip

  \noin{\rm(b)} We have $[1.P_X]^2 = q^n$ in $\QH(X)$.

  \noin{\rm(c)} The map $\phi : \wb\cP_X \to \wh\cP_X$ defined by $\phi(i,j) =
  q^{j-n}\,[X^{z_{i+n-j}}]$ is an order isomorphism which identifies $\cP_X$
  with the set $\{(i,j) \in \Z^2 \mid 1 \leq i \leq j \leq n \}$.

  \noin{\rm(d)} The action of Seidel classes on $\wh\cP_X$ is determined by $q
  \star \phi(i,j) = \phi(i+1,j+1)$ and $[1.P_X] \star \phi(i,j) = \phi(j,i+n)$.
\end{prop}
\begin{proof}
  Since the root system of type $C_n$ has only one cominuscule root, we have
  $W^\comin = \{ 1, w_0^X \}$. It follows that $[1.P_X]^2$ is a power of $q$ in
  $\QH(X)$, and since $\dim(X) = \binom{n+1}{2}$ and $\deg(q) = n+1$, we must
  have $[1.P_X]^2 = q^n$. Parts (a) and (b) follow from this.

  We have $\cP_X \ssm I(z_1^\vee) = \{ e_1+e_n, \dots, e_{n-1}+e_n, 2e_n \}$.
  Since $e_i+e_n$ is the unique maximal box of $I(z_i)$, it follows from
  \Proposition{qposet} that the map $\phi$ is bijective. Notice that for $a,b
  \in [0,n]$ and $d \in \Z$, $[X^{z_a}] \leq q^d [X^{z_b}]$ holds in $\wh\cP_X$
  if and only if $d \geq 0$ and $\Gamma_d(X_{z_b}) \cap X^{z_a} \neq \emptyset$,
  which is equivalent to $d \geq 0$ and $a \leq b+d$, see
  \cite[Lemma~7.9]{buch.chaput.ea:positivity}. It follows that $\phi(i',j') \leq
  \phi(i,j)$ holds in $\wh\cP_X$ if and only if $(i',j') \leq (i,j)$ holds in
  $\wb\cP_X$. This shows that $\phi$ is an order isomorphism. The last claim in
  part (c) follows from \Proposition{qposet}(c), noting that $\phi(0,n) = 1$ and
  $\phi(n,n) = [1.P_X]$.

  The identity $q \star \phi(i,j) = \phi(i+1,j+1)$ follows from the definition
  of $\phi$. Quantum multiplication by $[1.P_X]$ corresponds to an order
  automorphism of $\wb\cP_X$ that commutes with multiplication by $q$, and any
  such order automorphism of $\wb\cP_X$ is a translation along a
  northwest-to-southeast line, possibly combined with a reflection in such a
  line. Using that $[1.P_X] \star \phi(0,n) = \phi(n,n)$, we deduce the formula
  $[1.P_X] \star \phi(i,j) = \phi(j,i+n)$, proving part (d).
\end{proof}

We may identify $\wh\cP_X$ with the set of boxes $\wb\cP_X$. Given a shape $\la
\subset \wh\cP_X$ and $d \in \Z$, it follows from \Proposition{lg:qposet} that
the shifted shape $\la[d] = q^d \star \la$ is obtained by moving $\la$ by $d$
diagonal steps in southeast direction.

\begin{example}\label{example:lg:shift}%
  Let $X = \LG(6,12)$. The following figure shows the border of a quantum shape
  $\la$ in $\wh\cP_X$ together with the border of the shifted shape $\la[1] = q
  \star \la$.
  \begin{center}
  \begin{tikzpicture}[x=.5cm,y=.5cm]
    \def\yy{-- ++(0,-1) -- ++(1,0)}
    \def\zz{-- ++(0,1) -- ++(-1,0)}
    \draw (1,5) -- ++(7,0) \yy\yy\yy\yy\yy\yy\yy -- ++(0,-1)
    -- ++(-7,0) \zz\zz\zz\zz\zz\zz\zz -- cycle;
    \draw[very thick] (2,4) -- (8,4) -- (8,-2) -- ++(-1,0)
    \zz\zz\zz\zz\zz -- cycle;
    \draw [red,line width=2.2pt] (5,1) -- ++(0,2) -- ++(2,0) -- ++(0,1) -- ++(1,0);
    \draw [red,line width=2.2pt] (6,0) -- ++(0,2) -- ++(2,0) -- ++(0,1) -- ++(1,0);
    \draw (6.0,3.5) node {$\la$};
    \draw (7.0,2.5) node {$\la[1]$};
  \end{tikzpicture}
  \end{center}
\end{example}

\subsection{Pieri formula}\label{sec:lg:pieri}%

The Pieri formula for the $K$-theory ring $K(X)$ proved in
\cite{buch.ravikumar:pieri} expresses the structure constants of Pieri products
as signed counts of KLG-tableaux, defined as follows. A rim is defined as in
\Section{og:pieri}.

\begin{defn}[KLG-tableau, \cite{buch.ravikumar:pieri}]\label{defn:klgtab}%
  Let $\theta \subset \cP_X$ be a rim. A \emph{KLG-tableau} of shape $\theta$ is
  a labeling of the boxes of $\theta$ with elements from the ordered set
  \[
    \{1' < 1 < 2' < 2 < \cdots \}
  \]
  such that (i) each row of $\theta$ is strictly increasing from left to right;
  (ii) each column of $\theta$ is strictly increasing from top to bottom; (iii)
  each box containing an unprimed integer is larger than or equal to all boxes
  southwest of it; (iv) each box containing a primed integer is smaller than or
  equal to all boxes southwest of it; (v) no SW diagonal box contains a primed
  integer. The \emph{content} of a KLG-tableau is the set of integers $i$ such
  that some box contains $i$ or $i'$. Define $\cC(\theta,p)$ to be
  $(-1)^{|\theta|-p}$ times the number of KLG-tableaux of shape $\theta$ with
  content $\{1,2,\dots,p\}$. If $\theta \subset \cP_X$ is a skew shape that is
  not a rim, then set $\cC(\theta,p) = 0$.
\end{defn}

In contrast to the case of maximal orthogonal Grassmannians, we need to adjust
the definition of KLG-tableau with extra conditions in the quantum case.

\begin{defn}[QKLG-tableau]\label{defn:qklgtab}%
  Let $T$ be a KLG-tableau whose shape is a rim contained in $\wh\cP_X$. A box
  of $T$ is called \emph{unrepeated} if its label is distinct from all other
  labels when ignoring primes. A box of $T$ is a {\em quantum box} if it belongs
  to the NE diagonal of $\wh\cP_X$ or is connected to an unrepeated quantum box.
  A box of $T$ is \emph{terminal} if it is not on the SW diagonal of $\wh\cP_X$
  and is not connected to a box to the left or below it. We say that $T$ is a
  \emph{QKLG-tableau} if (vi) every primed non-terminal quantum box is
  unrepeated, and (vii) every terminal quantum box is primed. For any rim
  $\theta$ contained in $\wh\cP_X$, we let $\cN(\theta,p)$ denote
  $(-1)^{|\theta|-p}$ times the number of QKLG-tableaux of shape $\theta$ with
  content $\{1,2,\dots,p\}$. If $\theta \subset \wh\cP_X$ is a skew shape that
  is not a rim, then set $\cN(\theta,p) = 0$.
\end{defn}

The integers $\cN(\theta,p)$ can also be defined recursively, see
\Definition{lgqkpieri}. Notice that $\cN(\theta,p) = \cC(\theta,p)$ holds
whenever $\theta \subset \cP_X$.

\begin{thm}\label{thm:lg:qkpieri}%
  Let $X = \LG(n,2n)$, let $\la \subset \wh\cP_X$ be any quantum shape, and let
  $1 \leq p \leq n$. Then
  \[
    \cO^p \star \cO^\la \,=\, \sum_\nu\,
    \cN(\nu/\la,p)\, \cO^\nu
  \]
  holds in $\QK(X)_q$, where the sum is over all quantum shapes $\nu \subset
  \wh\cP_X$ containing $\la$.
\end{thm}

The proof of \Theorem{lg:qkpieri} is given in the three remaining sections of
this paper.

\begin{cor}\label{cor:lg:qkpieri}%
  Let $\la \subset \cP_X$ be any shape and let $1 \leq p \leq n$. Then
  \[
    \cO^p \star \cO^\la \,=\, \sum_\mu \cC(\mu/\la,p)\, \cO^\mu
    + q \sum_\nu \cN(\nu[1]/\la,p)\, \cO^\nu
  \]
  holds in $\QK(X)$, where the first sum is over all shapes $\mu \subset \cP_X$
  containing $\la$, and the second sum is over all shapes $\nu \subset \cP_X$
  for which $\nu[1]$ contains $\la$.
\end{cor}

\begin{example}
  Let $X = \LG(7,14)$ and set $\la = (7,5,4,2)$ and $\nu = (7,5,3,2)$. Then
  $\nu[1]/\la$ meets both the SW diagonal and the NE diagonal of $\wh\cP_X$. The
  coefficient of $q\,\cO^\nu$ in $\cO^6 \star \cO^\la$ is $-4$, due to the
  following list of QKLG-tableaux of shape $\nu[1]/\la$ with content
  $\{1,2,3,4,5,6\}$:\medskip

  \newcommand{\hspc}{\hspace{8mm}}
  \newcommand{\sspc}{\hspace{5mm}}
  \noindent\hspc%
  $\tableau{10}{&&&[Aa]{1'}\\&&{2'}&{6}\\&&{3'}\\&[a]{4'}\\{4}&{5}}$\sspc%
  $\tableau{10}{&&&[Aa]{1'}\\&&{2'}&{6}\\&&{3'}\\&[a]{3'}\\{4}&{5}}$\sspc%
  $\tableau{10}{&&&[Aa]{1'}\\&&[a]{2'}&[Aa]{6}\\&&[a]{3'}\\&{4'}\\{5}&{6}}$\sspc%
  $\tableau{10}{&&&[Aa]{1'}\\&&[a]{2'}&[Aa]{6}\\&&[a]{6}\\&{3'}\\{4}&{5}}$\sspc%
  \bigskip

  \noindent
  Quantum boxes are indicated with a think border. There are five additional
  KLG-tableaux of shape $\nu[1]/\la$ with content $\{1,2,3,4,5,6\}$ which do not
  satisfy the additional conditions of \Definition{qklgtab}:\medskip

  \noindent\hspc%
  $\tableau{10}{&&&[Aa]{1'}\\&&{2'}&{6}\\&&{5}\\&[a]{3'}\\{3}&{4}}$\sspc%
  $\tableau{10}{&&&[Aa]{1'}\\&&{2'}&{6}\\&&{5}\\&[a]{3'}\\{4}&{5}}$\sspc%
  $\tableau{10}{&&&[Aa]{1'}\\&&{2'}&{6}\\&&[a]{5}\\&{2'}\\{3}&{4}}$\sspc%
  $\tableau{10}{&&&[Aa]{1'}\\&&[a]{1'}&{6}\\&&{2'}\\&{3'}\\{4}&{5}}$\sspc%
  $\tableau{10}{&&&[Aa]{1'}\\&&[a]{1'}&{6}\\&&{5}\\&{2'}\\{3}&{4}}$\sspc%
  \bigskip

  \noindent
  The first two violate condition (vii) and the last three violate condition
  (vi).
\end{example}


\section{Perpendicular incidences of symplectic Richardson varieties}
\label{sec:incidence}%

Let $Y_\sP^\sQ$ be a Richardson variety in the symplectic Grassmannian $Y =
\SG(m,2n)$. Each point $L \in \bP^{2n-1}$ defines the subvariety $Y_\sP^\sQ \cap
L^\perp = \{ V \in Y_\sP^\sQ \mid V \subset L^\perp \}$. Let $\bP_\sP^\sQ \subset
\bP^{2n-1}$ be the subset of points $L$ for which $Y_\sP^\sQ \cap L^\perp$ is not
empty. In this section we show that $\bP_\sP^\sQ$ is a complete intersection defined
by explicitly given equations. We also show that $Y_\sP^\sQ \cap L^\perp$ is
rational for all points $L$ in a dense open subset of $\bP_\sP^\sQ$. This will be
used in \Section{pierigw} to compute the Gromov-Witten invariants required to
prove our Pieri formula for the quantum $K$-theory of Lagrangian Grassmannians.

\subsection{Symplectic Grassmannians}\label{sec:sgrass}%

\targetsec{sg}{%
Let $\{ e_1, \dots, e_{2n} \}$ denote the standard basis of $\C^{2n}$. Define
the symplectic vector space $E = \C^{2n}$, where the skew-symmetric bilinear
form on $E$ is given by $(e_i,e_j) = \delta_{i+j,2n+1}$ for $1 \leq i \leq j
\leq 2n$. Given $0 \leq m \leq n$, let $Y = \SG(m,E) = \SG(m,2n)$ denote the
symplectic Grassmannian of $m$-dimensional isotropic subspaces of $E$,
\[
  Y = \SG(m,E) = \{ V \subset E \mid \dim(V)=m \text{ and } (V,V)=0 \} \,.
\]
This space has a transitive action by the symplectic group $G = \Sp(E)$. Let $B
\subset G$ be the Borel subgroup of upper triangular matrices, let $B^- \subset
G$ be the opposite Borel subgroup of lower triangular matrices, and let $T = B
\cap B^-$ be the maximal torus of symplectic diagonal matrices.}

\targetsec{sg:schub}{%
For $a,b \in \Z$, let $[a,b] = \{ x \in \Z \mid a \leq x \leq b \}$ denote the
corresponding integer interval. Given any subset $\sP \subset [1,2n]$, we let $E_\sP
= \Span_\C \{ e_p \mid p \in \sP \}$ be the span of the basis elements
corresponding to $\sP$. A \emph{Schubert symbol} for $\SG(m,2n)$ is a subset $\sP
\subset [1,2n]$ of cardinality $m$, such that $p'+p'' \neq 2n+1$ for all $p',p''
\in \sP$. The subspace $E_\sP$ is a point of $\SG(m,2n)$ if and only if $\sP$ is a
Schubert symbol, and the $T$-fixed points of $\SG(m,E)$ are exactly the points
$E_\sP$ for which $\sP$ is a Schubert symbol for $Y$. Each Schubert symbol $\sP$
defines the Schubert varieties
\[
  Y_\sP = \ov{B.E_\sP} \text{ \ \ \ \ and \ \ \ \ }
  Y^\sP = \ov{B^-.E_\sP} \ \subset Y \,.
\]
These varieties can also be defined by (see
\cite[\S4.1]{buch.kresch.ea:giambelli})
\[
  \begin{split}
    Y_\sP \, &= \, \{ V \in Y \mid \dim(V \cap E_{[1,b]}) \geq \#(\sP \cap [1,b])
    ~\forall~ b \in [1,2n] \} \text{ \ \ \ and} \\
    Y^\sP \, &= \, \{ V \in Y \mid \dim(V \cap E_{[a,2n]}) \geq \#(\sP \cap [a,2n])
    ~\forall~ a \in [1,2n] \} \,.
  \end{split}
\]
}

\targetsec{sg:bruhat}{%
Given Schubert symbols $\sP$ and $\sQ$ for $Y$, we will denote the elements of these
sets by $\sP = \{ p_1 < p_2 < \dots < p_m \}$ and $\sQ = \{ q_1 < q_2 < \dots < q_m
\}$. The \emph{Bruhat order} on Schubert symbols is defined by $\sQ \leq \sP$ if and
only if $q_i \leq p_i$ for $1 \leq i \leq m$. With this notation we have
\[
  \sQ \leq \sP \ \ \Leftrightarrow \ \
  E_\sQ \in Y_\sP \ \ \Leftrightarrow \ \
  Y_\sQ \subset Y_\sP \ \ \Leftrightarrow \ \
  Y_\sP \cap Y^\sQ \neq \emptyset \,.
\]
}

\targetsec{sg:length}{%
Define the length $\ell(\sP)$ to be
\[
  \ell(\sP) \ = \
  \sum_{i=1}^m (p_i - i) \,-\, \# \{ i < j : p_i + p_j > 2n+1 \} \,.
\]
We then have $\dim(Y_\sP) = \codim(Y^\sP,Y) = \ell(\sP)$. Notice also that $Y^\sP$ is a
translate of $Y_{\sP^\vee}$, where $\sP^\vee = \{ 2n+1-p \mid p \in \sP \}$ is the
Poincare dual Schubert symbol.}

\subsection{Richardson varieties}\label{sec:sg:rich}%

\targetsec{sgrich}{%
Two Schubert symbols $\sP$ and $\sQ$ for $Y = \SG(m,2n)$ such that $\sQ \leq \sP$ define
the Richardson variety
\[
  Y_\sP^\sQ = Y_\sP \cap Y^\sQ \,.
\]
This variety is known to be rational \cite{richardson:intersections}. Using that
$\dim(Y_\sP^\sQ) = \ell(\sP) - \ell(\sQ)$, we obtain
\begin{equation}\label{eqn:richdim}%
  \dim(Y_\sP^\sQ) \ = \
  \sum_{i=1}^m (p_i - q_i) \,-\, \# \{ i<j : q_i+q_j < 2n+1 < p_i+p_j \} \,.
\end{equation}
For any point $V \in Y_\sP^\sQ$ we have $V \subset E_{[q_1,p_m]}$ and $V \cap
E_{[q_i,p_i]} \neq 0$ for $1 \leq i \leq m$; this holds because $\dim(V \cap
E_{[1,p_i]}) \geq i$, $\dim(V \cap E_{[q_i,2n]}) \geq m+1-i$, and $\dim(V) = m$.}

\targetsec{sflag}{%
Let $Y = \SG(m,E)$ and $Y' = \SG(m-1,E)$, and define the 2-step symplectic
flag variety
\[
  Z = \SF(m-1,m;E) =
  \{ (V',V) \in Y' \times Y \mid V' \subset V \} \,.
\]
Let $a : Z \to Y$ and $b : Z \to Y'$ be the projections. The $T$-fixed points in
$Z$ have the form $(E_{\sP'},E_\sP)$, where $\sP'$ and $\sP$ are Schubert symbols for
$Y'$ and $Y$, respectively, such that $\sP' \subset \sP$. The corresponding Schubert
varieties are denoted
\[
  Z_{\sP',\sP} = \ov{B.(E_{\sP'},E_\sP)}
  \text{ \ \ \ and \ \ \ }
  Z^{\sP',\sP} = \ov{B^-.(E_{\sP'},E_\sP)} \,.
\]
A Richardson variety in $Z$ is denoted by $Z_{\sP',\sP}^{\sQ',\sQ} = Z_{\sP',\sP} \cap
Z^{\sQ',\sQ}$. Recall our standing notation $\sP = \{ p_1 < \dots < p_m \}$ and
$\sQ = \{ q_1 < \dots < q_m \}$ for Schubert symbols for $\SG(m,2n)$.}

\begin{prop}\label{prop:ontorich}%
  Let $\sQ \leq \sP$ be Schubert symbols for $Y = \SG(m,2n)$, and let $1 \leq k \leq
  m$. Set $\sQ' = \sQ \ssm \{ q_k \}$ and $\sP' = \sP \ssm \{ p_k \}$. Then the
  restricted map $a : Z_{\sP',\sP}^{\sQ',\sQ} \to Y_\sP^\sQ$ is birational. In addition, the
  restricted map $b : Z_{\sP',\sP}^{\sQ',\sQ} \to {Y'}_{\sP'}^{\sQ'}$ is surjective if and
  only if $ \dim({Y'}_{\sP'}^{\sQ'}) \leq \dim(Y_\sP^\sQ)$.
\end{prop}

\targetsec{sp:weyl}{%
We will prove \Proposition{ontorich} after introducing some additional notation
and results. We will identify the Weyl group of $\Sp(2n)$ with the group of
permutations
\[
  W = \{ w \in S_{2n} \mid w(i) + w(2n+1-i) = 2n+1
  \text{ for } i \in [1,2n] \} \,.
\]
This group is generated by the simple reflections $s_1, \dots, s_n \in W$
determined by
\[
  s_i(i) = i+1 \,, \ \ \  s_i(i+1) = i \,, \ \ \ \text{and} \ \ \
  s_i(j) = j \ \text{ for } j \in [1,n] \ssm \{i,i+1\} \,.
\]
}
The simple reflection $s_n$ corresponds to the unique long simple root of the
root system of type $C_n$. The parabolic subgroup $P_Y \subset G$ defining $Y =
\SG(m,2n)$ corresponds to the subgroup $W_Y \subset W$ generated by $s_i$ for $i
\neq m$. Let $W^Y \subset W$ be the subset of minimal representatives of the
cosets in $W/W_Y$. Then $W^Y$ is in bijective correspondence with the Schubert
symbols of $Y$. The Schubert symbol $\sP = \{ p_1 < p_2 < \dots < p_m \}$
corresponds to the permutation $w \in W^Y$ defined by
\[
  w(j) = p_j \ \text{ for } \ 1 \leq j \leq m \,, \ \ \ \text{and} \ \ \
  w(m+1) < w(m+2) < \dots < w(n) \leq n \,.
\]
This correspondence preserves the Bruhat order.

\targetsec{w-hat}{%
The permutation $\wh w \in W^Z$ corresponding of a $T$-fixed point
$(E_{\sP'},E_\sP)$ of $Z = \SF(m-1,m;2n)$, with $\sP' = \sP \ssm \{ p_k \}$, is defined
by
\[
  \wh w(j) = \begin{cases}
    p_j & \text{if $1 \leq j < k$,} \\
    p_{j+1} & \text{if $k \leq j < m$,} \\
    p_k & \text{if $j = m$,}
  \end{cases}
\]
and $\wh w(m+1) < \wh w(m+2) < \dots < \wh w(n) \leq n$. Equivalently, if $w \in
W^Y$ corresponds to $\sP$, then
\[
  \wh w = w s_k s_{k+1} \dots s_{m-1} \,.
\]
}

\targetsec{w-prime}{%
Let $w' \in W^{Y'}$ be the permutation corresponding to $\sP'$. Then $w'$ is
obtained from $\wh w$ by first replacing the value $\wh w(m)$ with $\min(p_k,
2n+1-p_k)$, and then rearranging the values $\wh w(m), \dots, \wh w(n)$ in
increasing order. Since $\wh w(m+1) < \dots < \wh w(n) \leq n$, we can write $w'
= \wh w y$, where $y$ is the product of the first $\ell(\wh w) - \ell(w')$
simple reflections in the product
\begin{equation}\label{eqn:simprefprod}%
  s_m s_{m+1} \cdots s_{n-1} s_n s_{n-1} \cdots s_{m+1} s_m \,.
\end{equation}
}

\targetsec{projrich}{%
Let $F = \Sp(2n)/B$ be the variety of complete symplectic flags, and let $M =
\Sp(2n)/P_M$ be any flag variety of $G = \Sp(2n)$. For $\tau \leq \sigma$ in
$W$, let $\Pi_\sigma^\tau(M) \subset M$ denote the \emph{projected Richardson
variety} obtained as the image of $F_\sigma^\tau$ under the projection $F \to
M$. Recall from \cite[\S2.1]{buch.chaput.ea:positivity} that the $M$-Bruhat order
$\leq_M$ on $W$ can be defined by
\[
  \tau \leq_M \sigma
  \ \ \ \Longleftrightarrow \ \ \
  \tau \leq \sigma \ \ \text{and} \ \ \sigma_M \leq_L \tau_M \,,
\]
where $\sigma = \sigma^M \sigma_M$ and $\tau = \tau^M \tau_M$ are the parabolic
factorizations with respect to $W_M$, and $\leq_L$ is the left weak Bruhat order
on $W$. We need the following properties of projected Richardson varieties from
\cite{knutson.lam.ea:projections} (see also
\cite[\S3]{buch.chaput.ea:positivity}).}

\begin{prop}[\cite{knutson.lam.ea:projections}]\label{prop:projrich}%
  Let $\tau \leq \sigma$ in $W$. The projected Richardson variety
  $\Pi_\sigma^\tau(M)$ satisfies the following properties.\smallskip

  \noin{\rm(a)} We have $\Pi_\sigma^\tau(M) \subset M_\sigma^\tau$.\smallskip

  \noin{\rm(b)} If $\sigma \in W^M$, then equality $\Pi_\sigma^\tau(M) =
  M_\sigma^\tau$ holds if and only if $\tau \in W^M$.\smallskip

  \noin{\rm(c)} The projection $F_\sigma^\tau \to \Pi_\sigma^\tau(M)$ is
  birational if and only if $\tau \leq_M \sigma$.\smallskip

  \noin{\rm(d)} For any simple reflection $s_i \in W_M$ with $\sigma s_i <
  \sigma$, we have $\Pi_\sigma^\tau(M) = \Pi_{\sigma s_i}^{\min(\tau,\tau
  s_i)}$.
\end{prop}

\targetsec{minbruhat}{%
Here $\min(\tau, \tau s_i)$ denotes the smaller element among $\tau$ and $\tau
s_i$ in the Bruhat order on $W$.}

\begin{proof}[Proof of \Proposition{ontorich}]
  Let $u \in W^Y$ correspond to $\sP$ and let $v \in W^Y$ correspond to $\sQ$. Then
  $Y_\sP^\sQ = Y_u^v$ and $Z_{\sP',\sP}^{\sQ',\sQ} = Z_{\wh u}^{\wh v}$, where $\wh u = u x$
  and $\wh v = v x$, with $x = s_k s_{k+1} \cdots s_{m-1}$. Since $\wh u, \wh v
  \in W^Z$, we have $Z_{\wh u}^{\wh v} = \Pi_{\wh u}^{\wh v}(Z)$ by
  \Proposition{projrich}(b). Using that $\wh u = u x$ and $\wh v = v x$ are
  parabolic factorizations with respect to $W_Y$, we obtain $\wh v \leq_Y \wh
  u$, so \Proposition{projrich}(d,b,c) shows that $\Pi_{\wh u}^{\wh v}(Y) =
  \Pi_u^v(Y) = Y_u^v$ and $a : Z_{\wh u}^{\wh v} \to Y_u^v$ is birational. This
  proves the first claim.

  Since $Z_{\wh u}^{\wh v} = \Pi_{\wh u}^{\wh v}(Z)$, we have
  $b(Z_{\sP',\sP}^{\sQ',\sQ}) = \Pi_{\wh u}^{\wh v}(Y')$. Let $u', v' \in W^{Y'}$ be the
  elements corresponding to $\sP'$ and $\sQ'$. Then $u' = \wh u y$ and $v' = \wh v
  z$, where $y$ is the product of the first $\ell(\wh u) - \ell(u')$ simple
  reflections in \eqn{simprefprod}, and $z$ is the product of the first
  $\ell(\wh v) - \ell(v')$ simple reflections. Using \Proposition{projrich}(d),
  we obtain
  \[
    \Pi_{\wh u}^{\wh v}(Y') = \Pi_{\wh u y}^{\wh v \min(y,z)}(Y') \,.
  \]
  By \Proposition{projrich}(b), this variety is equal to ${Y'}_{u'}^{v'}$ if and
  only if $z \leq y$, which is equivalent to $\ell(u') - \ell(v') \leq \ell(\wh
  u) - \ell(\wh v)$. The second claim follows from this.
\end{proof}

\subsection{Matrix representations of Richardson varieties}\label{sec:matspace}%

\targetsec{MPQ}{%
We need a parametrization of an open subset of $Y_\sP^\sQ$ by matrices with
perpendicular rows, which is based on a combinatorial diagram used in
\cite{buch.kresch.ea:quantum, ravikumar:triple*1}. Let $M_\sP^\sQ$ be the variety of
all $m \times (2n)$-matrices $A = (a_{i,j})$, with $a_{i,j} \in \C$, such that
for $1 \leq i \leq m$ we have $a_{i,q_i} = 1$, $a_{i,p_i} \neq 0$, and $a_{i,j}
= 0$ for $j \notin [q_i,p_i]$, and such that each pair of rows of $A$ are
perpendicular as vectors in $E$, that is,
\begin{equation}\label{eqn:perp}%
  \sum_{t=1}^n (a_{i,t}\, a_{j,2n+1-t} - a_{i,2n+1-t}\, a_{j,t}) = 0
\end{equation}
for $1 \leq i < j \leq m$. Notice that this equation is vacuous unless
\[
  q_i+q_j < 2n+1 < p_i+p_j \,.
\]
}
We will say that rows $i$ and $j$ in $M_\sP^\sQ$ are \emph{correlated} if $i \neq j$
and this inequality holds. We will show in \Theorem{param} that $M_\sP^\sQ$ is
isomorphic to a dense open subset of the Richardson variety $Y_\sP^\sQ$. In
particular, $M_\sP^\sQ$ is non-empty and irreducible. Identity \eqn{richdim} states
that $\dim(Y_\sP^\sQ)$ is equal to the number of entries $a_{i,j}$ that are not
explicitly assigned to a constant value, minus the number of pairs of correlated
rows in $M_\sP^\sQ$.

\begin{example}\label{example:param}%
  Let $Y = \SG(4,12)$, $\sQ = \{ 2, 3, 8, 9 \}$, and $\sP = \{ 5, 7, 10, 12 \}$.
  Then $M_\sP^\sQ$ is the variety of all matrices of the form
  \[
    \setcounter{MaxMatrixCols}{12}
    A = \begin{bmatrix}
      0 & 1 & a_{1,3} & a_{1,4} & a_{1,5} & 0 & 0 & 0 & 0 & 0 & 0 & 0 \\
      0 & 0 & 1 & a_{2,4} & a_{2,5} & a_{2,6} & a_{2,7} & 0 & 0 & 0 & 0 & 0 \\
      0 & 0 & 0 & 0 & 0 & 0 & 0 & 1 & a_{3,9} & a_{3,10} & 0 & 0 \\
      0 & 0 & 0 & 0 & 0 & 0 & 0 & 0 & 1 & a_{4,10} & a_{4,11} & a_{4,12}
    \end{bmatrix} \,,
  \]
  such that $a_{1,5} \neq 0$, $a_{2,7} \neq 0$, $a_{3,10} \neq 0$, $a_{4,12}
  \neq 0$, and the rows of $A$ are pairwise perpendicular. The variety $M_\sP^\sQ$
  has 12 unassigned entries and 4 pairs of correlated rows, so $\dim(Y_\sP^\sQ) =
  8$.
\end{example}

\begin{remark}\label{remark:dimdiff}%
  Let $\sQ \leq \sP$ be Schubert symbols for $Y = \SG(m,2n)$ and $1 \leq k \leq m$.
  Set $\sQ' = \sQ \ssm \{q_k\}$ and $\sP' = \sP \ssm \{p_k\}$. Then $\sQ' \leq \sP'$ are
  Schubert symbols for $Y' = \SG(m-1,2n)$ and we have
  \[
    \dim Y_\sP^\sQ - \dim {Y'}_{\sP'}^{\sQ'} = (p_k - q_k) -
    \# \{ j \in [1,m] \mid j \neq k \text{ and } q_j+q_k < 2n+1 < p_j+p_k \} \,.
  \]
  This is the number of unassigned entries in row $k$ of $M_\sP^\sQ$, minus the
  number of rows correlated to row $k$.
\end{remark}

\targetsec{openrich}{%
Let $\oY_\sP^\sQ \subset Y_\sP^\sQ$ be the open subvariety defined by
\[
  \oY_\sP^\sQ = \{ V \in Y_\sP^\sQ \mid
  \forall\, 1 \leq i \leq m :
  V \cap E_{[q_i+1,p_i]} = V \cap E_{[q_i,p_i-1]} = 0 \} \,.
\]
The following result confirms a conjecture of Ravikumar \cite[Conj.~6.5.3]{ravikumar:triple}.}

\begin{thm}\label{thm:param}%
  The variety $\oY_\sP^\sQ$ is a dense open subset of $Y_\sP^\sQ$. Moreover, the map
  $M_\sP^\sQ \to \oY_\sP^\sQ$ sending a matrix to its row span is an isomorphism of
  varieties.
\end{thm}
\begin{proof}
  Since $Y_\sP^\sQ$ is irreducible and the subsets
  \[
    \begin{split}
      U^L_i &= \{ V \in Y_\sP^\sQ \mid V \cap E_{[q_i+1,p_i]} = 0 \}
      \text{ \ \ \ and} \\
      U^R_i &= \{ V \in Y_\sP^\sQ \mid V \cap E_{[q_i,p_i-1]} = 0 \}
    \end{split}
  \]
  are open in $Y_\sP^\sQ$, the first claim will follow if we can show that $U^L_i$
  and $U^R_i$ are non-empty for all $1 \leq i \leq m$. By replacing $Y_\sP^\sQ$ with
  $Y_{\sQ^\vee}^{\sP^\vee}$, if required, we may assume that $q_1+p_m \geq 2n+1$.
  The sets $U^L_m$ and $U^R_1$ are non-empty since $E_\sQ \in U^L_m$ and $E_\sP \in
  U^R_1$. Set $\Omega = \{ V \in Y \mid V \subset E_{[1,p_m-1]} \}$. Then $Y_\sP
  \cap \Omega$ is a $B$-stable proper closed subset of $Y_\sP$, so $Y_\sP \cap
  \Omega$ is a union of Schubert varieties $Y_{\wh\sP}$ that are properly
  contained in $Y_\sP$. It follows that $Y_\sP^\sQ \cap \Omega$ is a union of
  Richardson varieties $Y_{\wh\sP}^\sQ$ that are properly contained in $Y_\sP^\sQ$.
  Therefore, $U^L_m \ssm \Omega$ is a dense open subset of $Y_\sP^\sQ$.

  Set $Y' = \SG(m-1,2n)$, $\sQ' = \{q_1 < \dots < q_{m-1}\}$, and $\sP' = \{ p_1 <
  \dots < p_{m-1} \}$. By induction we may assume ${\oY'}_{\sP'}^{\sQ'} \neq
  \emptyset$. By \Remark{dimdiff}, the condition $q_1+p_m \geq 2n+1$ implies
  that $\dim({Y'}_{\sP'}^{\sQ'}) \leq \dim(Y_\sP^\sQ)$. In fact, if row $i$ of $M_\sP^\sQ$
  is correlated to row $m$, then $2n+1-p_m \leq q_1 \leq q_i < 2n+1-q_m$, so row
  $m$ is correlated to at most $p_m-q_m$ rows. Using \Proposition{ontorich}, we
  can therefore choose a point $(V',V) \in Z_{\sP',\sP}^{\sQ',\sQ}$ such that $V' \in
  {\oY'}_{\sP'}^{\sQ'}$ and $V \in U^L_m \ssm \Omega$. Since $V' \subset
  E_{[1,p_m-1]}$ and $V \not\subset E_{[1,p_m-1]}$, we have $V' = V \cap
  E_{[1,p_m-1]}$. The condition $V' \in {\oY'}_{\sP'}^{\sQ'}$ therefore implies that
  $V \in U^L_i \cap U^R_i$ for $1 \leq i \leq m-1$. Set $L = V \cap
  E_{[q_m,p_m]}$. Since $V' \subset E_{[1,p_{m-1}]}$, we obtain $V' \cap L
  \subset V' \cap E_{[q_{m-1}+1,p_{m-1}]} = 0$, hence $V = V' \oplus L$ and
  $\dim(L) = 1$. Since $V' \subset E_{[1,p_m-1]}$ and $V \not\subset
  E_{[1,p_m-1]}$, it follows that $L \not\subset E_{[1,p_m-1]}$. We deduce that
  $V \cap E_{[q_m,p_m-1]} = L \cap E_{[q_m,p_m-1]} = 0$, so that $V \in U^R_m$.
  We conclude that $V \in \oY_\sP^\sQ$, so this set is a dense open subset of
  $Y_\sP^\sQ$.

  It is clear from the definitions that $A \mapsto \Span(A)$ is a well defined
  morphism of varieties $M_\sP^\sQ \to \oY_\sP^\sQ$. On the other hand, given $V \in
  \oY_\sP^\sQ$, each space $L_i = V \cap E_{[q_i,p_i]}$ is one-dimensional, for $1
  \leq i \leq m$. In addition, if we write $L_i = \C u_i$ with $u_i \in E$, then
  the $q_i$-th and $p_i$-th coordinates of $u_i$ are non-zero. By rescaling
  $u_i$, we may assume that the $q_i$-th coordinate is $1$. Let $A$ be the $m
  \times (2n)$ matrix whose $i$-th row is $u_i$. Then $A \in M_\sP^\sQ$ and
  $\Span(A) = V$. This completes the proof.
\end{proof}

\subsection{Solvable and movable rows}\label{sec:movable}%

\targetsec{constraints}{
Let $\sQ \leq \sP$ be Schubert symbols for $Y = \SG(m,2n)$, let $1 \leq k \leq m$,
and let $A = (a_{i,j}) \in M_\sP^\sQ$. Define the \emph{submatrix of constraints} on
row $k$ in $A$ to be the matrix $A[k]$ with entries $a_{i,j}$ for which $i \neq
k$, $q_i+q_k < 2n+1 < p_i+p_k$, and $2n+1-p_k \leq j \leq 2n+1-q_k$. This matrix
has one row for each row correlated to the $k$-th row of $M_\sP^\sQ$. For example,
if $A$ is the matrix of \Example{param}, then the submatrix of constraints on
row $2$ is the matrix
\[
  A[2] = \begin{bmatrix}
    0 & 0 & 1 & a_{3,9} & a_{3,10} \\
    0 & 0 & 0 & 1 & a_{4,10}
  \end{bmatrix} \,.
\]
}
The constraints \eqn{perp} on row $k$ in $A$ imposed by the other rows depend
only on the entries of $A[k]$. We will say that the vector $v = (v_{q_k}, \dots,
v_{p_k}) \in \C^{p_k-q_k+1}$ is \emph{perpendicular} to $A[k]$ if the entries of
$v$ satisfy the quadratic equations \eqn{perp} imposed on the $k$-th row in $A$,
that is,
\[
  \sum_{t=1}^n (a_{i,t}\, v_{2n+1-t} - a_{i,2n+1-t}\, v_t) = 0
\]
for all $i \neq k$ with $q_i+q_k < 2n+1 < p_i+p_k$, where we set $v_t = 0$ for
$t \notin [q_k,p_k]$.

Set $\sQ' = \sQ \ssm \{q_k\}$, $\sP' = \sP \ssm \{p_k\}$, and $Y' = \SG(m-1,2n)$.
Motivated by \Proposition{ontorich} and \Theorem{param}, we will say that the
$k$-th row of $M_\sP^\sQ$ is \emph{solvable} if $\dim({Y'}_{\sP'}^{\sQ'}) \leq
\dim(Y_\sP^\sQ)$. By \Remark{dimdiff}, this means that there are at most $p_k-q_k$
constraints on the $k$-th row of $M_\sP^\sQ$. The $k$-th row of $M_\sP^\sQ$ is
\emph{movable} if $\dim({Y'}_{\sP'}^{\sQ'}) < \dim(Y_\sP^\sQ)$, that is, there are fewer
than $p_k-q_k$ constraints on the $k$-th row. If the $k$-th row of $M_\sP^\sQ$ is
movable, then for each matrix $A \in M_\sP^\sQ$, we can vary the $k$-th row of $A$
in a positive dimensional parameter space while fixing the remaining rows.

\begin{cor}\label{cor:fullrank}%
  Let $\sQ \leq \sP$ be Schubert symbols for $\SG(m,2n)$, and assume that the $k$-th
  row of $M_\sP^\sQ$ is solvable. Then $M_\sP^\sQ$ contains a dense open subset of
  points $A$ for which the submatrix $A[k]$ of constraints on row $k$ has
  linearly independent rows.
\end{cor}
\begin{proof}
  Set $\sQ' = \sQ \ssm \{ q_k \}$ and $\sP' = \sP \ssm \{ p_k \}$. Given $A \in M_\sP^\sQ$,
  let $A' \in M_{\sP'}^{\sQ'}$ denote the result of removing the $k$-th row from
  $A$. It follows from \Proposition{ontorich} and \Theorem{param} that $A
  \mapsto A'$ defines a dominant morphism $M_\sP^\sQ \to M_{\sP'}^{\sQ'}$. This implies
  that, for all points $A$ in a dense open subset of $M_\sP^\sQ$, the fiber over
  $A'$ in $M_\sP^\sQ$ is non-empty of dimension $\dim(M_\sP^\sQ) - \dim(M_{\sP'}^{\sQ'})$.
  This fiber can be identified with the set of vectors $v = (1, v_{q_k+1},
  \dots, v_{p_k})$, with $v_{p_k} \neq 0$, that are perpendicular to $A[k]$. We
  deduce that the rows of $A[k]$ are linearly independent by \Remark{dimdiff}.
\end{proof}

\subsection{Perpendicular incidence varieties}\label{sec:ss:incidence}%

\targetsec{perp}{%
Let $Y = \SG(m,2n)$ and define the \emph{perpendicular incidence variety}
\[
  S = \{ (V,L) \in Y \times \bP(E) \mid V \subset L^\perp \} \,.
\]
Let $f : S \to \bP(E)$ and $g : S \to Y$ be the projections. Given Schubert
symbols $\sQ \leq \sP$ for $Y$, we set $S_\sP^\sQ = g^{-1}(Y_\sP^\sQ)$. Since $g$ is locally
trivial with fibers $g^{-1}(V) = \bP(V^\perp)$ by
\cite[Prop.~2.3]{buch.chaput.ea:finiteness}, it follows that $S_\sP^\sQ$ is
irreducible with $\dim(S_\sP^\sQ) = \dim(Y_\sP^\sQ) + 2n - m - 1$.}

\targetsec{cut}{%
Following \cite{buch.kresch.ea:quantum, ravikumar:triple*1}, we define a
\emph{cut} of $M_\sP^\sQ$ to be an integer $c \in [0,2n]$ such that $p_i \leq c$ or
$c < q_i$ holds for each $i \in [1,m]$. This implies that no row of $M_\sP^\sQ$
contains non-zero entries in both column $c$ and column $c+1$. A \emph{lone
star} is an integer $s \in [1,2n]$ such that $q_i = p_i = s$ for some $i \in
[1,m]$. This implies that $s-1$ and $s$ are cuts of $M_\sP^\sQ$. The integer $c$ is
a \emph{double-cut} of $M_\sP^\sQ$ if both $c$ and $2n-c$ are cuts. A
\emph{component} of $M_\sP^\sQ$ is a pair of integers $(a,b)$, with $0 \leq a < b
\leq n$, such that (i) $a$ is a double-cut, (ii) $b$ is a double-cut or $b=n$,
and (iii) no double-cut belongs to $[a+1,b-1]$. We will say that row $i$ of
$M_\sP^\sQ$ is contained in the component $(a,b)$ if $a < q_i \leq p_i \leq b$, or
$2n-b < q_i \leq p_i \leq 2n-a$, or $b=n$ and $a < q_i \leq p_i \leq 2n-a$. Each
row of $M_\sP^\sQ$ belongs to a unique component, and two rows can be correlated
only if they belong to the same component. Any component $(a,b)$ contains at
most $b-a$ rows. The component $(a,b)$ is called a \emph{quadratic component} if
$b$ is a double-cut, $b-a \geq 2$, and $(a,b)$ contains $b-a$ rows.}

\targetsec{complint}{%
Let $\bP_\sP^\sQ \subset \bP(E)$ denote the closed subvariety defined by the linear
equations $x_{2n+1-s} = 0$ for all lone stars $s$ of $M_\sP^\sQ$, as well as the
quadratic equations
\[
  x_{a+1} x_{2n-a} + \dots + x_b x_{2n+1-b} = 0
\]
given by all quadratic components $(a,b)$ of $M_\sP^\sQ$. Using that the quadratic
equations involve pairwise disjoint sets of variables, it follows that $\bP_\sP^\sQ$
is an irreducible complete intersection in $\bP(E)$ with rational singularities.}

\begin{example}
  Let $Y = \SG(8,20)$ and define $\sQ = \{1,2,4,6,9,11,16,18\}$ and $\sP =
  \{2,3,7,8,11,12,16,20\}$. The shape of non-zero entries in $M_\sP^\sQ$ is given by
  the diagram:
  \[
    \newcommand{\nd}{\cdot}
    \newcommand{\nS}{\star}
    \newcommand{\md}{&\cdot}
    \newcommand{\mS}{&\star}
    \left[
      \begin{array}{ccc|ccccc|cccc|ccccc|ccc}
        \nS\mS\md\md\md\md\md\md\md\md\md\md\md\md\md\md\md\md\md\md\\
        \nd\mS\mS\md\md\md\md\md\md\md\md\md\md\md\md\md\md\md\md\md\\
        \nd\md\md\mS\mS\mS\mS\md\md\md\md\md\md\md\md\md\md\md\md\md\\
        \nd\md\md\md\md\mS\mS\mS\md\md\md\md\md\md\md\md\md\md\md\md\\
        \nd\md\md\md\md\md\md\md\mS\mS\mS\md\md\md\md\md\md\md\md\md\\
        \nd\md\md\md\md\md\md\md\md\md\mS\mS\md\md\md\md\md\md\md\md\\
        \nd\md\md\md\md\md\md\md\md\md\md\md\md\md\md\mS\md\md\md\md\\
        \nd\md\md\md\md\md\md\md\md\md\md\md\md\md\md\md\md\mS\mS\mS\\
      \end{array}
    \right]
  \]
  Here we ignore that the lone star in column $16$ forces the entry in column
  $5$ to vanish. The double-cuts of $M_\sP^\sQ$ are indicated with vertical
  lines. The components of $M_\sP^\sQ$ are $(0,3)$, $(3,8)$, and $(8,10)$. The
  component $(0,3)$ is quadratic and contains rows $1$, $2$, and $8$; component
  $(3,8)$ contains rows $3$, $4$, and $7$; and component $(8,10)$ contains rows
  $5$ and $6$. All rows except row $7$ are solvable, and rows $3$, $4$, and $5$
  are movable. The complete intersection $\bP_\sP^\sQ$ is given
  by
  \[
    \bP_\sP^\sQ \,=\, Z(x_5\,,\, x_1 x_{20} + x_2 x_{19} + x_3 x_{18})
    \,\subset\, \bP^{19} \,.
  \]
\end{example}

Our main result about perpendicular incidences is the following theorem, which
will be proved at the end of this section.

\begin{thm}\label{thm:oimain}%
  Let $\sQ \leq \sP$ be Schubert symbols for $Y = \SG(m,2n)$. Then $f$ restricts to
  a surjective morphism $f : S_\sP^\sQ \to \bP_\sP^\sQ$ with rational general fibers.
\end{thm}

The analogue of \Theorem{oimain} with $S \subset Y \times \bP(E)$ defined by the
condition $L \subset V$ has been established in \cite{buch.kresch.ea:quantum,
buch.ravikumar:pieri, ravikumar:triple*1}. When $Y = \LG(n,2n)$ is a Lagrangian
Grassmannian, the conditions $V \subset L^\perp$ and $L \subset V$ are
equivalent, so this case of \Theorem{oimain} is equivalent to
\cite[Lemma~5.2]{buch.ravikumar:pieri}. One new complication in our case is that
$S$ is not a flag variety, so the map $f : S_\sP^\sQ \to \bP_\sP^\sQ$ is not a
projection from a Richardson variety, as studied in e.g.\
\cite{billey.coskun:singularities, knutson.lam.ea:projections,
buch.chaput.ea:positivity}.

\begin{lemma}\label{lemma:midcross}%
  Let $\sQ \leq \sP$ be Schubert symbols for $\SG(m,2n)$ and let $1 \leq k \leq m$.
  If $q_k \leq n < p_k$, then row $k$ of $M_\sP^\sQ$ is movable.
\end{lemma}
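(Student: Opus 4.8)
The claim is that if $q_k \le n < p_k$, then row $k$ of $M_P^Q$ is movable, i.e.\ $\dim({Y'}_{P'}^{Q'}) < \dim(Y_P^Q)$, where $Q' = Q\ssm\{q_k\}$ and $P' = P\ssm\{p_k\}$. By \Remark{dimdiff}, this dimension difference equals $(p_k - q_k)$ minus the number of rows correlated to row $k$, namely the number of $j \ne k$ with $q_j + q_k < 2n+1 < p_j + p_k$. So the plan is to show directly that this count of correlated rows is strictly less than $p_k - q_k$.

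First I would observe that a row $j$ correlated to row $k$ satisfies $q_j < 2n+1-q_k$ and $p_j > 2n+1-p_k$; combined with $q_j \le p_j$, the hypothesis $q_k \le n < p_k$ forces $2n+1-p_k \le n < n+1 \le 2n+1-q_k$, so the interval $(2n+1-p_k,\ 2n+1-q_k)$ that $p_j$ (resp.\ $q_j$) must avoid/land in straddles $n$. The key combinatorial fact to extract is an injection from the set of correlated rows $j$ into a set of size $p_k - q_k - 1$ (or at least of size $< p_k-q_k$). A natural candidate: since $q_k \le n < p_k$, the indices $q_k, q_k+1, \dots, p_k$ include $n$ and $n+1$, and the symplectic condition $p + p'' \ne 2n+1$ on Schubert symbols means $P$ cannot contain both an index and its ``partner'' $2n+1-p$. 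I would exploit that the $p_j$ for correlated rows must lie in $[2n+2-p_k,\, 2n]$ while being distinct elements of $P$, and simultaneously $q_j$ must lie in $[1,\, 2n-q_k]$; counting available slots and using that the pair $\{q_k,\dots,p_k\}$ already ``uses up'' at least one slot near the center (because it contains both $n$ and $n+1$, whose partners $n+1$ and $n$ are also in the block and hence unavailable), I can shave off at least one from the naive bound $p_k-q_k$.

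More concretely, the cleanest route is probably this: the rows correlated to row $k$ are exactly those $j$ with $q_j+q_k < 2n+1 < p_j+p_k$, which by monotonicity of $P,Q$ form a contiguous block of row indices; each such row contributes a constraint, and \Corollary{fullrank} (applied in the solvable case) tells us these constraints are independent. The number of constraints is at most the number of columns available to row $k$ minus one precisely when the block $[q_k,p_k]$ is ``self-dual enough'' to contain a fixed point of the involution $t \mapsto 2n+1-t$ — but this involution has no fixed point on integers, so instead $[q_k,p_k]$ contains the $2$-cycle $\{n,n+1\}$, and I would argue that one of the perpendicularity equations \eqn{perp} on row $k$ becomes degenerate or that one correlated row $j$ would force $q_j = p_j$ in a forbidden way. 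The upshot: at least one of the $p_k-q_k$ potential constraints is missing, so the count of correlated rows is $\le p_k - q_k - 1$, giving $\dim(Y_P^Q) - \dim({Y'}_{P'}^{Q'}) \ge 1$, which is exactly movability.

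\textbf{Main obstacle.} The delicate point is pinning down \emph{why} the number of correlated rows is strictly less than $p_k - q_k$ rather than merely $\le p_k - q_k$ — i.e.\ producing the ``extra'' unconstrained slot. I expect this to hinge on a careful bookkeeping argument about which indices can simultaneously serve as $q_j$ and $p_j$ for a correlated row, using the Schubert-symbol condition $p+p'' \ne 2n+1$ and the fact that $q_k \le n < p_k$ places the center of the form strictly inside the support $[q_k,p_k]$ of row $k$. An alternative, possibly slicker, approach would be geometric: show directly that for $V \in \oY_P^Q$ one can move the line $V \cap E_{[q_k,p_k]}$ within $E_{[q_k,p_k]}$ while staying isotropic and perpendicular to the other rows, using that $q_k \le n < p_k$ guarantees the restricted symplectic form on $E_{[q_k,p_k]}$ is nondegenerate (or at least nonzero), so the perpendicular space has the dimension needed for a positive-dimensional family — this bypasses the explicit count but requires care with the boundary conditions $a_{k,q_k}=1$, $a_{k,p_k}\ne 0$.
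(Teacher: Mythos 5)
Your proposal correctly reduces the lemma to showing that at most $p_k-q_k-1$ rows of $M_P^Q$ are correlated to row $k$, and you correctly identify that the hypothesis $q_k\le n<p_k$ (placing $\{n,n+1\}$ inside $[q_k,p_k]$) is what should produce the strict inequality. But you do not carry out the count --- the ``Main obstacle'' paragraph concedes this --- and the intermediate claims you sketch would not get you there. You constrain $p_j$ to $[2n+2-p_k,2n]$ and $q_j$ to $[1,2n-q_k]$ for \emph{every} correlated row $j$; these ranges are far too large ($[2n+2-p_k,2n]$ alone can accommodate as many as $n$ elements of $P$, so the resulting bound is vacuous since $m\le n$). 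The step you are missing is to split on $j<k$ versus $j>k$ and observe that in each case only one of the two correlation inequalities is binding. For $j<k$, monotonicity gives $q_j<q_k\le n$, so $q_j+q_k<2n+1$ holds automatically, and the binding constraint is the two-sided one $2n+1-p_k<p_j<p_k$; for $j>k$, $p_j>p_k>n$ makes $p_j+p_k>2n+1$ automatic, and the binding constraint is $q_k<q_j<2n+1-q_k$. Each of these open intervals is symmetric about $n+\tfrac12$ and splits into exactly $p_k-n-1$ (resp.\ $n-q_k$) pairs $\{x,2n+1-x\}$; the Schubert-symbol condition lets $P$ (resp.\ $Q$) contain at most one element of each pair, so the two cases contribute at most $p_k-n-1$ and $n-q_k$ correlated rows respectively, totalling $p_k-q_k-1$. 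That is the paper's entire proof.

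Two smaller problems. The suggestion that a correlated row ``would force $q_j=p_j$ in a forbidden way'' is not correct: a lone star $q_j=p_j=s$ with $2n+1-p_k<s<2n+1-q_k$ is a legal Schubert-symbol configuration and \emph{is} correlated to row $k$. And invoking \Corollary{fullrank} does not help, since that corollary takes solvability (the weak inequality you are trying to strengthen) as a hypothesis, and in any case linear independence of the constraints does not change how many of them there are. Your alternative geometric sketch --- that the restricted form on $E_{[q_k,p_k]}$ is nonzero because $n,n+1\in[q_k,p_k]$, so the line $V\cap E_{[q_k,p_k]}$ can be moved in a positive-dimensional family --- has the right germ of an idea, but it is not developed far enough to assess as a proof.
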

\begin{proof}
  Assume that row $j$ is correlated to row $k$. If $j < k$, then $2n+1-p_k < p_j
  < p_k$, which holds for at most $p_k-n-1$ rows $j$. If $j > k$, then $q_k <
  q_j < 2n+1-q_k$, which holds for at most $n-q_k$ rows $j$. It follows that row
  $k$ is correlated to at most $p_k-q_k-1$ rows.
\end{proof}

\begin{prop}\label{prop:nonmovable}%
  Let $\sQ \leq \sP$ be Schubert symbols for $Y = \SG(m,2n)$, and let $(a,b)$ be a
  component of $M_\sP^\sQ$ with $b-a \geq 2$. Then $(a,b)$ is a quadratic component
  if and only if no row contained in $(a,b)$ is movable. In this case all rows
  contained in $(a,b)$ are solvable, and $M_\sP^\sQ$ has no cuts $c$ with $a < c <
  b$ or $\,2n-b < c < 2n-a$.
\end{prop}
\begin{proof}
  Since two rows of $M_\sP^\sQ$ can be correlated only if they belong to the same
  component, we may assume that $(a,b) = (0,n)$ is the only component of
  $M_\sP^\sQ$. By \Lemma{midcross} we may further assume that $n$ is a cut. By
  replacing $M_\sP^\sQ$ with $M_{\sQ^\vee}^{\sP^\vee}$, if necessary, we may also assume
  that $p_m = 2n$. Set $r = p_m-q_m \geq 1$. If row $m$ of $M_\sP^\sQ$ is not
  movable, then since $1 \notin \sP$ and $r+1 = 2n+1-q_m \notin \sQ$, we must have
  $q_i = i < p_i$ for $1 \leq i \leq r$. The same conclusion holds if $(0,n)$ is
  a quadratic component of $M_\sP^\sQ$, since in this case we have $x \in \sQ$ or
  $2n+1-x \in \sQ$ for all $x \in [1,n]$. Set $\sQ' = (\sQ \ssm \{r,q_m\}) \cup
  \{r+1,q_m+1\}$, so that the shape of $M_\sP^{\sQ'}$ is obtained from the shape of
  $M_\sP^\sQ$ by removing the leftmost entry from rows $r$ and $m$. Then $M_\sP^\sQ$ and
  $M_\sP^{\sQ'}$ have the same pairs of correlated rows, except that rows $r$ and
  $m$ are correlated in $M_\sP^\sQ$ but not in $M_\sP^{\sQ'}$. It follows that any row
  is movable in $M_\sP^\sQ$ if and only if it is movable in $M_\sP^{\sQ'}$, and the same
  holds with movable replaced by solvable. The component $(0,n)$ is quadratic if
  and only if $m=n$. Since $M_\sP^{\sQ'}$ has no empty components, $m=n$ holds if
  and only if all components of $M_\sP^{\sQ'}$ are quadratic or lone stars. By
  induction on $\sum_{i=1}^m (p_i-q_i)$, this holds if and only if $M_\sP^{\sQ'}$
  has no movable rows, which proves the first claim. Assuming that $(0,n)$ is a
  quadratic component, it also follows by induction that all rows of $M_\sP^\sQ$ are
  solvable. Noting that all double-cuts of $M_\sP^{\sQ'}$ belong to the set $\{0, r,
  n\}$, it follows by induction that all cuts of $M_\sP^{\sQ'}$ belong to $\{0, r,
  n, 2n-r, 2n\}$. The last claim follows from this since $r$ and $2n-r$ are not
  cuts of $M_\sP^\sQ$.
\end{proof}

\begin{cor}\label{cor:remsolv}%
  Let $\sQ \leq \sP$ be Schubert symbols, and assume that row $k$ in $M_\sP^\sQ$ is
  movable. Then $\bP_\sP^\sQ = \bP_{\sQ'}^{\sP'}$, where $\sQ' = \sQ \ssm \{ q_k \}$ and $\sP'
  = \sP \ssm \{ p_k \}$.
\end{cor}
\begin{proof}
  This holds because a movable row cannot be a lone star and cannot belong to a
  quadratic component by \Proposition{nonmovable}.
\end{proof}

\targetsec{M-hat}{%
Given Schubert symbols $\sQ \leq \sP$ for $Y = \SG(m,2n)$, define the variety
\[
  \wh M_\sP^\sQ = \{ (A,u) \in M_\sP^\sQ \times E \mid A \perp u \} \,,
\]
where $A \perp u$ indicates that $u$ is perpendicular to all rows of $A$. The
variety $\wh M_\sP^\sQ$ is irreducible with $\dim(\wh M_\sP^\sQ) = \dim(M_\sP^\sQ) + 2n-m$.}

\begin{prop}\label{prop:delmovable}%
  Let $\sQ \leq \sP$ be Schubert symbols for $Y = \SG(m,2n)$ and assume that the
  $k$-th row of $M_\sP^\sQ$ is movable. Set $\sQ' = \sQ \ssm \{ q_k \}$, $\sP' = \sP \ssm \{
  p_k \}$, and $r = \dim(M_\sP^\sQ) - \dim(M_{\sP'}^{\sQ'}) > 0$. Let $\pi : \wh M_\sP^\sQ
  \to \wh M_{\sP'}^{\sQ'}$ be the projection that forgets row $k$ in its first
  argument. There exists a morphism $\phi : \wh M_\sP^\sQ \to \C^{r-1}$, given by
  projection to $r-1$ of the entries of the $k$-th row of $M_\sP^\sQ$, such that the
  morphism $\pi \times \phi : \wh M_\sP^\sQ \to \wh M_{\sP'}^{\sQ'} \times \C^{r-1}$ is
  birational.
\end{prop}
\begin{proof}
  By \Corollary{fullrank} we can choose $A \in M_\sP^\sQ$ such that the submatrix
  $A[k]$ of constraints on row $k$ has linearly independent rows. The number of
  rows in $A[k]$ is equal to $p_k-q_k-r$ by \Remark{dimdiff}. We can therefore
  choose a vector
  \[
    (u_{2n+1-p_k}, \dots, u_{2n+1-q_k}) \in \C^{p_k-q_k+1}
  \]
  which is perpendicular to the $k$-th row of $A$ and not in the row span of
  $A[k]$. Using that $a_{i,q_i}=1$ and $a_{i,p_i} \neq 0$ for each row $i$, we
  can extend this vector to $u \in E$, such that $u$ is perpendicular to all
  rows of $A$. Let $A' \in M_{\sP'}^{\sQ'}$ be the result of removing the $k$-th row
  from $A$. Then the fiber of $\pi : \wh M_\sP^\sQ \to \wh M_{\sP'}^{\sQ'}$ over
  $(A',u)$ contains $(A,u)$, so it is not empty. This fiber can be identified
  with the set of vectors $(1, v_{q_k+1}, \dots, v_{p_k})$, with $v_{p_k} \neq
  0$, which are perpendicular to both $A[k]$ and $(u_{2n+1-p_k}, \dots,
  u_{2n+1-q_k})$. Therefore the fiber has dimension $r-1 = \dim(\wh M_\sP^\sQ) -
  \dim(\wh M_{\sP'}^{\sQ'})$. Since $\wh M_{\sP'}^{\sQ'}$ is irreducible, this implies
  that $\pi : \wh M_\sP^\sQ \to \wh M_{\sP'}^{\sQ'}$ is dominant. It also follows that
  $(A,u)$ is determined by $(A',u)$ together with some collection of $r-1$
  entries $a_{k,j}$ from the $k$-th row of $A$. Since this holds whenever a
  particular minor in $(A',u)$ is non-zero, we deduce that $(A,u)$ is determined
  by $(A',u)$ and the same $r-1$ entries from row $k$, for all points $(A,u)$ in
  a dense open subset of $\wh M_\sP^\sQ$. The result follows from this.
\end{proof}

\targetsec{S-prime}{%
Assume that $c \in [1,n-1]$ is a double-cut of $M_\sP^\sQ$, and set $\sQ' = \sQ \cap
[c+1,2n-c]$, $\sP' = \sP \cap [c+1,2n-c]$, $\sQ'' = \sQ \ssm \sQ'$, and $\sP'' = \sP \ssm \sP'$.
Set $m' = \#\sP'$, $Y' = \SG(m',2n)$, $m'' = \#\sP''$, $Y'' = \SG(m'',2n)$, and let
$S' \subset Y' \times \bP(E)$ and $S'' \subset Y'' \times \bP(E)$ be the
corresponding perpendicular incidence varieties, with projections $f' : S' \to
\bP(E)$ and $f'' : S'' \to \bP(E)$. Since we have $\bP_\sP^\sQ = \bP_{\sP'}^{\sQ'} \cap
\bP_{\sP''}^{\sQ''}$, the following lemma shows that \Theorem{oimain} can be proved
under the assumption that $M_\sP^\sQ$ has only one component $(0,n)$.}

\begin{lemma}\label{lemma:richprod}%
  The map $(V',V'') \mapsto V' \oplus V''$ is an isomorphism ${Y'}_{\sP'}^{\sQ'}
  \times {Y''}_{\sP''}^{\sQ''} \cong Y_\sP^\sQ$, and we have $f(S_\sP^\sQ) =
  f'({S'}_{\sP'}^{\sQ'}) \cap f''({S''}_{\sP''}^{\sQ''})$. For all points $L \in
  f(S_\sP^\sQ)$, the fiber of $f : S_\sP^\sQ \to f(S_\sP^\sQ)$ over $L$ is the product of
  the fibers of $f' : {S'}_{\sP'}^{\sQ'} \to f'({S'}_{\sP'}^{\sQ'})$ and $f'' :
  {S''}_{\sP''}^{\sQ''} \to f''({S''}_{\sP''}^{\sQ''})$ over $L$.
\end{lemma}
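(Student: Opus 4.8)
The plan is to deduce the whole lemma from its first assertion — that $(V',V'')\mapsto V'\oplus V''$ is an isomorphism ${Y'}_{P'}^{Q'}\times{Y''}_{P''}^{Q''}\cong Y_P^Q$ — since the statements about $f$, $f'$ and $f''$ then follow formally, using only the incidence description of Richardson varieties in $\SG(m,2n)$ recalled at the start of this section; in particular, \Theorem{param} is not needed. Since $c$ and $2n-c$ are cuts of $M_P^Q$, the defining intervals $[q_i,p_i]$ partition the rows into three consecutive bands: rows $i\in[1,k_1]$ with $p_i\le c$; rows $i\in[k_1+1,k_2]$ with $c<q_i\le p_i\le 2n-c$; and rows $i\in[k_2+1,m]$ with $2n-c<q_i\le p_i$. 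Thus $P'=\{p_{k_1+1},\dots,p_{k_2}\}$ and $Q'$ are the middle bands, of size $m'=k_2-k_1$, and $P''$, $Q''$ are the two outer bands, of size $m''=k_1+m-k_2$. Put $E_1=E_{[c+1,2n-c]}$ and $E_2=E_{[1,c]}\oplus E_{[2n-c+1,2n]}$; since the involution $t\mapsto 2n+1-t$ preserves both index sets, $E=E_1\oplus E_2$ is an orthogonal direct sum of symplectic subspaces, and every coordinate subspace $E_{[1,b]}$ or $E_{[a,2n]}$ is the direct sum of its intersections with $E_1$ and $E_2$.

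First I would show that each $V\in Y_P^Q$ splits as $V=(V\cap E_1)\oplus(V\cap E_2)$ with $\dim(V\cap E_1)=m'$ and $\dim(V\cap E_2)=m''$. Applying the inequalities defining $Y_P$ and $Y^Q$ at the cut $c$ gives $\dim(V\cap E_{[1,c]})\ge k_1$ and $\dim(V\cap E_{[c+1,2n]})\ge m-k_1$; as these subspaces of $V$ have disjoint supports and dimensions summing to $\dim V$, it follows that $V=(V\cap E_{[1,c]})\oplus(V\cap E_{[c+1,2n]})$ with equality of dimensions, and likewise $V=(V\cap E_{[1,2n-c]})\oplus(V\cap E_{[2n-c+1,2n]})$ with $\dim(V\cap E_{[1,2n-c]})=k_2$ from the cut $2n-c$. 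Intersecting these two decompositions yields $V\cap E_1=V\cap E_{[1,2n-c]}\cap E_{[c+1,2n]}$ of dimension $m'$, hence $V=(V\cap E_1)\oplus(V\cap E_2)$ with $\dim(V\cap E_2)=m''$. A short computation splitting $\dim(V\cap E_{[1,b]})\ge\#(P\cap[1,b])$ and $\dim(V\cap E_{[a,2n]})\ge\#(Q\cap[a,2n])$ across the bands of columns then gives $V\cap E_1\in{Y'}_{P'}^{Q'}$ and $V\cap E_2\in{Y''}_{P''}^{Q''}$ — for instance $\dim((V\cap E_1)\cap E_{[1,b]})=\dim(V\cap E_{[1,b]})-k_1\ge\#(P'\cap[1,b])$ for $b\le 2n-c$. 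Conversely, for $V'\in{Y'}_{P'}^{Q'}$ and $V''\in{Y''}_{P''}^{Q''}$ the incidence conditions evaluated at the band boundaries force $V'\subset E_1$ and $V''\subset E_2$, so $V'\perp V''$, the space $V'\oplus V''$ is isotropic of dimension $m$, and splitting each $E_{[1,b]}$ and $E_{[a,2n]}$ along $E=E_1\oplus E_2$ shows $V'\oplus V''\in Y_P^Q$. The maps $V\mapsto(V\cap E_1,V\cap E_2)$ and $(V',V'')\mapsto V'\oplus V''$ are mutually inverse bijections, and both are morphisms because the intersection dimensions involved are constant on the varieties in question; this proves the first assertion. (If $m'=0$ or $m''=0$ the assertion is trivial: one of $Y'$, $Y''$ is a point and the other is $Y$.)

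Given this, the remaining statements are formal. For $L\in\bP(E)$ and $V=V'\oplus V''\in Y_P^Q$ we have $V\subset L^\perp$ if and only if $V'\subset L^\perp$ and $V''\subset L^\perp$, and $V'$, $V''$ vary independently; hence $L\in f(S_P^Q)$ if and only if $L\in f'({S'}_{P'}^{Q'})$ and $L\in f''({S''}_{P''}^{Q''})$, which is the identity $f(S_P^Q)=f'({S'}_{P'}^{Q'})\cap f''({S''}_{P''}^{Q''})$. For such an $L$, the fiber $f^{-1}(L)\cap S_P^Q$, identified with $\{V\in Y_P^Q\mid V\subset L^\perp\}$, decomposes under the isomorphism above as $\{V'\in{Y'}_{P'}^{Q'}\mid V'\subset L^\perp\}\times\{V''\in{Y''}_{P''}^{Q''}\mid V''\subset L^\perp\}$, which is precisely the product of the fibers of $f'$ and $f''$ over $L$.

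The main obstacle is the first assertion, specifically the bookkeeping needed to check simultaneously that $V\cap E_1$, $V\cap E_2$ and $V'\oplus V''$ satisfy the relevant incidence inequalities. None of this is deep — it is a careful distribution of the conditions $\dim(V\cap E_{[1,b]})\ge\#(P\cap[1,b])$ and $\dim(V\cap E_{[a,2n]})\ge\#(Q\cap[a,2n])$ between the two bands of columns, using repeatedly that $c$ and $2n-c$ are cuts — but it is the only place where real work happens.
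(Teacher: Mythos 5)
Your proof is correct and takes essentially the same route as the paper: both decompose $E$ into the orthogonal sum $E' \oplus E''$ determined by the double-cut, verify that each Richardson factor lives in the corresponding summand and that each $V \in Y_P^Q$ splits accordingly, and then deduce the remaining claims formally from the fact that $V' \oplus V'' \subset L^\perp$ holds if and only if both $V' \subset L^\perp$ and $V'' \subset L^\perp$ hold. The paper's version is terser, citing only the containments $V' \subset E'$ and $V'' \subset E''$ together with the dimension identities for $V \cap E'$, $V \cap E_{[1,c]}$, and $V \cap E_{[2n-c+1,2n]}$, while leaving the bookkeeping you spell out to the reader.
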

\begin{proof}
  Set $E' = E_{[c+1,2n-c]}$ and $E'' = E_{[1,c] \cup [2n-c+1,2n]}$. Using that
  $V' \subset E'$ holds for all $V' \in {Y'}_{\sP'}^{\sQ'}$, and $V'' \subset E''$
  holds for all $V'' \in {Y''}_{\sP''}^{\sQ''}$, it follows that the map
  ${Y'}_{\sP'}^{\sQ'} \times {Y''}_{\sP''}^{\sQ''} \to Y_\sP^\sQ$ is well defined. The
  inverse map $V \mapsto (V\cap E', V\cap E'')$ is well defined because $\dim(V
  \cap E') = m'$, $\dim(V \cap E_{[1,c]}) = \#(\sP \cap [1,c])$, and $\dim(V \cap
  E_{[2n-c+1,2n]}) = \#(\sP \cap [2n-c+1,2n])$ holds for all $V \in Y_\sP^\sQ$. This
  proves the first claim. The remaining claims follow because $V' \oplus V''
  \subset L^\perp$ is equivalent to $V' \subset L^\perp$ and $V'' \subset
  L^\perp$.
\end{proof}

\begin{proof}[Proof of \Theorem{oimain}]
  We may assume that $(0,n)$ is the only component of $M_\sP^\sQ$ by
  \Lemma{richprod}. If $M_\sP^\sQ$ has no movable rows, then
  \Proposition{nonmovable} implies that $m=n$, so the claim follows from
  \cite[Lemma~5.2]{buch.ravikumar:pieri}. Otherwise $M_\sP^\sQ$ has at least one
  movable row, say row $k$. Set $Y' = \SG(m-1,2n)$, $\sQ' = \sQ \ssm \{ q_k \}$, $\sP'
  = \sP \ssm \{ p_k \}$, and $r = \dim(M_\sP^\sQ) - \dim(M_{\sP'}^{\sQ'})$. Let $S \subset
  Y \times \bP(E)$ and $S' \subset Y' \times \bP(E)$ be the perpendicular
  incidence varieties, with projections $f : S \to \bP(E)$ and $f' : S' \to
  \bP(E)$. It follows from \Proposition{delmovable} that $f(S_\sP^\sQ) =
  f'({S'}_{\sP'}^{\sQ'})$, and for all points $L$ in a dense open subset of
  $f(S_\sP^\sQ)$, the fiber $f^{-1}(L) \cap S_\sP^\sQ$ is birational to $({f'}^{-1}(L)
  \cap {S'}_{\sP'}^{\sQ'}) \times \C^{r-1}$. The result therefore follows by
  induction on $m$.
\end{proof}


\section{Gromov-Witten invariants of Pieri type}\label{sec:pierigw}%

\subsection{Incidences of projected Richardson varieties}\label{sec:incprojrich}%

\targetsec{ZSF}{%
Let $X = \LG(n,2n)$ be a Lagrangian Grassmannian and $Y = \SG(m,2n)$ a
symplectic Grassmannian. Set $Z = \SF(m,n;2n)$ and let $p : Z \to X$ and $q : Z
\to Y$ be the projections. We also set $\wh X = \SF(1,n;2n)$, with projections
$\eta : \wh X \to \bP^{2n-1}$ and $\pi : \wh X \to X$. Our computation of
Gromov-Witten invariants of $X$ is based on the following result.}

\begin{thm}\label{thm:projtriv}%
  Let $\sQ \leq \sP$ be Schubert symbols for $Y = \SG(m,2n)$, and let $X_\sP^\sQ =
  p(q^{-1}(Y_\sP^\sQ))$ be the corresponding projected Richardson variety in $X =
  \LG(n,2n)$. Then $\eta$ restricts to a cohomologically trivial morphism $\eta
  : \pi^{-1}(X_\sP^\sQ) \to \bP_\sP^\sQ$.
\end{thm}
\begin{proof}
  Define the non-singular varieties $S = \{ (K,L) \in Y \times \bP^{2n-1} \mid K
  \subset L^\perp \}$ and $\wh Z = Z \times_X \wh X = \{ (K,V,L) \in Y \times X
  \times \bP^{2n-1} \mid K \subset V \supset L \}$. Consider the following
  commutative diagram, where all morphisms are the natural projections.
  \[
    \xymatrix{
      S \ar[drrr]^f \ar[dddr]_g \\
      & \wh Z \ar[r]_{\wh p} \ar[d]^\tau \ar[ul]_\sigma &
      \wh X \ar[r]_{\eta\ \ \ } \ar[d]^\pi & \bP^{2n-1} \\
      & Z \ar[r]_p \ar[d]^q & X \\
      & Y
    }
  \]
  Since the morphisms of this diagram are equivariant for the action of
  $\Sp(2n)$, and all targets other than $S$ are flag varieties of $\Sp(2n)$, it
  follows that all morphisms other than $\sigma$ are locally trivial fibrations
  with non-singular fibers \cite[Prop.~2.3]{buch.chaput.ea:finiteness}. Let
  $Z_\sP^\sQ$, $\wh Z_\sP^\sQ$, and $S_\sP^\sQ$ be the inverse images of $Y_\sP^\sQ$ in $Z$,
  $\wh Z$, and $S$, respectively, and set $\wh X_\sP^\sQ = \pi^{-1}(X_\sP^\sQ)$. Since
  $Y_\sP^\sQ$ and $X_\sP^\sQ$ have rational singularities \cite{brion:positivity,
  billey.coskun:singularities, knutson.lam.ea:projections}, it follows that
  $Z_\sP^\sQ$, $\wh Z_\sP^\sQ$, $S_\sP^\sQ$, and $\wh X_\sP^\sQ$ have rational singularities as
  well.

  All fibers of $\sigma$ are rational. In fact, for $(K,L) \in S$ we have
  $\sigma^{-1}(K,L) = \LG(m',(K+L)^\perp/(K+L))$, where $m' = n - \dim(K+L)$.
  Since $\wh Z_\sP^\sQ = \sigma^{-1}(S_\sP^\sQ)$, this implies that $\sigma : \wh Z_\sP^\sQ
  \to S_\sP^\sQ$ is cohomologically trivial \cite[Thm.~3.1]{buch.mihalcea:quantum}.
  Since $f : S_\sP^\sQ \to \bP_\sP^\sQ$ is cohomologically trivial by \Theorem{oimain},
  it follows that $\eta \wh p = f \sigma : \wh Z_\sP^\sQ \to \bP_\sP^\sQ$ is
  cohomologically trivial \cite[Lemma~2.4]{buch.chaput.ea:projected}.

  Using that the outer rectangle and the right square of the following diagram
  are fiber squares, it follows that $\wh p : \wh Z_\sP^\sQ \to \wh X_\sP^\sQ$ is the
  base extension of $p : Z_\sP^\sQ \to X_\sP^\sQ$ along $\pi$.
  \[
    \xymatrix{
      \wh Z_\sP^\sQ \ar[r]_{\wh p} \ar[d]^\tau &
      \wh X_\sP^\sQ \ar[r]_\subset \ar[d]^\pi & \wh X \ar[d]^\pi \\
      Z_\sP^\sQ \ar[r]_p & X_\sP^\sQ \ar[r]_\subset & X
    }
  \]
  This implies that $\wh p : \wh Z_\sP^\sQ \to \wh X_\sP^\sQ$ is cohomologically
  trivial, for example because its general fibers are Richardson varieties by
  \cite[Cor.~2.11]{buch.chaput.ea:positivity}. It follows that $\eta : \wh
  X_\sP^\sQ \to \bP_\sP^\sQ$ is cohomologically trivial, which completes the
  proof.
\end{proof}

\subsection{Gromov-Witten invariants of Pieri type}\label{sec:ss:gwpieri}

The Schubert varieties in $X = \LG(n,2n)$ are indexed by shapes $\la \subset
\cP_X$. The Schubert symbol $\sP$ corresponding to $\la \subset \cP_X$ is obtained
as follows. The border of $\la$ forms a path from the upper-right corner of
$\cP_X$ to the diagonal. Number the steps of this path from $1$ to $n$, starting
from the upper-right corner. Then $\sP$ consists of the integers $i$ for which the
$i$-th step is horizontal, and the integers $2n+1-i$ for which the $i$-th step
is vertical. By observing that the map from shapes to Schubert symbols is
compatible with the Bruhat order, this description of the Schubert varieties in
$X$ follows from e.g. \cite[Lemma~2.9]{buch.samuel:k-theory}.

\begin{example}
  Let $X = \LG(7,14)$ and $\la = (7,4,2,1)$. Then $\la$ corresponds to the
  Schubert symbol $\sP = \{ 2, 3, 5, 8, 9, 11, 14 \}$.

  \begin{center}
    \begin{tikzpicture}[x=.4cm,y=.4cm]
      \def\zz{-- ++(-1,0) -- ++(0,1)}
      \draw (0,7) -- (7,7) -- (7,0) \zz\zz\zz\zz\zz\zz
      -- ++(-1,0) -- cycle;
      \draw [very thick] (7,7) -- ++(0,-1) -- ++(-2,0) -- ++(0,-1) -- ++(-1,0)
      -- ++(0,-2);
    \end{tikzpicture}
  \end{center}
\end{example}

Recall that a classical shape $\la \subset \cP_X$ is identified with the quantum
shape $I([X^\la]) = \la \cup I(1)$ in $\wh\cP_X$, and $\la[d]$ is the result of
shifting this shape by $d$ diagonal steps for each $d \in \Z$.

\targetsec{qskew}{%
Let $\la, \mu \subset \cP_X$ be shapes and $d \geq 0$ a degree. Then
$\Gamma_d(X_\la,X^\mu) \neq \emptyset$ if and only if $\mu \subset \la[d]$. When
this holds, we let $\la[d]/\mu$ be the skew shape in $\wh\cP_X$ of boxes in
$\la[d]$ that are not contained in $\mu$. Let $R(\la[d]/\mu)$ denote the size of
a maximal rim in this skew shape, and let $N(\la[d]/\mu)$ be the number of
connected components of $\la[d]/\mu$ that are disjoint from both of the
diagonals in $\wh\cP_X$. The following result interprets \Theorem{projtriv} when
the projected Richardson variety in $X$ is a curve neighborhood
$\Gamma_d(X_\la,X^\mu)$.}

\begin{cor}\label{cor:oigamma}%
  Let $X = \LG(n,2n)$ and let $\la, \mu \subset \cP_X$ be shapes such
  that\linebreak
  $\Gamma_d(X_\la,X^\mu) \neq \emptyset$. Set $\theta = \la[d]/\mu$. If
  $R(\theta) = n+1$, then $\eta(\pi^{-1}(\Gamma_d(X_\la,X^\mu))) = \bP^{2n-1}$.
  Otherwise $\eta(\pi^{-1}(\Gamma_d(X_\la,X^\mu)))$ is a complete intersection
  in $\bP^{2n-1}$ of\linebreak dimension $n+R(\theta)-1$, defined by $N(\theta)$
  quadratic equations and $n-R(\theta)-N(\theta)$ linear equations. Moreover,
  the restricted map $\eta : \pi^{-1}(\Gamma_d(X_\la,X^\mu)) \to
  \eta(\pi^{-1}(\Gamma_d(X_\la,X^\mu)))$ is cohomologically trivial.
\end{cor}
\begin{proof}
  Write $X_\la = X_\sP$ and $X^\mu = X^\sQ$, where $\sP = \{p_1 < \dots < p_n\}$ and
  $\sQ = \{q_1 < \dots < q_n\}$ are the Schubert symbols corresponding to $\la$
  and $\mu$. Then $q(p^{-1}(X_\la)) = Y_{\sP'}$ and $q(p^{-1}(X^\mu)) = Y^{\sQ'}$,
  where $\sP' = \{ p_{d+1}, \dots, p_n \}$ and $\sQ' = \{ q_1, \dots, q_{n-d} \}$,
  so we have $\Gamma_d(X_\la,X^\mu) = X_{\sP'}^{\sQ'}$. \Theorem{projtriv} shows
  that
  \[
    \eta : \pi^{-1}(\Gamma_d(X_\la,X^\mu)) \to \bP_{\sP'}^{\sQ'}
  \]
  is cohomologically trivial. It remains to show that $\bP_{\sP'}^{\sQ'}$ is a
  complete intersection defined by the expected equations. If $R(\theta) = n+1$,
  then we can make $d$ and $\la$ smaller and $\mu$ larger until we obtain
  $R(\theta)=n$ and $N(\theta)=0$. This will make $\Gamma_d(X_\la,X^\mu)$
  smaller, while the corollary still asserts that
  $\eta(\pi^{-1}(\Gamma_d(X_\la,X^\mu))) = \bP^{2n-1}$. We may therefore assume
  that $R(\theta) \leq n$, which implies that the borders of $\mu$ and $\la[d]$
  meet somewhere. In particular, $\mu$ has at least $d$ vertical steps, and
  $\la[d]$ has at least $d$ horizontal steps.

  Let $\ell(\mu)$ be the number of vertical steps of $\mu$. Then $\mu$ has
  $n-\ell(\mu)$ horizontal steps. Notice that, if $1 \leq k \leq n-\ell(\mu)$,
  then $q_k$ is the step number of the $k$-th horizontal step of $\mu$, while if
  $n-\ell(\mu) < k \leq n$, then $2n+1-q_k$ is the step number of the
  $(n+1-k)$-th vertical step of $\mu$. Since the starting point of $\mu$ is $d$
  boxes north-west of the starting point of $\la$, and the endpoint of $\mu$ is
  north-west of the endpoint of $\la[d]$, we have $\ell(\mu) \leq \ell(\la)+d$.
  The condition $R(\theta) \leq n$ implies that $\ell(\mu) \geq d$ and
  $\ell(\la) \leq n-d$.

  Write $\sP' = \{ p'_1, \dots, p'_{n-d} \}$ and $\sQ' = \{ q'_1, \dots, q'_{n-d}
  \}$, where $p'_i = p_{i+d}$ and $q'_i = q_i$. It follows from the construction
  of $\sP$ and $\sQ$ from $\la$ and $\mu$ that the rows in $M_{\sP'}^{\sQ'}$ are in
  bijection with some of the steps of $\la[d]$, and also with some of the steps
  of $\mu$. We will explain how to obtain the resulting bijection between steps
  of $\la[d]$ and $\mu$, and how to obtain the rows of $M_{\sP'}^{\sQ'}$ from the
  corresponding pairs of steps in $\la[d]$ and $\mu$. This will include drawing
  \emph{connectors} between the paired steps of $\la[d]$ and $\mu$, see
  \Example{skew2M}.

  Consider row $k$ of $M_{\sP'}^{\sQ'}$. Assume first that $d+k \leq n-\ell(\la)$.
  Then $k \leq n-\ell(\mu)$, $q'_k$ is the step number of the $k$-th horizontal
  step of $\mu$, and $p'_k$ is the step number of the $(d+k)$-th horizontal step
  of $\la[d]$. These steps of $\mu$ and $\la[d]$ are in the same column, and
  $p'_k-q'_k$ is the distance (number of boxes) between the two steps. We draw a
  vertical line segment (connector) from the $k$-th horizontal step of $\mu$ to
  the $(d+k)$-th horizontal step of $\la[d]$.

  Assume next that $k > n-\ell(\mu)$. Then $d+k > n-\ell(\la)$, $2n+1-q'_k$ is
  the step number of the $(n+1-k)$-th vertical step of $\mu$, and $2n+1-p'_k$ is
  the step number of the $(n-d+1-k)$-th vertical step of $\la[d]$. These steps
  of $\mu$ and $\la[d]$ are in the same row, and $p'_k-q'_k$ is the distance
  between the two steps. We draw a horizontal line segment (connector) from the
  $(n+1-k)$-th vertical step of $\mu$ to the $(n-d+1-k)$-th vertical step of
  $\la[d]$.

  We finally assume that $d+k > n-\ell(\la)$ and $k \leq n-\ell(\mu)$. Then
  $q'_k$ is the step number of the $k$-th horizontal step of $\mu$, and
  $2n+1-p'_k$ is the step number of the $(n-d+1-k)$-th vertical step of
  $\la[d]$. In this case, if we draw a vertical line segment going down from the
  horizontal step of $\mu$, and a horizontal line segment going to the left from
  the vertical step of $\la[d]$, then these line segments meet in a diagonal box
  of $\wh\cP_X$. In this case the connector representing row $k$ of
  $M_{\sP'}^{\sQ'}$ is obtained by connecting the two line segments, and $p'_k-q'_k$
  is the number of boxes this connector passes through.

  It follows from this description that the lone stars of $M_{\sP'}^{\sQ'}$
  correspond to steps shared by $\mu$ and $\la[d]$, and there are exactly $n -
  R(\theta) - N(\theta)$ such steps. It also follows that, if $\mu$ and $\la[d]$
  meet after $c$ steps, then $c$ is a double-cut of $M_{\sP'}^{\sQ'}$. The only
  other cuts of $M_{\sP'}^{\sQ'}$ are the integers in the set $[0,q'_1-1] \cup
  [p'_{n-d},2n]$. We deduce that any component of $\theta$ that is disjoint from
  both diagonals in $\wh\cP_X$ produces a quadratic component of $M_{\sP'}^{\sQ'}$.
  If a component of $\theta$ meets the SW diagonal of $\wh\cP_X$, then the
  corresponding component of $M_{\sP'}^{\sQ'}$ contains a row that crosses the
  middle, so this component is not quadratic. Finally, if a component of
  $\theta$ intersects the NE diagonal of $\wh\cP_X$, then the corresponding
  component $(a,b)$ of $M_{\sP'}^{\sQ'}$ has fewer than $b-a$ rows, so it is not
  quadratic. It follows that $M_{\sP'}^{\sQ'}$ has exactly $N(\theta)$ quadratic
  components.
\end{proof}

\begin{example}\label{example:skew2M}%
  Let $X = \LG(12,24)$, $\mu = (12,11,9,6,5)$, and $\la = (11,8,6,3,1)$, and
  $d=2$. Then $\theta = \la[d]/\mu$ is the skew shape between the two thick
  black paths in the following picture. The connectors of $\theta$ are colored
  pink. We have $R(\theta) = 10$ and $N(\theta) = 1$.\medskip
  \begin{center}
    \begin{tikzpicture}[x=.4cm,y=.4cm]
      \def\zz{-- ++(0,-1) -- ++(1,0)}
      \draw (-4,4) \zz\zz\zz\zz\zz\zz;
      \draw (3,9) -- ++(1,0) \zz\zz\zz\zz\zz\zz -- ++(0,-1);
      \draw [very thick] (5,7) -- ++(0,-2) -- ++(-1,0) -- ++(0,-1) -- ++(-2,0)
      -- ++(0,-2) -- ++(-4,0);
      \draw [very thick] (0,0) -- ++(0,1) -- ++(1,0) -- ++(0,1) -- ++(2,0)
      -- ++(0,1) -- ++(1,0) -- ++(0,1) -- ++(2,0) -- ++(0,1) -- ++(1,0);
      \tikzstyle{connect} = [line width=2pt,line cap=round,pink];
      \draw [connect] (4.5,5) -- ++(0,-1);
      \draw [connect] (3.5,4) -- ++(0,-1);
      \draw [connect] (2.5,4) -- ++(0,-2);
      \draw [connect] (1.5,2) -- ++(0,0);
      \draw [connect] (0.5,2) -- ++(0,-1);
      \draw [connect] (-.5,2) -- ++(0,-1) arc (180:270:.5);
      \draw [connect] (-1.5,2) arc (180:270:.5) -- ++(2,0);
      \draw [connect] (2,2.5) -- ++(1,0);
      \draw [connect] (2,3.5) -- ++(2,0);
      \draw [connect] (4,4.5) -- ++(2,0);
    \end{tikzpicture}
  \end{center}
  \medskip
  The shapes $\mu$ and $\la$ correspond to $\sQ =
  \{3,5,6,9,10,11,12,17,18,21,23,24\}$ and $\sP =
  \{1,3,4,6,8,9,11,13,15,18,20,23\}$. We obtain $\Gamma_d(X_\la,X^\mu) =
  X_{\sP'}^{\sQ'}$, where $\sQ'$ and $\sP'$ are determined by the shape of
  $M_{\sP'}^{\sQ'}$:
  \medskip
  \[
    \newcommand{\nd}{\cdot}
    \newcommand{\nS}{\star}
    \newcommand{\md}{&\cdot}
    \newcommand{\mS}{&\star}
    \left[
    \begin{array}{c|c|cc|cccc|c|cccccc|c|cccc|ccc|c}
      \nd\md\mS\mS\md\md\md\md\md\md\md\md\md\md\md\md\md\md\md\md\md\md\md\md\\
      \nd\md\md\md\mS\mS\md\md\md\md\md\md\md\md\md\md\md\md\md\md\md\md\md\md\\
      \nd\md\md\md\md\mS\mS\mS\md\md\md\md\md\md\md\md\md\md\md\md\md\md\md\md\\
      \nd\md\md\md\md\md\md\md\mS\md\md\md\md\md\md\md\md\md\md\md\md\md\md\md\\
      \nd\md\md\md\md\md\md\md\md\mS\mS\md\md\md\md\md\md\md\md\md\md\md\md\md\\
      \nd\md\md\md\md\md\md\md\md\md\mS\mS\mS\md\md\md\md\md\md\md\md\md\md\md\\
      \nd\md\md\md\md\md\md\md\md\md\md\mS\mS\mS\mS\md\md\md\md\md\md\md\md\md\\
      \nd\md\md\md\md\md\md\md\md\md\md\md\md\md\md\md\mS\mS\md\md\md\md\md\md\\
      \nd\md\md\md\md\md\md\md\md\md\md\md\md\md\md\md\md\mS\mS\mS\md\md\md\md\\
      \nd\md\md\md\md\md\md\md\md\md\md\md\md\md\md\md\md\md\md\md\mS\mS\mS\md\\
    \end{array}
    \right] \smallskip
  \]
  This diagram has $12 - R(\theta) - N(\theta) = 1$ lone stars, and $N(\theta) =
  1$ quadratic components. The unique quadratic component is $(4,8)$. The rows
  of $M_{\sP'}^{\sQ'}$ correspond to the connectors in $\theta$, see the proof of
  \Corollary{oigamma}. Rows 6, 7, and 10 are movable.
\end{example}

\targetsec{hfunc}{%
Consider a complete intersection $Y \subset \bP^{a+b}$ of dimension $b$, defined
by $a$ quadratic equations. The $K$-theory class of $Y$ is $[\cO_Y] =
(2H-H^2)^a$, where $H \in K(\bP^{a+b})$ is the hyperplane class. It follows that
the sheaf Euler characteristic of $Y$ is given by $\chi(\cO_Y) = h(a,b)$, where
$h : \N \times \Z \to \Z$ is defined by \cite[\S4]{buch.ravikumar:pieri}
\begin{equation}\label{eqn:hfcn}
  h(a,b) \,=\, \sum_{j=0}^b (-1)^j\, 2^{a-j} \binom{a}{j} \,.
\end{equation}
}
Here we set $\binom{a}{j} = 0$ unless $0 \leq j \leq a$. Notice that for $b \geq
a$ we have $h(a,b) = (2-1)^a = 1$, and $h(a,b)=0$ for $b<0$. We record for later
the identity
\begin{equation}\label{eqn:hbinom}%
  h(a+1,b) + h(a,b-1) = 2\, h(a,b) \,,
\end{equation}
which follows from the binomial formula. The following result is the quantum
generalization of \cite[Prop.~5.3]{buch.ravikumar:pieri}.

\begin{cor}\label{cor:pierigw}%
  The $K$-theoretic Gromov-Witten invariants of $X = \LG(n,2n)$ of Pieri type
  are given by $I_d(\cO_\la, \cO^\mu, \cO^p) = h(N(\theta), R(\theta)-p)$, with
  $\theta = \la[d]/\mu$.
\end{cor}
\begin{proof}
  Let $L \subset \bP^{2n-1}$ be the $B^-$-stable linear subspace of dimension
  $n-p$. Then $\pi : \eta^{-1}(L) \to X^p$ is a birational isomorphism, so
  $\cO^p = \pi_*(\eta^*([\cO_L]))$. Using
  \cite[Thm.~4.1]{buch.chaput.ea:projected}, the projection formula, and
  \Corollary{oigamma}, we obtain
  \[
    \begin{split}
      I_d(\cO_\la, \cO^\mu, \cO^p) \
      &= \ \euler{X}([\cO_{\Gamma_d(X_\la,X^\mu)}] \cdot \pi_*\eta^*[\cO_L]) \\
      &= \ \euler{\bP^{2n-1}}([\cO_{\eta(\pi^{-1}(\Gamma_d(X_\la,X^\mu)))}]
      \cdot [\cO_L]) \,.
    \end{split}
  \]
  If $R(\theta) \leq n$, then this is the sheaf Euler characteristic of a
  complete intersection of dimension $R(\theta)-p$ defined by $N(\theta)$
  quadratic equations as well as linear equations in $\bP^{2n-1}$, which proves
  the result. Finally, if $R(\theta)=n+1$, then $I_d(\cO_\la,\cO^\mu,\cO^p) =
  h(N(\theta),R(\theta)-p) = 1$, so the corollary also holds in this case.
\end{proof}

\subsection{Quantum multiplication by special Schubert classes}\label{sec:multspec}%

We finish this section by proving some preliminary formulas for quantum products
with special Schubert classes. We start with the undeformed product $\cO^p \odot
\cO^\mu$, see \Section{qktheory} or \cite[\S2.5]{buch.chaput.ea:chevalley}.

\targetsec{theta-circ}{%
Given a skew shape $\theta \subset \wh\cP_X$, let $\theta^\circ \subset \theta$
be the skew shape obtained by removing all maximal boxes from $\theta$ that do
not belong to the north-east diagonal of $\wh\cP_X$.}

\begin{center}
  \begin{tikzpicture}[x=.4cm,y=.4cm]
    \def\yy{-- ++(0,-1) -- ++(1,0)}
    \def\zz{-- ++(0,1) -- ++(-1,0)}
    \draw (11,12) \yy\yy\yy\yy\yy -- ++(0,-1);
    \draw (9,2) \zz\zz\zz\zz\zz -- ++(0,1);
    \draw [very thick] (12,10) -- ++(-2,0) -- ++(0,-1) -- ++(-2,0) -- ++(0,-1) -- ++(-1,0) -- ++(0,-2) -- ++(-1,0);
    \draw [very thick] (14,8) -- ++(-3,0) -- ++(0,-1) -- ++(-1,0) -- ++(0,-1)
    -- ++(-2,0) -- ++(0,-2);
    \draw (10,7) -- ++(1,0) -- ++(0,1) -- ++(-1,0) -- cycle -- ++(1,1) ++(-1,0) -- ++(1,-1);
    \draw (9,6) -- ++(1,0) -- ++(0,1) -- ++(-1,0) -- cycle -- ++(1,1) ++(-1,0) -- ++(1,-1);
    \draw (7,4) -- ++(1,0) -- ++(0,1) -- ++(-1,0) -- cycle -- ++(1,1) ++(-1,0) -- ++(1,-1);
  \end{tikzpicture}
\end{center}

\targetsec{cH-func}{%
For $p \in \Z$ we then define
\begin{equation}\label{eqn:Hdef}%
  \cH(\theta,p) \,=\, \sum_{\theta^\circ \subset \varphi \subset \theta}
  (-1)^{|\theta|-|\varphi|}\, h(N(\varphi),R(\varphi)-p) \,,
\end{equation}
the sum over all subsets $\varphi$ of $\theta$ that contain $\theta^\circ$.}

\begin{prop}\label{prop:qklgpieri.undeformed}%
  For any shape $\mu \subset \wh\cP_X$ and $1 \leq p \leq n$, we have
  \[
    \cO^p \odot \cO^\mu \,=\, \sum_\nu \cH(\nu/\mu, p)\, \cO^\nu
  \]
  in $\QK(X)_q$, where the sum is over all shapes $\nu \subset \wh\cP_X$
  containing $\mu$.
\end{prop}
\begin{proof}
  Given a shape $\nu \subset \cP_X$ we let $\cI_\nu \in K(X)$ denote the dual
  element of $\cO^\nu$, defined by $\euler{X}(\cI_\nu \cdot \cO^\la) =
  \delta_{\nu,\la}$ for all shapes $\la \subset \cP_X$. We have
  \cite[Lemma~3.5]{buch.ravikumar:pieri}
  \[
    \cI_\nu = \sum_{\nu/\ka\text{ rook strip}}
    (-1)^{|\nu/\ka|}\, \cO_\ka \,,
  \]
  where the sum is over all shapes $\ka \subset \nu$ such that $\nu/\ka$ is a
  rook strip, that is, $\nu/\ka$ has at most one box in each row and column.
  Assume that $\mu \subset \cP_X$ is a classical shape. By \Corollary{pierigw}
  and equation \eqn{Hdef} we have
  \[
    \begin{split}
      I_d(\cO^p, \cO^\mu, \cI_\nu)
      &=
      \sum_{\nu/\ka\text{ rook strip}}
      (-1)^{|\nu/\ka|}\, I_d(\cO^p, \cO^\mu, \cO_\ka) \\
      &=
      \sum_{\nu/\ka\text{ rook strip}}
      (-1)^{|\nu/\ka|}\, h\big(N(\ka[d]/\mu), R(\ka[d]/\mu)-p\big) \\
      &=\,
      \cH(\nu[d]/\mu, p) \,,
    \end{split}
  \]
  where the sums are over all shapes $\ka \subset \cP_X$ such that $\mu \subset
  \ka[d] \subset \nu[d]$ and $\nu/\ka$ is a rook strip. By the definition of the
  undeformed product \cite[\S2.5]{buch.chaput.ea:chevalley}, we obtain
  \[
    \cO^p \odot \cO^\mu
    = \sum_{\nu,d} I_d(\cO^p,\cO^\mu,\cI_\nu)\, q^d\, \cO^\nu
    = \sum_{\nu,d} \cH(\nu[d]/\mu,p)\, \cO^{\nu[d]} \,,
  \]
  with the sum over $\nu \subset \cP_X$ and $d \geq 0$ such that $\mu \subset
  \nu[d]$. The proposition is equivalent to this identity.
\end{proof}

\targetsec{theta-minus}{%
We next consider the associative quantum product $\cO^p \star \cO^\mu$. Given a
skew shape $\theta \subset \wh\cP_X$, let $\theta^- \subset \theta$ be the skew
shape obtained by removing the maximal box on the north-east diagonal, if any,
as well as any boxes in the same row that do not have a box immediately below
them in $\theta$.}

\begin{center}
  \begin{tikzpicture}[x=.4cm,y=.4cm]
    \def\yy{-- ++(0,-1) -- ++(1,0)}
    \def\zz{-- ++(0,1) -- ++(-1,0)}
    \draw (11,12) \yy\yy\yy\yy\yy -- ++(0,-1);
    \draw (9,2) \zz\zz\zz\zz\zz -- ++(0,1);
    \draw [very thick] (12,10) -- ++(-2,0) -- ++(0,-1) -- ++(-2,0) -- ++(0,-1) -- ++(-1,0) -- ++(0,-2) -- ++(-1,0);
    \draw [very thick] (14,8) -- ++(-3,0) -- ++(0,-1) -- ++(-1,0) -- ++(0,-1)
    -- ++(-2,0) -- ++(0,-2);
    \draw (11,8) -- ++(1,0) -- ++(0,1) -- ++(-1,0) -- cycle -- ++(1,1) ++(-1,0) -- ++(1,-1);
    \draw (12,8) -- ++(1,0) -- ++(0,1) -- ++(-1,0) -- cycle -- ++(1,1) ++(-1,0) -- ++(1,-1);
    \draw (13,8) -- ++(1,0) -- ++(0,1) -- ++(-1,0) -- cycle -- ++(1,1) ++(-1,0) -- ++(1,-1);
  \end{tikzpicture}
\end{center}

\targetsec{cN-hat}{%
For $p \in \Z$ we then define
\begin{equation}\label{eqn:Nhat}
  \wh\cN(\theta,p) \,=\, \cH(\theta,p) -
  \sum_{\theta^- \subset \varphi \subsetneq \theta} \cH(\varphi, p) \,,
\end{equation}
the sum over all proper lower order ideals $\varphi$ in $\theta$ that contain
$\theta^-$.}

We will prove in \Corollary{lgqkpieri.proof} that $\wh\cN(\theta,p) =
\cN(\theta,p)$ holds for all skew shapes $\theta \subset \wh\cP_X$ and $p \in
\Z$, that is, $\wh\cN(\theta,p)$ is equal to $(-1)^{|\theta|-p}$ times the
number of QKLG-tableaux of shape $\theta$ with content $\{1,2,\dots,p\}$.
\Theorem{lg:qkpieri} is therefore equivalent to the following statement.

\begin{prop}\label{prop:qklgpieri.geom}%
  For any shape $\mu \subset \wh\cP_X$ and $1 \leq p \leq n$, we have
  \[
    \cO^p \star \cO^\mu \,=\, \sum_\nu \wh\cN(\nu/\mu, p)\, \cO^\nu
  \]
  in $\QK(X)_q$, where the sum is over all shapes $\nu \subset \wh\cP_X$
  containing $\mu$.
\end{prop}
\begin{proof}
  For any shape $\la \subset \wh\cP_X$, set $\la^+ = \la \cup I(q^{d+1})$, where
  $d \in \Z$ is maximal with $I(q^d) \subset \la$. In other words, $\la^+
  \subset \wh\cP_X$ is the smallest shape that contains $\la$ and contains one
  more box than $\la$ on the north-east diagonal of $\wh\cP_X$. We then have
  $q\, \psi(\cO^\la) = \cO^{\la^+}$, where $\psi$ is the line neighborhood
  operator from \Section{qktheory}. It therefore follows from
  \Proposition{qklgpieri.undeformed} that the coefficient of $\cO^\nu$ in the
  product
  \begin{equation}\label{eqn:Nhatcoef}
    \cO^p \star \cO^\mu \,=\,
    \cO^p \odot \cO^\mu - q\, \psi(\cO^p \odot \cO^\mu)
  \end{equation}
  is equal to
  \[
    \cH(\nu/\mu,p) -
    \sum_{\la:\, \mu \subset \la \text{ and } \la^+=\nu} \cH(\la/\mu,p)
    \ = \ \wh\cN(\nu/\mu,p) \,,
  \]
  as required.
\end{proof}

\begin{remark}
  The constants $\wh\cN(\theta,p)$ have alternating signs by
  \Corollary{lgqkpieri.proof}, but the constants $\cH(\theta,p)$ do not have
  easily predictable signs.
\end{remark}


\section{Combinatorial Identities}\label{sec:combin}%

In this section we complete the proof of \Theorem{lg:qkpieri}. Let $X =
\LG(n,2n)$ be a Lagrangian Grassmannian. Any shape $\la \subset \cP_X$ and
integer $1 \leq p \leq n$ define three products
\[
  \begin{split}
    \cO^p \cdot \cO^\la \, &= \, \sum_\nu \cC(\nu/\la,p)\, \cO^\nu
    \ \in K(X) \,, \\
    \cO^p \odot \cO^\la \, &= \, \sum_\nu \cH(\nu/\la,p)\, \cO^\nu
    \ \in \QK(X) \,, \text{ \ and} \\
    \cO^p \star \cO^\la \, &= \, \sum_\nu \wh\cN(\nu/\la,p)\, \cO^\nu
    \ \in \QK(X) \,.
  \end{split}
\]
The first sum is over all shapes $\nu \subset \cP_X$ containing $\la$, and the
two last sums are over all quantum shapes $\nu \subset \wh\cP_X$ containing
$\la$. The constants $\cH(\theta,p)$ and $\wh\cN(\theta,p)$ are defined whenever
$\theta$ is a skew shape in $\wh\cP_X$, and these constants depend on where
$\theta$ is located in $\wh\cP_X$, including whether $\theta$ meets the two
diagonals in $\wh\cP_X$. The constants $\cH(\theta,p)$ and $\wh\cN(\theta,p)$
are therefore bound to our chosen Lagrangian Grassmannian $X = \LG(n,2n)$. On
the other hand, the constant $\cC(\theta,p)$ does not depend on any NE diagonal,
and its definition extends naturally to any (finite) skew shape $\theta$ in the
partially ordered set $\cP_X^\infty = \bigcup_m \wh\cP_{\LG(m,2m)}$, which is
unbounded in north-east direction. This is equivalent to considering
$\cC(\theta,p)$ as a structure constant of $\varprojlim K(\LG(m,2m))$. Notice
that $\cC(\theta,p) = \cH(\theta,p) = \wh\cN(\theta,p)$ holds whenever $\theta
\subset \wh\cP_X$ is disjoint from the NE diagonal.

\Theorem{lg:qkpieri} states that each quantum structure constant
$\wh\cN(\theta,p)$ is equal to the (signed) number $\cN(\theta,p)$ of
QKLG-tableaux. We prove this by showing that $\wh\cN(\theta,p)$ and
$\cN(\theta,p)$ are determined by the same recursive identities. These
identities simultaneously provide an alternative definition of these constants.
We also prove an analogous recursive definition of the undeformed structure
constants $\cH(\theta,p)$ when $\theta$ contains at most one box on the NE
diagonal of $\wh\cP_X$. Our recursive definitions refer to (quantum or
undeformed) structure constants computed in the quantum $K$-theory of smaller
Lagrangian Grassmannians $X' = \LG(n',2n')$. For this reason we will introduce
additional notation to make it easier to refer to the constants $\cH(\theta,p)$
and $\wh\cN(\theta,p)$ when $\theta$ is regarded as a skew shape in
$\wh\cP_{X'}$. We summarize this notation here and give precise definitions
below. We will regard any skew shape $\theta$ as a subset of $\cP_X^\infty$.
Suppose $\theta$ is contained in a specific set $\wh\cP_{X'}$, and we wish to
refer to the constants $\cH(\theta,p)$ and $\wh\cN(\theta,p)$ computed in
$\QK(X')$. If $\theta$ is disjoint from the NE diagonal of $\wh\cP_{X'}$, then
we can use the structure constant $\cC(\theta,p)$ of the ordinary $K$-theory
ring $K(X)$. On the other hand, if $\theta$ meets the NE diagonal of
$\wh\cP_{X'}$, then the values of $\cH(\theta,p)$ and $\wh\cN(\theta,p)$
computed in $\QK(X')$ will be denoted $\cH_q(\theta,p)$ and $\cN_q(\theta,p)$.
Equivalently, given any skew shape $\theta \subset \cP_X^\infty$, we can define
$\cH_q(\theta,p)$ and $\cN_q(\theta,p)$ as the values of $\cH(\theta,p)$ and
$\wh\cN(\theta,p)$ computed in $\QK(X')$, where $X' = \LG(n',2n')$ is the
smallest Lagrangian Grassmannian for which $\theta \subset \wh\cP_{X'}$.

\targetsec{cPXinfty}{%
Define $\cP_X^\infty = \{ (i,j) \in \Z^2 \mid i \leq j \}$, and equip this set
with the partial order defined by $(i',j') \leq (i'',j'')$ if and only if $i'
\leq i''$ and $j' \leq j''$. We will consider $\cP_X$ and $\wh\cP_X$ as subsets
of $\cP_X^\infty$. Define a \emph{skew shape} in $\cP_X^\infty$ to be any finite
subset obtained as the difference between two lower order ideals. Given a skew
shape $\theta \subset \cP_X^\infty$, let $R(\theta)$ denote the size of a
maximal rim contained in $\theta$, and let $N'(\theta)$ be the number of
components of $\theta$ that are disjoint from the SW diagonal. Let $\theta'$
denote the skew shape obtained by removing all south-east corners from $\theta$.
Given an integer $p \in \Z$, it was proved in \cite{buch.ravikumar:pieri} that
the constant $\cC(\theta,p)$ from \Definition{klgtab} is given by
\[
  \cC(\theta,p) = \sum_{\theta' \subset \varphi \subset \theta}
  (-1)^{|\theta|-|\varphi|}\, h(N'(\varphi), R(\varphi)-p) \,,
\]
where the function $h : \N \times \Z \to \Z$ is defined by \eqn{hfcn}.}

Let $\theta \subset \cP_X^\infty$ be a non-empty skew shape. Then $\theta$
contains a unique \emph{north-east box} $Q$. A skew shape in $\cP_X^\infty$ will
be called a \emph{line} if its boxes are contained in a single row or a single
column. The \emph{north-east arm} of $\theta$ is the largest line $\psi$ that
can be obtained by intersecting $\theta$ with a square whose upper-right box is
$Q$.

\begin{center}
  \begin{tikzpicture}[x=.3cm,y=.3cm]
    \begin{scope}
      \draw (0,0) -- ++(-3,0) -- ++(0,-2) -- ++(-1,0) -- ++(0,-1) -- ++(-1,0)
        -- ++(0,-1) -- ++(2,0) -- ++(0,1) -- ++(1,0) -- ++(0,2) -- ++(2,0)
        -- cycle;
      \draw [very thick] (0,0) -- ++(-2,0) -- ++(0,-1) -- ++ (2,0) -- cycle;
    \end{scope}
    \begin{scope}[shift={(9,0)}]
      \draw (0,0) -- ++(-1,0) -- ++(0,-2) -- ++(-2,0) -- ++(0,-1) -- ++(-2,0)
        -- ++(0,-1) -- ++(3,0) -- ++(0,1) -- ++(2,0) -- cycle;
      \draw [very thick] (0,0) -- ++(-1,0) -- ++(0,-2) -- ++(1,0) -- cycle;
    \end{scope}
    \begin{scope}[shift={(18,0)}]
      \draw (-1,-1) -- ++(-1,0) -- ++(0,-1) -- ++(-1,0) -- ++(0,-1) -- ++(-2,0)
        -- ++(0,-1) -- ++(3,0) -- ++(0,1) -- ++(1,0) -- cycle;
      \draw [very thick] (0,0) -- ++(-1,0) -- ++(0,-1) -- ++(1,0) -- cycle;
    \end{scope}
    \begin{scope}[shift={(27,0)}]
      \draw (0,0) -- ++(-2,0) -- ++(0,-2) -- ++(-2,0) -- ++(0,-1) -- ++(-1,0)
        -- ++(0,-1) -- ++(2,0) -- ++(0,1) -- ++(3,0) -- cycle;
      \draw [very thick] (0,0) -- ++(-1,0) -- ++(0,-1) -- ++ (1,0) -- cycle;
    \end{scope}
  \end{tikzpicture}
\end{center}

\targetsec{arm}{%
We will say that the north-east arm $\psi$ is a \emph{row} if $\theta$ contains
no box immediately below $Q$, and $\psi$ is a \emph{column} if $\theta$ contains
no box immediately to the left of $Q$. Notice that $\psi$ can be both a row and
a column (if it is a disconnected single box), and it can be neither a row nor a
column (only if $\theta$ is not a rim). We let $\wh\theta = \theta \ssm \psi$
denote the complement of the north-east arm. This set $\wh\theta$ is a skew
shape if and only if $\psi$ is a row or a column. If $\psi$ is not connected to
$\wh\theta$, then $\psi$ is not a row if and only if $\psi$ is a column with at
least two boxes, and $\psi$ is not a column if and only if $\psi$ is a row with
at least two boxes. We set $\chi(\text{true}) = 1$ and $\chi(\text{false}) = 0$.}

\begin{prop}[\cite{buch.ravikumar:pieri}]\label{prop:lgkpieri}%
  Let $\theta \subset \cP_X^\infty$ be any skew shape and let $p \in \Z$. If $\theta$
  is not a rim, then $\cC(\theta,p)=0$, and $\cC(\emptyset,p) = \chi(p \leq 0)$.
  If $\theta$ is a non-empty rim with north-east arm $\psi = \theta \ssm
  \wh\theta$ of size $a$, then $\cC(\theta,p)$ is determined by the following
  rules.\smallskip

  \noin{\rm(i)} If $\wh\theta = \emptyset$ and $\theta$ meets the SW diagonal,
  then $\cC(\theta,p) = \delta_{p,|\theta|}$ if $\theta$ is a row, and
  $\cC(\theta,p) = \delta_{p,|\theta|} - \delta_{p,|\theta|-1}$ if $\theta$ is
  not a row.\smallskip

  \noin{\rm(ii)} If $\wh\theta = \emptyset$ and $\theta$ is disjoint from the SW
  diagonal, then $\cC(\theta,p) = 2\,\delta_{p,|\theta|} - \chi(p \geq
  1)\,\delta_{p,|\theta|-1}$.\smallskip

  \noin{\rm(iii)} If $\wh\theta \neq \emptyset$ and $\psi$ is connected to
  $\wh\theta$, then $\cC(\theta,p) = \cC(\wh\theta,p-a) -
  \cC(\wh\theta,p-a+1)$.\smallskip

  \noin{\rm(iv)} If $\wh\theta \neq \emptyset$ and $\psi$ is not connected to
  $\wh\theta$, then $\cC(\theta,p) = 2\,\cC(\wh\theta,p-a) -
  2\,\cC(\wh\theta,p-a+1)$ if $a=1$, and $\cC(\theta,p) = 2\,\cC(\wh\theta,p-a)
  - 3\,\cC(\wh\theta,p-a+1) + \cC(\wh\theta,p-a+2)$ if $a \geq 2$.
\end{prop}

\targetsec{cHq-func}{%
Given a non-empty skew shape $\theta \subset \cP_X^\infty$ with north-east box
$Q$, let $N'_q(\theta) = \max(N'(\theta)-1,0)$ be the number of components of
$\theta$ that do not meet the SW diagonal and do not contain $Q$, and let
$\theta'_q = \theta' \cup Q$ be the result of removing all south-east corners
except $Q$ (in case $Q$ is a south-east corner). For $p \in \Z$ we define
\begin{equation}\label{eqn:Hq}%
  \cH_q(\theta,p) = \sum_{\theta'_q \subset \varphi \subset \theta}
  (-1)^{|\theta|-|\varphi|}\, h(N'_q(\varphi), R(\varphi)-p) \,.
\end{equation}
}

\begin{remark}\label{remark:HCHq}%
  Assume that $\theta \subset \wh\cP_X$ is a skew shape containing at most one
  box from the NE diagonal of $\wh\cP_X$, for example a rim. Then the constant
  $\cH(\theta,p)$ defined by equation \eqn{Hdef} is given by
  \[
    \cH(\theta,p) = \begin{cases}
      \cC(\theta,p) & \text{if $\theta$ is disjoint from the NE diagonal,} \\
      \cH_q(\theta,p) & \text{if $\theta$ contains one box on the NE diagonal.}
    \end{cases}
  \]
  If $\theta \subset \wh\cP_X$ contains two or more boxes from the NE diagonal,
  then $\theta$ is not a skew shape in $\cP_X^\infty$ and $\cH_q(\theta,p)$ is
  not defined. Our next result together with \Proposition{lgkpieri} provides a
  recursive definition of the constants $\cH_q(\theta,p)$.
\end{remark}

\begin{prop}\label{prop:lgopieri}%
  Let $\theta \subset \cP_X^\infty$ be any non-empty skew shape and let $p \in \Z$.
  If $\theta$ is not a rim, then $\cH_q(\theta,p) = 0$. If $\theta$ is a rim
  with north-east arm $\psi = \theta \ssm \wh\theta$ of size $a$, then
  $\cH_q(\theta,p)$ is determined by the following rules.\smallskip

  \noin{\rm(i$''$)} If $\wh\theta = \emptyset$, then $\cH_q(\theta,p) = \chi(p
  \leq |\theta|)$ if $\theta$ is a row, and $\cH_q(\theta,p) =
  \delta_{p,|\theta|}$ if $\theta$ is not a row.\smallskip

  \noin{\rm(iii$''$)} If $\wh\theta \neq \emptyset$ and $\psi$ is connected to
  $\wh\theta$, then $\cH_q(\theta,p) = \cH_q(\wh\theta,p-a)$ if $\psi$ is a row
  or $p \geq |\theta|$, and $\cH_q(\theta,p) = \cC(\wh\theta,p-a) -
  \cH_q(\wh\theta, p-a) + \cH_q(\wh\theta,p-a+1)$ if $\psi$ is a column and $p <
  |\theta|$.\smallskip

  \noin{\rm(iv$''$)} If $\wh\theta \neq \emptyset$ and $\psi$ is not connected
  to $\wh\theta$, then $\cH_q(\theta,p) = \cC(\wh\theta,p-a)$ if $\psi$ is a
  row, and $\cH_q(\theta,p) = \cC(\wh\theta,p-a) - \cC(\wh\theta,p-a+1)$ if
  $\psi$ is not a row.
\end{prop}
\begin{proof}
  If $\theta$ is not a rim, then let $B \in \theta$ be a south-east corner such
  that $\theta$ contains a box strictly north and strictly west of $B$. For any
  skew shape $\varphi$ with $\theta'_q \subset \varphi \subset \theta \ssm B$ we
  have $h(N'_q(\varphi), R(\varphi)-p) = h(N'_q(\varphi \cup B), R(\varphi
  \cup B)-p)$, which implies that $\cH_q(\theta, p) = 0$. We can therefore
  assume that $\theta$ is a non-empty rim. If $\theta = \psi$ is a row, then
  $\cH_q(\theta,p) = h(0,|\theta|-p) = \chi(p \leq |\theta|)$. If $\theta =
  \psi$ is not a row, and $B$ is the bottom box of $\theta$, then
  $\cH_q(\theta,p) = h(0, |\theta|-p) - h(0, |\theta\ssm B|-p) =
  \delta_{p,|\theta|}$.

  Assume that $\wh\theta \neq \emptyset$ and $\psi$ is a row connected to
  $\wh\theta$. Then the skew shapes occurring in \eqn{Hq} have the form $\varphi
  \cup \psi$, where $\wh\theta' = (\wh\theta)' \subset \varphi \subset
  \wh\theta$. Since $h(N'_q(\varphi \cup \psi), |\varphi \cup \psi|-p) =
  h(N'_q(\varphi), |\varphi|-p+a)$, we obtain $\cH_q(\theta,p) =
  \cH_q(\wh\theta, p-a)$.

  Assume that $\wh\theta \neq \emptyset$ and $\psi$ is a column connected to
  $\wh\theta$. If $p \geq |\theta|$, then since $h(N'_q(\varphi), |\varphi|-p)$
  is non-zero only when $p \leq |\varphi|$, we obtain
  \[
    \cH_q(\theta, p) = h(N'_q(\theta), |\theta|-p)
    = h(N'_q(\wh\theta), |\wh\theta|-p+a) = \cH_q(\wh\theta, p-a) \,.
  \]
  Assume that $p < |\theta|$ and let $B$ be the north-east box of $\wh\theta$.
  Then
  \[
    \cH_q(\theta,p) - \cC(\wh\theta,p-a)
    + \cH_q(\wh\theta,p-a) - \cH_q(\wh\theta,p-a+1)
  \]
  is equal to the sum over all skew shapes $\varphi$, with
  $\wh\theta' \subset \varphi \subset \wh\theta \ssm B$, of
  $(-1)^{|\wh\theta|-|\varphi|}$ times
  \begin{equation}\label{eqn:Hqrec}%
    \begin{split}
    & h(N'_q(\varphi \cup \psi), |\varphi\cup\psi|-p)
    - h(N'_q(\varphi\cup B\cup\psi), |\varphi\cup B\cup\psi|-p) \\
    & - h(N'(\varphi), |\varphi|-p+a)
    + h(N'(\varphi\cup B), |\varphi\cup B|-p+a) \\
    & - h(N'_q(\varphi\cup B), |\varphi\cup B|-p+a)
    + h(N'_q(\varphi\cup B), |\varphi\cup B|-p+a-1) \,.
    \end{split}
  \end{equation}
  Using that
  \[
    N'_q(\varphi \cup \psi) = N'(\varphi \cup B) = N'(\varphi)
    \text{ \ \ and \ \ }
    N'_q(\varphi \cup B \cup \psi) = N'_q(\varphi \cup B) = N'_q(\varphi) \,,
  \]
  it follows that \eqn{Hqrec} is equal to
  \[
    h(N'(\varphi), |\varphi|-p+a+1)
    + h(N'_q(\varphi), |\varphi|-p+a)
    - 2\,h(N'_q(\varphi), |\varphi|-p+a+1) \,.
  \]
  If $N'(\varphi) > 0$, then this expression is zero by identity \eqn{hbinom}.
  Otherwise we have $N'(\varphi) = N'_q(\varphi) = 0$, which implies $\varphi =
  \wh\theta\ssm B$, so the expression is zero because $|\varphi|-p+a \geq 0$.

  We finally assume that $\wh\theta \neq \emptyset$ and $\psi$ is not connected
  to $\wh\theta$. If $\psi$ is a row, then
  \[
    \begin{split}
    \cH_q(\theta,p) \
    &= \
    \sum_{\wh\theta' \subset \varphi \subset \wh\theta}
    (-1)^{|\theta|-|\varphi\cup\psi|}\,
    h(N'_q(\varphi\cup\psi), |\varphi\cup\psi|-p) \\
    &= \
    \sum_{\wh\theta' \subset \varphi \subset \wh\theta}
    (-1)^{|\wh\theta|-|\varphi|}\,
    h(N'(\varphi), |\varphi|-p+a)
    \ = \
    \cC(\wh\theta, p-a) \,.
    \end{split}
  \]
  If $\psi$ is not a row, $B$ is the bottom box of $\psi$, and $\psi' = \psi \ssm B$, then
  \[
    \begin{split}
    & \cH_q(\theta,p) \\
    & \ =
    \sum_{\wh\theta' \subset \varphi \subset \wh\theta}
    (-1)^{|\theta|-|\varphi \cup \psi|} \left(
      h(N'_q(\varphi \cup \psi), |\varphi \cup \psi|-p)
      - h(N'_q(\varphi \cup \psi'), |\varphi \cup \psi'|-p)
      \right) \\
    & \ =
    \sum_{\wh\theta' \subset \varphi \subset \wh\theta}
    (-1)^{|\wh\theta|-|\varphi|} \left(
    h(N'(\varphi), |\varphi|-p+a)
    - h(N'(\varphi), |\varphi|-p+a-1)
    \right) \\
    & \ = \
    \cC(\wh\theta, p-a) - \cC(\wh\theta, p-a+1) \,.
  \end{split}
  \]
  This completes the proof.
\end{proof}

\begin{example}
  For any skew shape $\theta = \tableau{6}{&{}\\&{}\\{}&{}} \subset
  \cP_X^\infty$ and $p \leq 2$, we obtain $\wh\theta = \tableau{6}{{}&{}}$ and
  \[
    \cH_q(\theta,p) = \cC(\wh\theta,p-2) - \cH_q(\wh\theta,p-2) +
    \cH_q(\wh\theta,p-1) = 0 - 1 + 1 = 0 \,.
  \]
  This illustrates that negative values of $p$ must be allowed in
  \Proposition{lgopieri} to obtain correct recursive identities without
  including additional special cases.
\end{example}

\begin{defn}\label{defn:lgqkpieri}%
  Given a non-empty skew shape $\theta \subset \cP_X^\infty$ and $p \in \Z$, define
  an integer $\cN_q(\theta,p)$ as follows. If $\theta$ is not a rim, then
  $\cN_q(\theta,p) = 0$. If $\theta$ is a rim with north-east arm $\psi = \theta
  \ssm \wh\theta$ of size $a$, then $\cN_q(\theta,p)$ is determined by the
  following rules.\smallskip

  \noin{\rm(i$'$)} If $\wh\theta = \emptyset$ and $\theta$ meets the SW
  diagonal, then $\cN_q(\theta,p) = \delta_{p,|\theta|}$.\smallskip

  \noin{\rm(ii$'$)} If $\wh\theta = \emptyset$ and $\theta$ is disjoint from the
  SW diagonal, then $\cN_q(\theta,p) = \delta_{p,|\theta|}$ if $\theta$ is a
  column, and $\cN_q(\theta,p) = \delta_{p,|\theta|} - \delta_{p,|\theta|-1}$ if
  $\theta$ is not a column.\smallskip

  \noin{\rm(iii$'$)} If $\wh\theta \neq \emptyset$ and $\psi$ is connected to
  $\wh\theta$, then $\cN_q(\theta,p) = \cN_q(\wh\theta,p-a)$ if $\psi$ is a
  column, and $\cN_q(\theta,p) = \cN_q(\wh\theta,p-a) - \cC(\wh\theta,p-a+1)$ if
  $\psi$ is a row.\smallskip

  \noin{\rm(iv$'$)} If $\wh\theta \neq \emptyset$ and $\psi$ is not connected to
  $\wh\theta$, then $\cN_q(\theta,p) = \cC(\wh\theta, p-a) - \cC(\wh\theta,
  p-a+1)$ if $\psi$ is a column, and $\cN_q(\theta,p) = \cC(\wh\theta, p-a) -
  2\, \cC(\wh\theta, p-a+1) + \cC(\wh\theta, p-a+2)$ if $\psi$ is not a column.
\end{defn}

Recall from \Definition{qklgtab} that $|\cN(\theta,p)|$ is the number of
QKLG-tableaux of shape $\theta \subset \wh\cP_X$ with content $\{1,2,\dots,p\}$.

\begin{lemma}\label{lemma:qklg-count}%
  Let $\theta \subset \wh\cP_X$ be a rim meeting the NE diagonal of $\wh\cP_X$
  and let $p \in \Z$. Then $\cN_q(\theta,p) = \cN(\theta,p)$.
\end{lemma}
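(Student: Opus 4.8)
The plan is to prove that $\cN(\theta,p)$ satisfies exactly the recursive rules (i$'$)--(iv$'$) of \Definition{lgqkpieri}, so that $\cN(\theta,p)=\cN_q(\theta,p)$ follows by induction on $|\theta|$. First I would set up the induction: the base case is $\wh\theta=\emptyset$, where $\theta=\psi$ is a single line. If $\theta$ meets the SW diagonal, every box is either a SW-diagonal box or terminal; condition (v) forces the SW-diagonal box to be unprimed and condition (vii) forces the terminal quantum boxes to be primed, and one checks that the unique consistent filling with content $\{1,\dots,p\}$ exists iff $p=|\theta|$ (note the whole line is a quantum component because it meets the NE diagonal); this gives (i$'$). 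If $\theta$ is disjoint from the SW diagonal: when $\psi$ is a column, all boxes are forced by the column-strictness and the primed/unprimed rules to give $\delta_{p,|\theta|}$; when $\psi$ is a row of length $a\ge 2$, the terminal (rightmost) box must be primed by (vii), the leftmost NE-diagonal box governs the quantum status, and a short count of the two admissible content sets yields $\delta_{p,|\theta|}-\delta_{p,|\theta|-1}$; this is (ii$'$).

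For the inductive step I would remove the north-east arm $\psi$ of size $a$ and analyze which labels can occupy $\psi$ given a fixed QKLG-tableau on $\wh\theta$. The key combinatorial observation is that $\psi$, being anchored at the NE box $Q$, always lies on or touches the NE diagonal, so every box of $\psi$ is a quantum box; hence conditions (vi) and (vii) bite hardest here. When $\psi$ is a \emph{column} connected to $\wh\theta$ (case iii$'$): the labels in $\psi$ are forced to be the $a$ smallest available primed values increasing downward, contributing a clean shift $p\mapsto p-a$, and since a connected column has its bottom box non-terminal, no extra (vii)-correction arises, giving $\cN(\wh\theta,p-a)$. When $\psi$ is a \emph{row} connected to $\wh\theta$ (case iii$'$): the rightmost box of $\psi$ is terminal and quantum, so by (vii) it must be primed; this replaces one unprimed choice by a primed one and accounts for the extra $-\cC(\wh\theta,p-a+1)$ term (here $\cC=\cN$ on the arm-free part because $\wh\theta$, having lost $Q$, may or may not meet the NE diagonal, and in the connected-row situation the relevant sub-count is the classical $\cC$; I would verify this compatibility using \Remark{HCHq}-type reasoning and the fact that connectivity pins down whether $\wh\theta$ still meets the diagonal). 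The disconnected cases (iv$'$) are handled the same way but with the arm contributing an independent factor: a disconnected quantum line of size $a$ admits, by (vi)--(vii), either one or two admissible content sets depending on whether it is a column or not (a disconnected column: terminal bottom box primed, giving $\cC(\wh\theta,p-a)-\cC(\wh\theta,p-a+1)$; a disconnected non-column row: both the leftmost and an internal box impose constraints, yielding the three-term alternating sum), matching the stated formulas.

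The main obstacle will be bookkeeping the interaction between \emph{connectivity}, the \emph{quantum-box} propagation rule, and the \emph{terminal} condition simultaneously — in particular verifying in each case that the sub-tableau count on $\wh\theta$ is correctly either $\cN(\wh\theta,\cdot)$ or $\cC(\wh\theta,\cdot)$. The subtle point is that removing $\psi$ can disconnect $Q$'s component from the NE diagonal, changing which boxes of $\wh\theta$ count as quantum; I would need a small auxiliary lemma stating that if $\psi$ is connected to $\wh\theta$ then $\wh\theta$ inherits the NE-diagonal-meeting property exactly when $\psi$ is a column (so that recursion stays within $\cN_q$), whereas if $\psi$ is a row or disconnected the relevant count collapses to the classical $\cC$. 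Once that dichotomy is pinned down, each of (i$'$)--(iv$'$) reduces to the elementary enumeration of admissible labelings of a single line subject to conditions (i)--(vii), which is routine. I would also remark that allowing $p\in\Z$ (including $p\le 0$) is essential for the recursion to close without extra cases, exactly as illustrated in the example following \Proposition{lgopieri}.
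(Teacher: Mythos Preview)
Your overall strategy—verify that the tableau count $\cN(\theta,p)$ obeys the recursion (i$'$)--(iv$'$) of \Definition{lgqkpieri} by analyzing the forced labels on the north-east arm $\psi$—is precisely the paper's approach, and your sketch of the base cases and of the disconnected cases (iv$'$) is essentially on target. The gap is in the connected cases (iii$'$).

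First, you have the orientation reversed. In a row arm $\psi=(B_1,\dots,B_a=Q)$ read left to right, the NE-diagonal box is $Q$ on the \emph{right}; it is never terminal when $a\ge 2$ since $B_{a-1}$ sits to its left, and in the connected-row case $B_1$ has a box of $\wh\theta$ directly below it, so \emph{no} box of $\psi$ is terminal. Condition (vii) therefore does not act on $\psi$ here and cannot account for the $-\cC(\wh\theta,p-a+1)$ term. The same reversal affects your (ii$'$): the terminal box of a bare row is the \emph{leftmost} one, not the rightmost.

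Second, your proposed auxiliary lemma is false as stated. Once $\psi$ is removed, $\wh\theta$ \emph{never} meets the NE diagonal, since the unique diagonal box $Q$ lies in $\psi$. What decides whether the recursion lands in $\cN_q(\wh\theta,\cdot)$ or in $\cC(\wh\theta,\cdot)$ is not a geometric inheritance but the \emph{propagation rule} for quantum boxes. The forced labels on $\psi$ (a column gets $1',\dots,a'$; a row gets $1'$ followed by the largest $a-1$ unprimed values) make every box of $\psi$ quantum down to the box adjacent to $\wh\theta$, provided the intervening labels are unrepeated. Condition (vi) then forces that adjacent primed box to be unrepeated, which in turn makes the NE box of $\wh\theta$ quantum—so the sub-filling on $\wh\theta$ must itself satisfy (vi)--(vii) with \emph{its} NE box playing the role of the diagonal box, and the inductive hypothesis identifies this count with $\cN_q(\wh\theta,\cdot)$. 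When instead a repeated label breaks the chain (which in the connected-row case happens exactly when the label $p-a+1$ recurs in $\wh\theta$), propagation halts, $\wh\theta$ carries no quantum boxes, and one counts ordinary KLG-tableaux via $\cC(\wh\theta,\cdot)$. In the connected-row case both alternatives genuinely occur, producing the two-term formula $\cN_q(\wh\theta,p-a)-\cC(\wh\theta,p-a+1)$; your proposed dichotomy (column $\Rightarrow\cN_q$, row or disconnected $\Rightarrow\cC$) would miss the $\cN_q$ term here.
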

\begin{proof}
  Let $\psi = \theta \ssm \wh\theta$ be the north-east arm and set $a = |\psi|$.
  The pictures in this proof will be drawn for the case $a=4$. Assume first that
  $\wh\theta = \emptyset$. If $\theta$ meets the SW diagonal of $\wh\cP_X$, then
  there exists only one QKLG-tableau of shape $\theta$, which is one of the
  following cases:
  \[
    \tableau{12}{{1}&{2}&{3}&{}{a}}
    \text{ \ \ \ \ \ or \ \ \ \ \ }
    \tableau{12}{{1'}\\{2'}\\{3'}\\{a}}
  \]
  If $\theta$ is disjoint from the SW diagonal, then there is a unique QKLG-tableau of shape $\theta$ when $\theta$ is a column or a single box, and exactly two QKLG-tableaux of shape $\theta$ when $\theta$ is a row with at least two boxes:
  \[
    \tableau{12}{{1'}\\{2'}\\{3'}\\{a'}}
    \text{ \ \ \ \ \ or \ \ \ \ \ }
    \tableau{12}{{1'}&{2}&{3}&{a}}
    \text{ \ \ and \ \ }
    \tableau{12}{{1'}&{1}&{2}&{b}} \text{\ , \ \ where $b=a-1$.}
  \]
  This accounts for cases (i$'$) and (ii$'$) of \Definition{lgqkpieri}.

  Assume next that $\wh\theta \neq \emptyset$ and that $\psi$ is connected to
  $\wh\theta$. Then any QKLG-tableau of shape $\theta$ and content
  $\{1,\dots,p\}$ must assign the following labels to the boxes of $\psi$ (with
  $b_i = p-a+i$):
  \[
    \tableau{12}{&{1'}\\&{2'}\\&{3'}\\&{a'}\\{}&{p}}
    \text{ \ \ \ \ \ or \ \ \ \ \ }
    \tableau{12}{{1'}&{b_1}&{b_2}&{b_3}&{p}\\{}}
  \]
  The pictures also show two of the boxes from $\wh\theta$. If $\psi$ is a
  column, then $a'$ must be an unrepeated quantum box, so the labels of
  $\wh\theta$ can be any QKLG-tableau with content $\{a+1,a+2,\dots,p\}$ (with
  $p$ considered on the NE diagonal). If $\psi$ is a row, then the labels of
  $\wh\theta$ must have content either $\{1,2,\dots,p-a\}$ or
  $\{1,2,\dots,p-a+1\}$. In the first case $b_1$ is an unrepeated quantum box,
  so $1'$ is also a quantum box, and the labels of $\wh\theta$ can be any
  QKLG-tableau with content $\{1,2,\dots,p-a\}$ (with $1'$ considered on the NE
  diagonal). In the second case $b_1$ is repeated, $1'$ is not a quantum box, so
  the labels of $\wh\theta$ can be any KLG-tableau with content
  $\{1,2,\dots,p-a+1\}$. This accounts for case (iii$'$) of
  \Definition{lgqkpieri}.

  Finally, assume that $\wh\theta \neq \emptyset$ and $\psi$ is not connected to
  $\wh\theta$. Then any QKLG-tableau of shape $\theta$ and content
  $\{1,\dots,p\}$ must assign the following labels to the boxes of $\psi$ (with
  $b_i = p-a+i$):
  \[
    \tableau{12}{{1'}\\{2'}\\{3'}\\{a'}}
    \text{ \ \ \ \ \ or \ \ \ \ \ }
    \tableau{12}{{1'}&{b_2}&{b_3}&{p}}
  \]
  If $\psi$ is a column or a single box, then the labels of $\wh\theta$ must
  form a KLG-tableau with content $\{a+1,a+2,\dots,p\}$ or $\{a,a+1,\dots,p\}$.
  If $\psi$ is a row with at least two boxes, then the labels of $\wh\theta$
  must form a KLG-tableau with content $\{2,3,\dots,p-a+1\}$,
  $\{1,2,\dots,p-a+1\}$, $\{2,3,\dots,p-a+2\}$, or $\{1,3,\dots,p-a+2\}$. This
  accounts for case (iv$'$) of \Definition{lgqkpieri}.
\end{proof}

\begin{lemma}\label{lemma:rec:basic}%
  Let $\theta \subset \cP_X^\infty$ be a non-empty rim and let $p \in \Z$.\smallskip

  \noin{\rm(a)} \ For $p \leq 0$ we have $\cC(\theta,p) = \cN_q(\theta,p) = 0$,
  and
  \[
    \cH_q(\theta,p) = \begin{cases}
      1 & \text{if $\theta$ is a single row of boxes,} \\
      0 & \text{otherwise.}
    \end{cases}
  \]

  \noin{\rm(b)} \ We have $\cC(\theta,|\theta|) = 2^{N'(\theta)}$ and\,
  $\cN_q(\theta,|\theta|) = \cH_q(\theta,|\theta|) =
  2^{N'_q(\theta)}$.\smallskip

  \noin{\rm(c)} \ For $p > |\theta|$ we have $\cC(\theta,p) = \cN_q(\theta,p) =
  \cH_q(\theta,p) = 0$.
\end{lemma}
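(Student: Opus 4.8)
The plan is to verify all three statements by an induction on the north-east arm $\psi$, using the recursive characterizations already available: for $\cC$ we have \Proposition{lgkpieri}, for $\cH_q$ we have \Proposition{lgopieri}, and for $\cN_q$ we have \Definition{lgqkpieri}. All three constants vanish when $\theta$ is not a rim, so throughout we assume $\theta$ is a non-empty rim with north-east arm $\psi$ of size $a=|\psi|$ and complement $\wh\theta = \theta\ssm\psi$. The base case of the induction is $\wh\theta=\emptyset$, where all three constants are given explicitly by rules (i)--(ii), (i$''$), and (i$'$)--(ii$'$); in that case $\theta=\psi$ is a single line, $N'(\theta)=N'_q(\theta)=0$ unless $\theta$ is a disconnected box disjoint from the SW diagonal, and one reads off directly that $\cC(\theta,p)=\cN_q(\theta,p)=0$ for $p\le 0$, $\cH_q(\theta,p)=\chi(\theta\text{ is a row})$ for $p\le 0$, that $\cC(\theta,|\theta|)=2^{N'(\theta)}$ and $\cN_q(\theta,|\theta|)=\cH_q(\theta,|\theta|)=2^{N'_q(\theta)}$ (checking the row/column/box sub-cases), and that all three vanish for $p>|\theta|$. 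The main point is that in every one of the recursive rules (iii), (iv), (iii$''$), (iv$''$), (iii$'$), (iv$'$), the right-hand side involves only $\cC(\wh\theta,\cdot)$, $\cH_q(\wh\theta,\cdot)$, $\cN_q(\wh\theta,\cdot)$ with first argument $p-a,\,p-a+1,\,p-a+2$, so the induction hypothesis applies to $\wh\theta$ at those shifted values.

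For part (a), fix $p\le 0$ and $\wh\theta\neq\emptyset$. Then $p-a<p\le 0$, so by induction $\cC(\wh\theta,p-a)=\cC(\wh\theta,p-a+1)=\cC(\wh\theta,p-a+2)=0$ (note $p-a+2$ may be positive, but it is at most $\le 2$, so one still needs $p\le 0$ to keep all three $\le$ the length or $\le 0$; in fact if $p\le 0$ then $p-a+2\le 2-a\le 1$, so when $a\ge 2$ all three are $\le 0$ and vanish, and when $a=1$ only $p-a,p-a+1\le 0$ appear in the rules for $\cC$ and $\cN_q$, while rule (iv$'$) with $a=1$ never uses $p-a+2$ — I will check this arm-by-arm). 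Hence $\cC(\theta,p)=0$ by (iii)/(iv), and $\cN_q(\theta,p)=0$ by (iii$'$)/(iv$'$), since those rules are $\Z$-linear combinations of $\cC(\wh\theta,\cdot)$ and $\cN_q(\wh\theta,\cdot)$ at shifted arguments. For $\cH_q$, rules (iii$''$) and (iv$''$) express $\cH_q(\theta,p)$ in terms of $\cC(\wh\theta,\cdot)$ and $\cH_q(\wh\theta,\cdot)$; the $\cC$-terms vanish as above, and $\cH_q(\wh\theta,p-a)=\cH_q(\wh\theta,p-a+1)$ equals $1$ or $0$ according to whether $\wh\theta$ is a single row, so one chases through the four sub-cases of (iii$''$)/(iv$''$): if $\psi$ is a row and $\wh\theta$ is a single row then $\theta$ is a single row and the formula gives $\cH_q(\wh\theta,p-a)=1$ (via (iii$''$) first branch or (iv$''$) first branch); in all other combinations the value is $0$ or a cancelling difference. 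This is the one place a short case analysis is unavoidable, but it is entirely mechanical.

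For parts (b) and (c), take $p=|\theta|$ and $p>|\theta|$ respectively. Since $|\theta|=|\wh\theta|+a$, the shifted arguments $p-a,p-a+1,p-a+2$ become $|\wh\theta|,|\wh\theta|+1,|\wh\theta|+2$ in case (b) and all $>|\wh\theta|$ in case (c). For (c), the induction hypothesis gives that every $\cC(\wh\theta,\cdot)$, $\cN_q(\wh\theta,\cdot)$, $\cH_q(\wh\theta,\cdot)$ term on the right side of every recursion vanishes, hence $\cC(\theta,p)=\cN_q(\theta,p)=\cH_q(\theta,p)=0$; one must separately note that for $\cH_q$ via (iii$''$) the ``$p\ge|\theta|$'' branch $\cH_q(\wh\theta,p-a)$ is used, and $p-a>|\wh\theta|$ forces it to $0$ by induction as well. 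For (b), the only surviving term in $\cC(\wh\theta,\cdot)$ etc.\ is the one at first argument $|\wh\theta|$, namely $\cC(\wh\theta,|\wh\theta|)=2^{N'(\wh\theta)}$ (IH), the terms at $|\wh\theta|+1,|\wh\theta|+2$ vanishing by (c) applied to $\wh\theta$; so one substitutes into (iii)/(iv) and checks $N'(\theta)=N'(\wh\theta)$ when $\psi$ is connected (rule (iii): $\cC(\theta,|\theta|)=\cC(\wh\theta,|\wh\theta|)=2^{N'(\wh\theta)}=2^{N'(\theta)}$) and $N'(\theta)=N'(\wh\theta)+1$ when $\psi$ is disconnected (rule (iv) with $a=1$ gives $2\cdot 2^{N'(\wh\theta)}=2^{N'(\theta)}$; with $a\ge 2$, $\psi$ is a disconnected line of length $\ge 2$ so it is still one new component not meeting the SW diagonal, and the leading coefficient $2$ again doubles). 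The same bookkeeping with $N'_q$ in place of $N'$ and rules (iii$'$)/(iv$'$), (iii$''$)/(iv$''$) handles $\cN_q$ and $\cH_q$; here one uses $N'_q(\theta)=N'_q(\wh\theta)$ or $N'_q(\wh\theta)+1$ by the same component count, together with the definition $N'_q=\max(N'-1,0)$, taking care of the single edge case where $\wh\theta$'s only non-SW-diagonal component is the one containing the old NE box. I expect this component-counting for $N'_q$ — distinguishing whether the arm being removed carried the NE box and whether $\wh\theta$ still has a non-SW-diagonal component — to be the only genuinely delicate step; everything else is routine substitution into the recursions established in \Proposition{lgkpieri}, \Proposition{lgopieri}, and \Definition{lgqkpieri}.
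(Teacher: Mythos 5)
Your plan — prove all three parts simultaneously by induction on $|\theta|$ via the recursive characterizations in \Proposition{lgkpieri}, \Proposition{lgopieri}, and \Definition{lgqkpieri} — is exactly what the paper does; the paper's proof is the single sentence ``These identities follow from the recursive definitions by induction on $|\theta|$.'' So the route is the same and the overall structure is sound.

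One place where the write-up is off in the details of part~(a), $\cH_q$ case: you assert that ``if $\psi$ is a row and $\wh\theta$ is a single row then $\theta$ is a single row.'' That implication is false as stated. If the north-east arm $\psi$ is a row \emph{connected} to a non-empty $\wh\theta$, then $\wh\theta$ cannot be a single row at all: the box of $\wh\theta$ adjacent to $\psi$ is necessarily the box immediately west of $\psi$'s leftmost box, and the maximality of $\psi$ then forces $\wh\theta$ to also contain the box directly below that one (otherwise the $(a+1)\times(a+1)$ square would give a longer line), so $\wh\theta$ occupies at least two rows. Thus the dangerous branch $\cH_q(\theta,p)=\cH_q(\wh\theta,p-a)$ with $\wh\theta$ a single row simply never arises. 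And in the \emph{disconnected} row case your formula is rule (iv$''$), which returns $\cC(\wh\theta,p-a)$, not $\cH_q(\wh\theta,p-a)$ — that is $0$ by the inductive hypothesis for $\cC$, again regardless of whether $\wh\theta$ is a single row. So the conclusion is correct, but the justification you gave for the one potentially non-vanishing sub-case is both wrong in its claim and unnecessary; what you actually need is the combinatorial observation that a connected row arm is incompatible with $\wh\theta$ being a single row. Apart from this, the base cases, part~(c), and the $N'$/$N'_q$ bookkeeping in part~(b) are the right computations to carry out, and your anticipation that the component count is the delicate point matches the way the argument goes.
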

\begin{proof}
  These identities follow from the recursive definitions by induction on
  $|\theta|$.
\end{proof}

\begin{lemma}\label{lemma:rec:nncc}%
  Let $\theta \subset \cP_X^\infty$ be a non-empty rim, such that the north-east arm
  $\psi = \theta\ssm\wh\theta$ is not a disconnected single box, and let $p <
  |\theta|$. Then,
  \[
    2 \cN_q(\theta,p) - \cN_q(\theta,p+1) \ = \
    \begin{cases}
      \cC(\theta,p) - \cC(\theta,p+1) & \text{if $\psi$ is a row,} \\
      \cC(\theta,p) & \text{if $\psi$ is a column.}
    \end{cases}
  \]
\end{lemma}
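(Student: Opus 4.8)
The plan is to prove the identity by induction on $|\theta|$, using the recursive characterizations of $\cN_q$ in \Definition{lgqkpieri} and of $\cC$ in \Proposition{lgkpieri}, together with the boundary values collected in \Lemma{rec:basic}. If $\theta$ is not a rim, both $\cN_q(\theta,p)$ and $\cC(\theta,p)$ vanish and there is nothing to prove, so assume $\theta$ is a non-empty rim with north-east arm $\psi = \theta\ssm\wh\theta$ of size $a$. By hypothesis $\psi$ is a genuine line, so it is either a row or a column with $a\geq 1$ (and when $a=1$ it is connected to $\wh\theta$, a disconnected single box being excluded). The cases $p \leq 0$ and $p > |\theta|$ are disposed of directly from \Lemma{rec:basic}, and the case $\wh\theta = \emptyset$ (so $\theta = \psi$) is an elementary check of rules (i$'$), (ii$'$) of \Definition{lgqkpieri} against (i), (ii) of \Proposition{lgkpieri}, where the assumption $p < |\theta|$ kills the $\delta_{p,|\theta|}$ terms. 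So assume $\wh\theta \neq \emptyset$.

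When $\psi$ is \emph{not} connected to $\wh\theta$ (hence $a \geq 2$), rule (iv$'$) of \Definition{lgqkpieri} expresses $\cN_q(\theta,p)$ entirely in terms of the values $f_k := \cC(\wh\theta,p-a+k)$, and likewise \Proposition{lgkpieri}(iv) expresses $\cC(\theta,p)$ in these same terms via its $a\geq 2$ formula, which does not distinguish rows from columns. Substituting and collecting coefficients, the claimed identity becomes the purely formal assertion that
\[
  2(f_0 - f_1) - (f_1 - f_2) = 2f_0 - 3f_1 + f_2
\]
when $\psi$ is a column, and
\[
  2(f_0 - 2f_1 + f_2) - (f_1 - 2f_2 + f_3) = (2f_0-3f_1+f_2)-(2f_1-3f_2+f_3)
\]
when $\psi$ is a row; both are immediate and require no appeal to the inductive hypothesis.

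When $\psi$ is connected to $\wh\theta$, rule (iii$'$) gives $\cN_q(\theta,p) = \cN_q(\wh\theta,p-a)$ if $\psi$ is a column and $\cN_q(\theta,p) = \cN_q(\wh\theta,p-a) - \cC(\wh\theta,p-a+1)$ if $\psi$ is a row, while \Proposition{lgkpieri}(iii) gives $\cC(\theta,p) = \cC(\wh\theta,p-a) - \cC(\wh\theta,p-a+1)$ in both cases. Substituting, the identity for $\theta$ reduces to the identity for $\wh\theta$ at parameter $p-a$, but with the two branches \emph{swapped}: the ``$\psi$ a column'' conclusion $2\cN_q(\theta,p)-\cN_q(\theta,p+1) = \cC(\theta,p)$ needs the ``row'' conclusion for $\wh\theta$, and the ``$\psi$ a row'' conclusion needs the ``column'' conclusion for $\wh\theta$. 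So the induction closes provided one establishes the geometric fact that, for a rim whose north-east arm is connected to $\wh\theta$, the north-east arm of $\wh\theta$ is a column when $\psi$ is a row and a row when $\psi$ is a column, and in particular is never a disconnected single box. This is proved by inspecting the staircase structure of $\theta$ near $Q$: the rim must turn at the far end of a maximal initial horizontal (resp.\ vertical) arm, and the no-box-strictly-south-and-east condition forbids the degenerate alternatives, so removing $\psi$ exposes an arm of $\wh\theta$ of the opposite orientation.

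The main obstacle is precisely this last geometric bookkeeping: one must verify that ``row'' and ``column'' are read off correctly after truncating the arm, that the truncated arm of $\wh\theta$ is never of the excluded ``disconnected single box'' type — which would make the swapped inductive hypothesis inapplicable — and that the degenerate cases $\wh\theta = \emptyset$, $a=1$, and $p$ near $0$ or $|\theta|$ are all consistent with the recursive rules. Everything else is routine manipulation of the binomial identity \eqn{hbinom} underlying \Lemma{rec:basic} and the definitions of $\cC$, $\cH_q$, and $\cN_q$.
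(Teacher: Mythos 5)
Your proof is correct and takes essentially the same route as the paper: induction on $|\theta|$, with the base case $\wh\theta=\emptyset$ checked directly against rules (i$'$)/(ii$'$) and (i)/(ii), the disconnected case verified by formal substitution into (iv$'$) and (iv), and the connected case reduced to $\wh\theta$ at parameter $p-a$ with the row and column roles swapped. The geometric fact you flag as the crux — that when $\psi$ is connected, the north-east arm of $\wh\theta$ has the opposite orientation and is in particular not a disconnected single box — is also relied on (without explicit proof) in the paper's argument.
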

\begin{proof}
  Assume that $\wh\theta = \emptyset$. If $\psi$ meets the SW diagonal or is a
  column, then both sides of the identity are equal to $-\delta_{p+1,|\theta|}$,
  and otherwise both sides are equal to $-3\,\delta_{p+1,|\theta|} +
  \delta_{p+2,|\theta|}$.

  Assume next that $\wh\theta \neq \emptyset$ and $\psi$ is connected to
  $\wh\theta$. Set $a = |\psi|$. If $\psi$ is a row, then
  \begin{multline*}
    2\, \cN_q(\theta,p) - \cN_q(\theta,p+1) - \cC(\theta,p) + \cC(\theta,p+1) \\
    = \ 2\, \cN_q(\wh\theta,p-a) - \cN_q(\wh\theta,p-a+1) - \cC(\wh\theta,p-a)
    \,,
  \end{multline*}
  which vanishes by induction on $|\theta|$, since the north-east arm of
  $\wh\theta$ is a column. If $\psi$ is a column, then
  \begin{multline*}
    2\, \cN_q(\theta,p) - \cN_q(\theta,p+1) - \cC(\theta,p) \\
    = \ 2\, \cN_q(\wh\theta,p-a) - \cN_q(\wh\theta,p-a+1) - \cC(\wh\theta,p-a)
    + \cC(\wh\theta,p-a+1)
  \end{multline*}
  which vanishes by induction on $|\theta|$, since the north-east arm of
  $\wh\theta$ is a row.

  Finally we assume that $\wh\theta \neq \emptyset$ and $\psi$ is not connected
  to $\wh\theta$. If $\psi$ is a column, then both sides are equal to
  \[
    2\, \cC(\wh\theta, p-a) - 3\, \cC(\wh\theta, p-a+1)
    + \cC(\wh\theta, p-a+2) \,,
  \]
  and if $\psi$ is a row, then both sides are equal to
  \[
    2\, \cC(\wh\theta, p-a) - 5\, \cC(\wh\theta, p-a+1)
    + 4\, \cC(\wh\theta, p-a+2) - \cC(\wh\theta, p-a+3) \,.
  \]
  The identity follows from this.
\end{proof}

\begin{lemma}\label{lemma:rec:hhcn}%
  Let $\theta \subset \cP_X^\infty$ be a non-empty rim and let $p < |\theta|$.
  Then,
  \[
    \cH_q(\theta,p) - \cH_q(\theta,p+1) \ = \ \cC(\theta,p) - \cN_q(\theta,p)
    \,.
  \]
\end{lemma}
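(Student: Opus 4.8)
The plan is to prove the identity by induction on $|\theta|$, mirroring the case structure of the recursive descriptions of $\cC$, $\cH_q$ and $\cN_q$ in \Proposition{lgkpieri}, \Proposition{lgopieri} and \Definition{lgqkpieri}. If $\theta$ is not a rim then $\cC(\theta,p)=\cH_q(\theta,p)=\cN_q(\theta,p)=0$ and the identity is trivial, so I may assume $\theta$ is a non-empty rim with north-east arm $\psi=\theta\ssm\wh\theta$ of size $a$. The base case $\wh\theta=\emptyset$ (so $\theta=\psi$ is a single row or column, meeting the SW diagonal or not) is checked directly from the closed formulas in parts (i)/(ii) of \Proposition{lgkpieri}, part (i$''$) of \Proposition{lgopieri} and parts (i$'$)/(ii$'$) of \Definition{lgqkpieri}: in every sub-case both sides reduce to an explicit combination of Kronecker deltas, and the hypothesis $p<|\theta|$ is what kills the leading term $\delta_{p,|\theta|}$ on each side.

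For the inductive step I would split according to whether $\psi$ is connected to $\wh\theta$. When $\psi$ is disconnected, parts (iv), (iv$''$), (iv$'$) express each of $\cC(\theta,\cdot)$, $\cH_q(\theta,\cdot)$, $\cN_q(\theta,\cdot)$ as a fixed integer combination of $\cC(\wh\theta,p-a)$, $\cC(\wh\theta,p-a+1)$, $\cC(\wh\theta,p-a+2)$ (the combination depending only on whether $\psi$ is a row, a column, or a disconnected single box), so in each of these cases the identity becomes a trivial polynomial identity in those three symbols and no induction is needed. When $\psi$ is connected and a row, part (iii$''$) gives $\cH_q(\theta,p)=\cH_q(\wh\theta,p-a)$ for all $p$, while parts (iii) and (iii$'$) give $\cC(\theta,p)=\cC(\wh\theta,p-a)-\cC(\wh\theta,p-a+1)$ and $\cN_q(\theta,p)=\cN_q(\wh\theta,p-a)-\cC(\wh\theta,p-a+1)$; substituting, the identity for $\theta$ at $p$ collapses exactly to the inductive hypothesis for $\wh\theta$ at $p-a$.

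The remaining and most delicate case is $\psi$ connected and a column (so $a\ge 2$). Here part (iii$''$) gives $\cH_q(\theta,p)=\cC(\wh\theta,p-a)-\cH_q(\wh\theta,p-a)+\cH_q(\wh\theta,p-a+1)$ for $p<|\theta|$, but $\cH_q(\theta,p+1)$ is given by the same formula only when $p+1<|\theta|$ and by $\cH_q(\theta,|\theta|)=2^{N'_q(\theta)}=\cH_q(\wh\theta,|\wh\theta|)$ at the boundary $p+1=|\theta|$, using \Lemma{rec:basic}(b) together with the geometric fact $N'_q(\theta)=N'_q(\wh\theta)$. Feeding these into the target identity and applying the inductive hypothesis for $\wh\theta$ at $p-a$ and, when $p+1<|\theta|$, also at $p-a+1$, the identity reduces to $2\cN_q(\wh\theta,p-a)-\cN_q(\wh\theta,p-a+1)=\cC(\wh\theta,p-a)-\cC(\wh\theta,p-a+1)$, which is precisely \Lemma{rec:nncc} applied to $\wh\theta$ — legitimate because, as recorded in the proof of \Lemma{rec:nncc}, the north-east arm of $\wh\theta$ is a row in this configuration; at the boundary value the reduction instead combines \Lemma{rec:nncc} for $\wh\theta$ with \Lemma{rec:basic}(b) once more. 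The main obstacle I anticipate is exactly this case: keeping the boundary $p+1=|\theta|$ straight, allowing $p\le 0$ (which is essential for the recursion to close), and confirming that the invoked instances of \Lemma{rec:nncc} and \Lemma{rec:basic} genuinely apply to $\wh\theta$ — in particular that the degenerate configurations in which the north-east arm of $\wh\theta$ is a disconnected single box do not arise, or else are absorbed by a separate direct computation from parts (iv)/(iv$'$)/(iv$''$) with $a=1$. The binomial identity \eqn{hbinom} is available should any residual $h$-value manipulation be required, but the bulk of the argument is bookkeeping with Kronecker deltas and the two auxiliary lemmas.
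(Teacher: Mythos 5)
Your proof follows essentially the same route as the paper's: the same induction on $|\theta|$ with the same case split on the type and connectivity of the north-east arm, the same trivial collapse in the disconnected and connected-row cases, and the same reduction of the connected-column case to \Lemma{rec:nncc} via the inductive hypothesis, with the boundary $p+1=|\theta|$ handled by \Lemma{rec:basic}(b). One small correction: a connected column arm can have size $a=1$ (e.g.\ $\theta=\{(1,2),(2,1),(2,2)\}$ with $\psi=\{(1,2)\}$), contrary to your parenthetical ``so $a\ge 2$'', but this does not affect the argument since the north-east arm of $\wh\theta$ is still a connected row in that configuration and \Lemma{rec:nncc} applies as before.
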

\begin{proof}
  Let $\psi = \theta \ssm \wh\theta$ be the north-east arm of $\theta$ and set
  $a = |\psi|$. Assume first that $\wh\theta = \emptyset$. If $\psi$ is a row,
  then both sides of the identity are zero, and otherwise both sides are equal
  to $-\delta_{p+1,|\theta|}$.

  Assume next that $\wh\theta \neq \emptyset$ and $\psi$ is connected to
  $\wh\theta$. If $\psi$ is a row, then it follows by induction on $|\theta|$
  that
  \begin{multline*}
    \cH_q(\theta,p) - \cH_q(\theta,p+1)
    = \cH_q(\wh\theta,p-a) - \cH_q(\wh\theta,p-a+1) \\
    = \cC(\wh\theta,p-a) - \cN_q(\wh\theta,p-a)
    = \cC(\theta,p) - \cN_q(\theta,p) \,.
  \end{multline*}
  If $\psi$ is a column and $p \leq |\theta|-2$, then the recursive definitions
  and induction on $|\theta|$ yield
  \[
    \begin{split}
      & \cH_q(\theta,p) - \cH_q(\theta,p+1) - \cC(\theta,p) + \cN_q(\theta,p) \\
      & \ \ = \ -\cH_q(\wh\theta,p-a)
      + 2\,\cH_q(\wh\theta,p-a+1) - \cH_q(\wh\theta,p-a+2)
      + \cN_q(\wh\theta,p-a) \\
      & \ \ = \ 2\,\cN_q(\wh\theta,p-a) - \cN_q(\wh\theta,p-a+1)
      - \cC(\wh\theta,p-a) + \cC(\wh\theta,p-a+1) \,.
    \end{split}
  \]
  This expression is equal to zero by \Lemma{rec:nncc}, as the north-east arm of
  $\wh\theta$ is a row. If $\psi$ is a column and $p = |\theta|-1$, then
  the recursive definitions and induction on $|\theta|$ gives
  \[
    \begin{split}
      & \cH_q(\theta,p) - \cH_q(\theta,p+1) - \cC(\theta,p) + \cN_q(\theta,p) \\
      & \ \ = \
      \cN_q(\wh\theta, p-a) - \cH_q(\wh\theta, p-a) + \cC(\wh\theta, p-a+1) \\
      & \ \ = \
      2\, \cN_q(\wh\theta, p-a) - \cH_q(\wh\theta, p-a+1) -
      \cC(\wh\theta, p-a) + \cC(\wh\theta, p-a+1) \,.
    \end{split}
  \]
  This expression is equal to zero by \Lemma{rec:basic}(b) and \Lemma{rec:nncc},
  as $p-a+1 = |\wh\theta|$ and the north-east arm of $\wh\theta$ is a row.

  Finally assume that $\wh\theta \neq \emptyset$ and $\psi$ is not connected to
  $\wh\theta$. If $\psi$ is a row, then both sides of the identity are equal to
  $\cC(\wh\theta, p-a) - \cC(\wh\theta, p-a+1)$, and otherwise both sides are
  equal to $\cC(\wh\theta, p-a) - 2\,\cC(\wh\theta, p-a+1) + \cC(\wh\theta,
  p-a+2)$. This proves the identity.
\end{proof}

\begin{prop}\label{prop:rec:odot}%
  Let $\theta \subset \cP_X^\infty$ be a non-empty skew shape with north-east
  arm $\psi = \theta\ssm\wh\theta$, and let $p \in \Z$. Then,
  \[
    \cH_q(\theta,p) - \cN_q(\theta,p) = \begin{cases}
      \sum_{\wh\theta \subset \varphi \subsetneq \theta} \cC(\varphi,p)
      & \text{if $\psi$ is a row,} \\
      0 & \text{otherwise,}
    \end{cases}
  \]
  where the sum is over all proper lower order ideals $\varphi$ of $\theta$ that
  contain $\wh\theta$.
\end{prop}
\begin{proof}
  We may assume that $\theta$ is a rim, since otherwise $\wh\theta$ is also not
  a rim, and both sides of the identity vanish. Set $a = |\psi|$. Assume first
  that $\psi$ is not a row. If $\wh\theta = \emptyset$, then $\cH_q(\theta,p) =
  \delta_{p,|\theta|} = \cN_q(\theta,p)$. If $\wh\theta \neq \emptyset$ and
  $\psi$ is connected to $\wh\theta$, then $\cH_q(\theta,p) = \cN_q(\theta,p)$
  for $p \geq |\theta|$ by \Lemma{rec:basic}(b,c), and for $p < |\theta|$ we
  have
  \[
    \cH_q(\theta,p) - \cN_q(\theta,p) =
    \cC(\wh\theta,p-a) - \cH_q(\wh\theta,p-a) + \cH_q(\wh\theta,p-a+1)
    - \cN_q(\wh\theta,p-a) \,,
  \]
  which is equal to zero by \Lemma{rec:hhcn}. Finally, if $\wh\theta \neq
  \emptyset$ and $\psi$ is not connected to $\wh\theta$, then $\cH_q(\theta,p) =
  \cC(\wh\theta,p-a) - \cC(\wh\theta,p-a+1) = \cN_q(\theta,p)$.

  Assume that $\psi$ is a row. For $0 \leq i \leq a-1$, we let $\varphi_i$ be
  the union of $\wh\theta$ with the leftmost $i$ boxes of $\psi$. Then
  $\varphi_0, \varphi_1, \dots, \varphi_{a-1}$ are the proper lower order ideals
  $\varphi$ in $\theta$ that contain $\wh\theta$. If $\wh\theta =
  \emptyset$ and $\psi$ meets the SW diagonal, then
  \[
    \cH_q(\theta,p) - \cN_q(\theta,p)
    = \chi(p \leq |\theta|) - \delta_{p,|\theta|}
    = \chi(p \leq 0) + \sum_{i=1}^{a-1} \delta_{p,i}
    = \sum_{i=0}^{a-1} \cC(\varphi_i, p) \,.
  \]
  If $\theta$ is a single box not on the SW diagonal, then $\cH_q(\theta,p) -
  \cN_q(\theta,p) = \chi(p \leq 0) = \cC(\varphi_0, p)$. If $\wh\theta =
  \emptyset$, $|\theta| \geq 2$, and $\psi$ does not meet the SW diagonal, then
  \begin{multline*}
    \cH_q(\theta,p) - \cN_q(\theta,p)
    \ = \ \chi(p < |\theta|) + \delta_{p,a-1} \\
    = \ \chi(p \leq 0) + 2\,\delta_{p,1} +
    \sum_{i=2}^{a-1} (2\,\delta_{p,i} - \delta_{p,i-1})
    \ = \ \sum_{i=0}^{a-1} \cC(\varphi_i, p) \,.
  \end{multline*}
  If $\wh\theta \neq \emptyset$ and $\psi$ is connected to $\wh\theta$, then
  since the north-east arm of $\wh\theta$ is not a row, we obtain by induction
  on $|\theta|$ that
  \begin{multline*}
    \cH_q(\theta,p) - \cN_q(\theta,p)
    \,=\, \cH_q(\wh\theta,p-a) - \cN_q(\wh\theta,p-a) + \cC(\wh\theta,p-a+1) \\
    =\, \cC(\wh\theta, p-a+1)
    \,=\, \cC(\wh\theta,p) +
    \sum_{i=1}^{a-1} (\cC(\wh\theta,p-i) - \cC(\wh\theta,p-i+1))
    \,=\, \sum_{i=0}^{a-1} \cC(\varphi_i, p) \,.
  \end{multline*}
  If $\wh\theta \neq \emptyset$ and $\psi$ is a single box that is not connected
  to $\wh\theta$, then $\cH_q(\theta,p) - \cN_q(\theta,p) = \cC(\wh\theta,p)$
  follows from the definitions. Finally, if $\wh\theta \neq \emptyset$, $\psi$
  is not connected to $\wh\theta$, and $a \geq 2$, we obtain
  \[
    \begin{split}
      & \cH_q(\theta,p) - \cN_q(\theta,p) \ = \
      2\,\cC(\wh\theta,p-a+1) - \cC(\wh\theta,p-a+2) \\
      & \ \ = \
      \cC(\wh\theta,p) +
      \left(2\,\cC(\wh\theta,p-1) - 2\,\cC(\wh\theta,p)\right) \\
      & \ \ \ \ \ \ \ \ \
      + \sum_{i=2}^{a-1} \left(
      2\,\cC(\wh\theta,p-i) - 3\,\cC(\wh\theta,p-i+1) + \cC(\wh\theta,p-i+2)
      \right) \\
      & \ \ = \
      \sum_{i=0}^{a-1} \cC(\varphi_i,p) \,.
    \end{split}
  \]
  The identity follows from these observations.
\end{proof}

We finally prove that the Pieri structure constants $\wh\cN(\theta,p)$ of
$\QK(X)$ are signed counts of QKLG-tableaux.

\begin{cor}\label{cor:lgqkpieri.proof}%
  Let $\theta \subset \wh\cP_X$ be a skew shape and $1 \leq p \leq n$. Then
  $\wh\cN(\theta,p) = \cN(\theta,p)$.
\end{cor}
\begin{proof}
  If $\theta$ is disjoint from the NE diagonal of $\wh\cP_X$, then
  $\wh\cN(\theta,p) = \cC(\theta,p) = \cN(\theta,p)$ by
  \cite{buch.ravikumar:pieri}. If $\theta$ contains two or more boxes from the
  NE diagonal, then $\cN(\theta,p) = 0$ by definition (since $\theta$ is not a
  rim), and since $\dmax(p) = 1$, it follows from
  \cite[Thm.~8.3]{buch.chaput.ea:positivity} that $\wh\cN(\theta,p) = 0$. Assume
  that $\theta$ contains exactly one box from the NE diagonal of $\wh\cP_X$.
  Then $\theta^-$ equals $\wh\theta$ if the north-east arm of $\theta$ is a row,
  and $\theta^- = \theta$ otherwise. \Lemma{qklg-count} shows that
  $\cN(\theta,p) = \cN_q(\theta,p)$, and \Proposition{rec:odot} and the
  definition \eqn{Nhat} show that $\cN_q(\theta,p) = \wh\cN(\theta,p)$, noting
  that the condition $\theta^- \subset \varphi \subsetneq \theta$ implies that
  $\cH(\varphi,p) = \cC(\varphi,p)$ by \Remark{HCHq}.
\end{proof}


\section*{Index of symbols}

We list the most important symbols used in this paper in approximate order of
appearance after the introduction. In electronic versions of this paper, each
symbol is clickable and linked to its definition.\smallskip

\indexsec{flagvar}{%
\slink{group}{$G$}%
\slink{group}{$B$}%
\slink{group}{$T$}%
\slink{group}{$B^-$}%
\slink{group}{$W$}%
\slink{group}{$\Phi$}%
\slink{group}{$\Phi^+$}%
\slink{group}{$\Delta$}%
\slink{flagvar}{$P_X$}%
\slink{schub}{$W_X$}%
\slink{schub}{$W^X$}%
\slink{schub}{$X_w$}%
\slink{schub}{$X^w$}%
\slink{factor}{$u^X$}%
\slink{factor}{$u_X$}%
\slink{factor}{$w_0^X$}%
\slink{factor}{$w_{0,X}$}%
\slink{factor}{$u^\vee$}%
}

\indexsec{ss:comin}{%
\slink{gamma}{$\ga$}%
\slink{comin}{$u \cap v$}%
\slink{comin}{$u \cup v$}%
\slink{comin}{$\cP_X$}%
\slink{comin}{$I(w)$}%
\slink{comin}{$\delta$}%
\slink{comin}{$\delta(\al)$}%
\slink{schubla}{$X_\la$}%
\slink{schubla}{$X^\la$}%
}

\indexsec{curve-nbhd}{%
\slink{nbhd}{$M_d$}%
\slink{nbhd}{$\Mb_{0,3}(X,d)$}%
\slink{nbhd}{$\ev_i$}%
\slink{nbhd}{$\Gamma_d(X_u,X^v)$}%
\slink{nbhd}{$\Gamma_d(X_u)$}%
\slink{nbhd}{$u(d)$}%
\slink{nbhd}{$v(-d)$}%
\slink{nbhd}{$z_d$}%
\slink{dist}{$\dist(x,y)$}%
\slink{dist}{$\Gamma_d(x,y)$}%
\slink{dist}{$Y_d$}%
\slink{dist}{$Z_d$}%
\slink{dist}{$p_d$}%
\slink{dist}{$q_d$}%
\slink{dist}{$Y_d(X_u,X^v)$}%
\slink{dist}{$Z_d(X_u,X^v)$}%
\slink{dist}{$\ka_d$}%
\slink{dist}{$w_{0,Y_d}^{Z_d}$}%
}

\indexsec{qcohom}{%
\slink{qh}{$\QH(X)$}%
\slink{qh}{$[X_u] \star [X^v]$}%
\slink{qh}{$\QH(X)_q$}%
\slink{qh}{$\cB$}%
}

\indexsec{qktheory}{%
\slink{ktheory}{$K(X)$}%
\slink{ktheory}{$\cO_u$}%
\slink{ktheory}{$\cO^u$}%
\slink{ktheory}{$\cO_\la$}%
\slink{ktheory}{$\cO^\la$}%
\slink{qk}{$\QK(X)$}%
\slink{qk}{$\cO_u \odot \cO^v$}%
\slink{qk}{$\cO_u \star \cO^v$}%
\slink{qk}{$\QK(X)_q$}%
\slink{qk}{$\cB'$}%
}

\indexsec{seidel}{%
\slink{dminmax}{$\dmin(u,v)$}%
\slink{dminmax}{$\dmax(u,v)$}%
\slink{dminmax}{$\dmax(u)$}%
\slink{Wcomin}{$W^\comin$}%
\slink{qunits}{$\QH(X)_q^\times$}%
\slink{qunits}{$\QK(X)_q^\times$}%
\sref{example:qhquadric}{$\pP$}%
\sref{remark:seidel_qh}{$\om_\be^\vee$}%
}

\indexsec{qposet}{%
\slink{qbruhat}{$q^e[X^v] \leq q^d[X^u]$}%
\slink{qposet}{$\wh\cP_X$}%
\slink{qposet}{$I(q^d[X^u])$}%
\slink{qposet}{$\cO^\la$}%
\slink{embed}{$\xi(\al)$}%
\slink{embed}{$\tau(\al)$}%
\slink{embed}{$\tau(\cP_X)$}%
\slink{shift}{$\sigma\star\la$}%
\slink{shift}{$\la[d]$}%
}

\indexsec{gr:poset}{%
\slink{grass}{$\Gr(m,n)$}%
\slink{rectpart}{$(b)^a$}%
\slink{cylinder}{$(i',j') \leq (i,j)$}%
\slink{cylinder}{$\Z^2/\Z(m,m-n)$}%
}

\indexsec{gr:pieri}{%
\slink{binom}{$r(\theta)$}%
\slink{binom}{$\cA(\theta,p)$}%
}

\indexsec{og:poset}{%
\slink{og}{$\OG(n,2n)$}%
\slink{og:poset}{$\wb\cP_X$}%
}

\indexsec{og:pieri}{%
\sref{defn:kog}{$\cB(\theta,p)$}%
}

\indexsec{lg:poset}{%
\slink{lg}{$\LG(n,2n)$}%
\slink{lg:poset}{$\wb\cP_X$}%
}

\indexsec{lg:pieri}{%
\sref{defn:klgtab}{$\cC(\theta,p)$}%
\sref{defn:qklgtab}{$\cN(\theta,p)$}%
}

\indexsec{sgrass}{%
\slink{sg}{$E$}%
\slink{sg}{$(e_i,e_j)$}%
\slink{sg}{$\SG(m,2n)$}%
\slink{sg:schub}{$[a,b]$}%
\slink{sg:schub}{$E_\sP$}%
\slink{sg:schub}{$Y_\sP$}%
\slink{sg:schub}{$Y^\sP$}%
\slink{sg:bruhat}{$\sQ \leq \sP$}%
\slink{sg:length}{$\ell(\sP)$}%
\slink{sg:length}{$\sP^\vee$}%
}

\indexsec{sg:rich}{%
\slink{sgrich}{$Y_\sP^\sQ$}%
\slink{sflag}{$Z_{\sP',\sP}$}%
\slink{sflag}{$Z^{\sQ',\sQ}$}%
\slink{sflag}{$Z_{\sP',\sP}^{\sQ',\sQ}$}%
\slink{sp:weyl}{$s_i$}%
\slink{w-hat}{$\wh w$}%
\slink{w-prime}{$w'$}%
\slink{projrich}{$\Pi_\sigma^\tau(M)$}%
\slink{projrich}{$\leq_M$}%
\slink{projrich}{$\leq_L$}%
\slink{minbruhat}{$\min(\tau,\tau s_i)$}%
}

\indexsec{matspace}{%
\slink{MPQ}{$M_\sP^\sQ$}%
\slink{openrich}{$\oY_\sP^\sQ$}%
}

\indexsec{movable}{%
\slink{constraints}{$A[k]$}%
}

\indexsec{ss:incidence}{%
\slink{perp}{$S$}%
\slink{perp}{$f$}%
\slink{perp}{$g$}%
\slink{perp}{$S_\sP^\sQ$}%
\slink{cut}{$(a,b)$}%
\slink{complint}{$\bP_\sP^\sQ$}%
\slink{M-hat}{$\wh M_\sP^\sQ$}%
\slink{S-prime}{$S'$}%
\slink{S-prime}{$S''$}%
}

\indexsec{incprojrich}{%
\slink{ZSF}{$Y$}%
\slink{ZSF}{$Z$}%
\slink{ZSF}{$p$}%
\slink{ZSF}{$q$}%
\slink{ZSF}{$\wh X$}%
\slink{ZSF}{$\eta$}%
\slink{ZSF}{$\pi$}%
}

\indexsec{ss:gwpieri}{%
\slink{qskew}{$R(\la[d]/\mu)$}%
\slink{qskew}{$N(\la[d]/\mu)$}%
\slink{hfunc}{$h(a,b)$}%
}

\indexsec{multspec}{%
\slink{theta-circ}{$\theta^\circ$}%
\slink{cH-func}{$\cH(\theta,p)$}%
\slink{theta-minus}{$\theta^-$}%
\slink{cN-hat}{$\wh\cN(\theta,p)$}%
}

\indexsec{combin}{%
\slink{cPXinfty}{$\cP_X^\infty$}%
\slink{cPXinfty}{$N'(\theta)$}%
\slink{cPXinfty}{$\theta'$}%
\slink{arm}{$\psi$}%
\slink{arm}{$\wh\theta$}%
\slink{arm}{$\chi$}%
\slink{cHq-func}{$N'_q(\theta)$}%
\slink{cHq-func}{$\theta'_q$}%
\slink{cHq-func}{$\cH_q(\theta,p)$}%
\sref{defn:lgqkpieri}{$\cN_q(\theta,p)$}%
}

\ifdefined\qkpieribib
\bibliography{\qkpieribib}

\begin{thebibliography}{BCMP18b}

\bibitem[AMS24]{abouzaid.mclean.ea:gromov-witten}
M.~Abouzaid, M.~McLean, and I.~Smith.
\newblock Gromov-{W}itten invariants in complex and {M}orava-local
  {$K$}-theories.
\newblock {\em Geom. Funct. Anal.}, 34(6):1647--1733, 2024.

\bibitem[BC12]{billey.coskun:singularities}
S.~Billey and I.~Coskun.
\newblock Singularities of generalized {R}ichardson varieties.
\newblock {\em Comm. Algebra}, 40(4):1466--1495, 2012.

\bibitem[BCFF99]{bertram.ciocan-fontanine.ea:quantum}
A.~Bertram, I.~Ciocan-Fontanine, and W.~Fulton.
\newblock Quantum multiplication of {S}chur polynomials.
\newblock {\em J. Algebra}, 219(2):728--746, 1999.

\bibitem[BCMP13]{buch.chaput.ea:finiteness}
A.~S. Buch, P.-E. Chaput, L.~C. Mihalcea, and N.~Perrin.
\newblock Finiteness of cominuscule quantum {$K$}-theory.
\newblock {\em Ann. Sci. \'Ec. Norm. Sup\'er. (4)}, 46(3):477--494, 2013.

\bibitem[BCMP18a]{buch.chaput.ea:chevalley}
A.~S. Buch, P.-E. Chaput, L.~C. Mihalcea, and N.~Perrin.
\newblock A {C}hevalley formula for the equivariant quantum {$K$}-theory of
  cominuscule varieties.
\newblock {\em Algebr. Geom.}, 5(5):568--595, 2018.

\bibitem[BCMP18b]{buch.chaput.ea:projected}
A.~S. Buch, P.-E. Chaput, L.~C. Mihalcea, and N.~Perrin.
\newblock Projected {G}romov-{W}itten varieties in cominuscule spaces.
\newblock {\em Proc. Amer. Math. Soc.}, 146(9):3647--3660, 2018.

\bibitem[BCMP22]{buch.chaput.ea:positivity}
A.~S. Buch, P.-E. Chaput, L.~C. Mihalcea, and N.~Perrin.
\newblock Positivity of minuscule quantum {$K$}-theory.
\newblock arXiv:2205.08630, 2022.

\bibitem[BCP25]{buch.chaput.ea:equivariant}
A.~S. Buch, P.-E. Chaput, and N.~Perrin.
\newblock Equivariant rigidity of {R}ichardson varieties.
\newblock {\em Pacific J. Math.}, 338(2):209--229, 2025.

\bibitem[Bel04]{belkale:transformation}
P.~Belkale.
\newblock Transformation formulas in quantum cohomology.
\newblock {\em Compos. Math.}, 140(3):778--792, 2004.

\bibitem[Ber97]{bertram:quantum}
A.~Bertram.
\newblock Quantum {S}chubert calculus.
\newblock {\em Adv. Math.}, 128(2):289--305, 1997.

\bibitem[BKT03]{buch.kresch.ea:gromov-witten}
A.~S. Buch, A.~Kresch, and H.~Tamvakis.
\newblock Gromov-{W}itten invariants on {G}rassmannians.
\newblock {\em J. Amer. Math. Soc.}, 16(4):901--915, 2003.

\bibitem[BKT09]{buch.kresch.ea:quantum}
A.~S. Buch, A.~Kresch, and H.~Tamvakis.
\newblock Quantum {P}ieri rules for isotropic {G}rassmannians.
\newblock {\em Invent. Math.}, 178(2):345--405, 2009.

\bibitem[BKT15]{buch.kresch.ea:giambelli}
A.~S. Buch, A.~Kresch, and H.~Tamvakis.
\newblock A {G}iambelli formula for even orthogonal {G}rassmannians.
\newblock {\em J. Reine Angew. Math.}, 708:17--48, 2015.

\bibitem[BM11]{buch.mihalcea:quantum}
A.~S. Buch and L.~C. Mihalcea.
\newblock Quantum {$K$}-theory of {G}rassmannians.
\newblock {\em Duke Math. J.}, 156(3):501--538, 2011.

\bibitem[BM15]{buch.mihalcea:curve}
A.~S. Buch and L.~C. Mihalcea.
\newblock Curve neighborhoods of {S}chubert varieties.
\newblock {\em J. Differential Geom.}, 99(2):255--283, 2015.

\bibitem[BR12]{buch.ravikumar:pieri}
A.~S. Buch and V.~Ravikumar.
\newblock Pieri rules for the {$K$}-theory of cominuscule {G}rassmannians.
\newblock {\em J. Reine Angew. Math.}, 668:109--132, 2012.

\bibitem[Bri02]{brion:positivity}
M.~Brion.
\newblock Positivity in the {G}rothendieck group of complex flag varieties.
\newblock volume 258, pages 137--159. 2002.
\newblock Special issue in celebration of Claudio Procesi's 60th birthday.

\bibitem[BS16]{buch.samuel:k-theory}
A.~S. Buch and M.~J. Samuel.
\newblock {$K$}-theory of minuscule varieties.
\newblock {\em J. Reine Angew. Math.}, 719:133--171, 2016.

\bibitem[Buc03]{buch:quantum}
A.~S. Buch.
\newblock Quantum cohomology of {G}rassmannians.
\newblock {\em Compositio Math.}, 137(2):227--235, 2003.

\bibitem[BW21]{buch.wang:positivity}
A.~S. Buch and C.~Wang.
\newblock Positivity determines the quantum cohomology of {G}rassmannians.
\newblock {\em Algebra Number Theory}, 15(6):1505--1521, 2021.

\bibitem[CMP08]{chaput.manivel.ea:quantum*1}
P.-E. Chaput, L.~Manivel, and N.~Perrin.
\newblock Quantum cohomology of minuscule homogeneous spaces.
\newblock {\em Transform. Groups}, 13(1):47--89, 2008.

\bibitem[CMP09]{chaput.manivel.ea:affine}
P.-E. Chaput, L.~Manivel, and N.~Perrin.
\newblock Affine symmetries of the equivariant quantum cohomology ring of
  rational homogeneous spaces.
\newblock {\em Math. Res. Lett.}, 16(1):7--21, 2009.

\bibitem[CP11]{chaput.perrin:rationality}
P.-E. Chaput and N.~Perrin.
\newblock Rationality of some {G}romov-{W}itten varieties and application to
  quantum {$K$}-theory.
\newblock {\em Commun. Contemp. Math.}, 13(1):67--90, 2011.

\bibitem[FP97]{fulton.pandharipande:notes}
W.~Fulton and R.~Pandharipande.
\newblock Notes on stable maps and quantum cohomology.
\newblock In {\em Algebraic geometry---{S}anta {C}ruz 1995}, volume~62 of {\em
  Proc. Sympos. Pure Math.}, pages 45--96. Amer. Math. Soc., Providence, RI,
  1997.

\bibitem[FW04]{fulton.woodward:quantum}
W.~Fulton and C.~Woodward.
\newblock On the quantum product of {S}chubert classes.
\newblock {\em J. Algebraic Geom.}, 13(4):641--661, 2004.

\bibitem[Gre13]{green:combinatorics*1}
R.~M. Green.
\newblock {\em Combinatorics of minuscule representations}, volume 199 of {\em
  Cambridge Tracts in Mathematics}.
\newblock Cambridge University Press, Cambridge, 2013.

\bibitem[Hag04]{hagiwara:minuscule*2}
M.~Hagiwara.
\newblock Minuscule heaps over {D}ynkin diagrams of type {$\tilde A$}.
\newblock {\em Electron. J. Combin.}, 11(1):Research Paper 3, 20, 2004.

\bibitem[KLS14]{knutson.lam.ea:projections}
A.~Knutson, T.~Lam, and D.~Speyer.
\newblock Projections of {R}ichardson varieties.
\newblock {\em J. Reine Angew. Math.}, 687:133--157, 2014.

\bibitem[KT03]{kresch.tamvakis:quantum}
A.~Kresch and H.~Tamvakis.
\newblock Quantum cohomology of the {L}agrangian {G}rassmannian.
\newblock {\em J. Algebraic Geom.}, 12(4):777--810, 2003.

\bibitem[KT04]{kresch.tamvakis:quantum*1}
A.~Kresch and H.~Tamvakis.
\newblock Quantum cohomology of orthogonal {G}rassmannians.
\newblock {\em Compos. Math.}, 140(2):482--500, 2004.

\bibitem[Len00]{lenart:combinatorial}
C.~Lenart.
\newblock Combinatorial aspects of the {$K$}-theory of {G}rassmannians.
\newblock {\em Ann. Comb.}, 4(1):67--82, 2000.

\bibitem[LLSY25]{li.liu.ea:seidel}
C.~Li, Z.~Liu, J.~Song, and M.~Yang.
\newblock On {S}eidel representation in quantum {$K$}-theory of
  {G}rassmannians.
\newblock {\em Sci. China Math.}, 68(7):1523--1548, 2025.

\bibitem[Per07]{perrin:small*1}
N.~Perrin.
\newblock Small resolutions of minuscule {S}chubert varieties.
\newblock {\em Compos. Math.}, 143(5):1255--1312, 2007.

\bibitem[Pos05]{postnikov:affine}
A.~Postnikov.
\newblock Affine approach to quantum {S}chubert calculus.
\newblock {\em Duke Math. J.}, 128(3):473--509, 2005.

\bibitem[Pro84]{proctor:bruhat}
R.~A. Proctor.
\newblock Bruhat lattices, plane partition generating functions, and minuscule
  representations.
\newblock {\em European J. Combin.}, 5(4):331--350, 1984.

\bibitem[Rav13]{ravikumar:triple}
V.~Ravikumar.
\newblock {\em Triple intersection formulas for isotropic grassmannians}.
\newblock ProQuest LLC, Ann Arbor, MI, 2013.
\newblock Thesis (Ph.D.)--Rutgers The State University of New Jersey - New
  Brunswick.

\bibitem[Rav15]{ravikumar:triple*1}
V.~Ravikumar.
\newblock Triple intersection formulas for isotropic {G}rassmannians.
\newblock {\em Algebra Number Theory}, 9(3):681--723, 2015.

\bibitem[Ric92]{richardson:intersections}
R.~W. Richardson.
\newblock Intersections of double cosets in algebraic groups.
\newblock {\em Indag. Math. (N.S.)}, 3(1):69--77, 1992.

\bibitem[Sei97]{seidel:1}
P.~Seidel.
\newblock {$\pi_1$} of symplectic automorphism groups and invertibles in
  quantum homology rings.
\newblock {\em Geom. Funct. Anal.}, 7(6):1046--1095, 1997.

\bibitem[Tar23]{tarigradschi:curve*1}
M.~Tarigradschi.
\newblock Curve neighborhoods of {S}eidel products in quantum cohomology.
\newblock arXiv:2309.05985, 2023.

\bibitem[Tar25]{tarigradschi:curve}
M.~Tarigradschi.
\newblock {\em Curve {N}eighborhoods and {G}romov--{W}itten {I}nvariants of
  {P}ieri {T}ype}.
\newblock ProQuest LLC, Ann Arbor, MI, 2025.
\newblock Thesis (Ph.D.)--Rutgers The State University of New Jersey, School of
  Graduate Studies.

\bibitem[Wit95]{witten:verlinde}
E.~Witten.
\newblock The {V}erlinde algebra and the cohomology of the {G}rassmannian.
\newblock In {\em Geometry, topology, \& physics}, Conf. Proc. Lecture Notes
  Geom. Topology, IV, pages 357--422. Int. Press, Cambridge, MA, 1995.

\end{thebibliography}
\else

\fi
\bibliographystyle{halpha}

\end{document}